\renewcommand*{\baselinestretch}{1.25}
\newtheorem{theorem}{Theorem}[section]
\newtheorem{lemma}{Lemma}[section]
\newtheorem{proposition}{Proposition}[section]
\newtheorem{corollary}{Corollary}[section]
\theoremstyle{definition}
\newtheorem{definition}{Definition}[section]
\newtheorem*{rmk*}{Remark}
\newtheorem{rmk}{Remark}[section]
\renewcommand*\proofname{\upshape{\bfseries{Proof}}}
\DeclareMathOperator{\expectation}{E}
\numberwithin{equation}{section}
    \renewcommand*{\section}{\@startsection{section}{1}{\z@}%
    {6pt}{3pt}{\reset@font\normalsize\bfseries}}
    \renewcommand*{\subsection}{\@startsection{subsection}{2}{\z@}%
    {3pt}{3pt}{\reset@font\normalsize\mdseries\itshape}}
    \renewcommand*{\subsubsection}{\@startsection{subsubsection}{3}{\z@}%
    {3pt}{3pt}{\reset@font\normalsize\mdseries\itshape}}
\def\@seccntformat#1{\csname the#1\endcsname.\quad}
\def\@listi{\leftmargin\leftmargini
  \topsep=.5\baselineskip %0pt
  \partopsep=0pt \parsep=0pt \itemsep=0pt}
\let\@listI\@listi
\def\@listii{\leftmargin\leftmarginii
  \labelwidth\leftmarginii \advance\labelwidth-\labelsep
  \topsep=0pt \partopsep=0pt \parsep=0pt \itemsep=0pt}
\def\@listiii{\leftmargin\leftmarginiii
  \labelwidth\leftmarginiii \advance\labelwidth-\labelsep
  \topsep=0pt \partopsep=0pt \parsep=0pt \itemsep=0pt}
\def\@listiv{\leftmargin\leftmarginiv
  \labelwidth\leftmarginiv \advance\labelwidth-\labelsep
  \topsep=0pt \partopsep=0pt \parsep=0pt \itemsep=0pt}
\renewenvironment{proof}[1][\proofname]{\par
  \pushQED{\qed}%
  \normalfont \topsep6\p@\@plus6\p@\relax
  \trivlist
  \item[\hskip\labelsep
        \bfseries
    #1\@addpunct{.}]\ignorespaces
}{%
  \popQED\endtrivlist\@endpefalse
}
\newcommand{\labs}{\left|}
\newcommand{\rabs}{\right|}
\newcommand{\lpa}{\left(}
\newcommand{\rpa}{\right)}
\newcommand{\ve}{\varepsilon}
\newcommand{\ol}[1]{\overline{#1}}
\newcommand{\ul}[1]{\underline{#1}}
\newcommand{\wh}[1]{\widehat{#1}}
\newcommand{\wt}[1]{\widetilde{#1}}
\newcommand{\ex}[1]{\expectation\left[#1\right]}
\newcommand{\expe}[1]{\mathrm{E}[#1]}
\newcommand{\mf}[1]{\mathfrak{#1}}
\newcommand{\mcl}[1]{\mathcal{#1}}
\title{Notes on the dimension dependence in high-dimensional central limit theorems for hyperrectangles}
\author{Yuta Koike
\thanks{Mathematics and Informatics Center and Graduate School of Mathematical Sciences, The University of Tokyo, 3-8-1 Komaba, Meguro-ku, Tokyo 153-8914 Japan}
\thanks{CREST, Japan Science and Technology Agency}
%\thanks{The Institute of Statistical Mathematics, 10-3 Midori-cho, Tachikawa, Tokyo 190-8562, Japan}
}
\begin{document}

\maketitle

\begin{abstract}

%This paper develops 
Let $X_1,\dots,X_n$ be independent centered random vectors in $\mathbb{R}^d$. 
This paper shows that, even when $d$ may grow with $n$, the probability $P(n^{-1/2}\sum_{i=1}^nX_i\in A)$ can be approximated by its Gaussian analog uniformly in hyperrectangles $A$ in $\mathbb{R}^d$ as $n\to\infty$ under appropriate moment assumptions, as long as $(\log d)^5/n\to0$. 
This improves a result of Chernozhukov, Chetverikov \& Kato [\textit{Ann. Probab.} \textbf{45} (2017) 2309--2353] in terms of the dimension growth condition. When $n^{-1/2}\sum_{i=1}^nX_i$ has a common factor across the components, this condition can be further improved to $(\log d)^3/n\to0$. 
The corresponding bootstrap approximation results are also developed. 
These results serve as a theoretical foundation of simultaneous inference for high-dimensional models. 
%
%Statistical inference for high-dimensional sparse inverse covariance matrices has been actively studied in the recent literature. A major approach for this issue is de-biasing a regularized estimator for the inverse covariance matrix. That is, we first construct an estimator for the inverse covariance matrix using some regularization to overcome the high-dimensionality, then we de-bias the estimator to recover the asymptotic normality which was lost due to regularization. In this talk we develop such a de-biasing procedure for the graphical Lasso in high-frequency data modeled as a discretely observed semimartingale. This allows us to implement statistical inference for the inverse quadratic covariation matrix in a high-dimensional setting.
\vspace{3mm}

\noindent \textit{Keywords}: 
anti-concentration inequality; 
bootstrap; 
factor structure; 
%empirical process
maxima;
randomized Lindeberg method; 
Stein kernel

\end{abstract}

\section{Introduction}

Let $X=(X_i)_{i=1}^n$ be independent centered random vectors in $\mathbb{R}^d$ and consider the normalized sum:
\[
S_n^X=(S_{n,1}^X,\dots,S_{n,d}^X)^\top:=\frac{1}{\sqrt{n}}\sum_{i=1}^nX_i.
\]
We assume that each coordinate of $S_n^X$ has (at least) a finite second moment and write the covariance matrix as $\mf{C}_n^X:=\expe{S_n^X(S_n^X)^\top}$. The aim of this paper is to approximate $S_n^X$ by its Gaussian analog $Z_n^X$ in law, where $Z^X_n=(Z^X_{n,1},\dots,Z^X_{n,d})^\top$ denotes a $d$-dimensional centered Gaussian vector with covariance matrix $\mf{C}_n^X$. 
When $n$ tends to infinity while $d$ is fixed, such an approximation is commonly formulated as convergence in law. Then, it is merely a consequence of a classical multivariate central limit theorem (CLT) under mild regularity assumptions. Nevertheless, in a high-dimensional setting where $d$ grows as $n\to\infty$, the situation is not as simple as above. In such a setting, it is typical that $Z_n^X$ depends on $n$ and has no limit law as $n\to\infty$, so the standard formulation is no longer meaningful. One possible way to properly formulate the problem is to consider the convergence of some metric between the laws of $S_n^X$ and $Z_n^X$. A typical choice of such a metric is the following one:
\[
\rho_n(\mcl{A}):=\sup_{A\in\mcl{A}}\labs P\lpa S_n^X\in A\rpa-P\lpa Z_n^X\in A\rpa\rabs,
\]
where $\mcl{A}$ is a class of Borel sets in $\mathbb{R}^d$. In this regard, investigation of Lyapunov type bounds for $\rho_n(\mcl{A})$ with explicit dimension dependence has some history in the case that $\mcl{A}$ is the class of all convex Borel sets in $\mathbb{R}^d$, which we write $\mathcal{A}^{\mathrm{co}}$ in the following. 
In particular, under appropriate moment conditions, one can conclude $\rho_n(\mathcal{A}^{\mathrm{co}})\to0$ as $n\to\infty$ if $d^{7/2}/n\to0$ from \citet{Bentkus2005}'s result. 
Meanwhile, it has recently attracted much attention in the probabilistic literature to derive bounds for the Wasserstein distances of order $p\geq1$ between the laws of $S_n^X$ and $Z_n^X$ in high-dimensional settings; see \cite{Zhai2018,Bonis2019,CFP2017,EMZ2020,Fathi2018}, among others. 
As illustrated in \cite[Section 1.1]{Zhai2018}, such a bound can be used to improve the dimension dependence to obtain the convergence $\rho_n(\mathcal{A}^{\mathrm{co}})\to0$ under some situations. 
For example, when each $X_i$ is isotropic and satisfies a Poincar\'e inequality with constant $C$ independent of $n$, we can deduce $\rho_n(\mathcal{A}^{\mathrm{co}})\to0$ as $n\to\infty$ if $d^{3/2}/n\to0$ from \cite[Proposition 1.4]{Zhai2018} and \cite[Theorem 4.1]{CFP2017}. 

% CCK theory
As outlined above, one typically requires sub-linear dependence of $d$ on $n$ to get $\rho_n(\mathcal{A}^{\mathrm{co}})\to0$ or the convergences of the Wasserstein distances. In fact, one can easily verify that this is usually \textit{necessary} for getting (at least) the latter convergences. %(cf.~\cite[Remark 3.3]{Koike2019dejong}). 
Nevertheless, in modern data science, one is often interested in a situation where $d$ is (much) larger than $n$. Recently, the path-breaking work of \citet*{CCK2013} has shown that, if we restrict our attention to the class $\mcl{A}=\mcl{A}^m$ of sets of the form $A=\{x\in\mathbb{R}^d:\max_{j\in J}x_j\leq a\}$ for some $a\in\mathbb{R}$ and $J\subset\{1,\dots,d\}$ ($x_j$ denotes the $j$-th coordinate of $x$), we can deduce $\rho_n(\mathcal{A}^m)\to0$ as $n\to\infty$ under appropriate moment conditions even if $d$ is as large as $e^{Cn^c}$ for some $c,C>0$. 
This type of convergence is indeed enough for many statistical applications in high-dimensional inference such as construction of simultaneous confidence intervals and strong control of the family-wise error rate (FWER) in multiple testing; see \cite{BCCHK2018} for details.  
This result has further been extended in \citet{CCK2017} to the case that $\mcl{A}=\mcl{A}^{\mathrm{re}}$ is the class of all hyperrectangles in $\mathbb{R}^d$: $\mathcal{A}^{\mathrm{re}}$ consists of all sets $A$ of the form
\[
A=\{x\in\mathbb{R}^d:a_j\leq x_j\leq b_j\text{ for all }j=1,\dots,d\}
\]
for some $-\infty\leq a_j\leq b_j\leq\infty$, $j=1,\dots,d$. In particular, under suitable moment conditions, they have obtained
\begin{equation}\label{eq:cck2017}
\rho_n(\mathcal{A}^{\mathrm{re}})\leq C\lpa\frac{\log^7(dn)}{n}\rpa^{1/6},
\end{equation}
where $C>0$ is a constant independent of $n$; see Proposition 2.1 in \cite{CCK2017}. 
Indeed, they have also shown that inequality \eqref{eq:cck2017} continues to hold true with replacing $\mathcal{A}^{\mathrm{re}}$ by a class of \textit{simple convex sets} or \textit{sparsely convex sets} under appropriate assumptions; see Section 3 in \cite{CCK2017} for details.  

% aim
From \eqref{eq:cck2017}, we infer $\rho_n(\mathcal{A}^{\mathrm{re}})\to0$ as $n\to\infty$ if $(\log d)^7/n\to0$. 
Although this condition is much weaker than the ones imposed to obtain the convergence of $\rho_n(\mathcal{A}^{\mathrm{co}})$ or the Wasserstein distances, it is still unclear whether this condition is necessary to get the convergence $\rho_n(\mathcal{A}^{\mathrm{re}})\to0$ under reasonable moment conditions. In fact, in \cite{CCK2017} it is conjectured that $\log^7d$ would be replaced by $\log^3d$ in \eqref{eq:cck2017} (see Remark 2.1 in \cite{CCK2017}). 
In this paper, we show that $\log^7d$ can be replaced by $\log^5d$ in \eqref{eq:cck2017} under the \textit{same} assumptions as in \cite{CCK2017}. Moreover, if $S_n^X$ has a common factor across the components, we can further reduce $\log^7d$ in \eqref{eq:cck2017} to $\log^3d$. 
Thus, under appropriate moment conditions, we obtain $\rho_n(\mathcal{A}^{\mathrm{re}})\to0$ as $n\to\infty$ if $(\log d)^5/n\to0$ in a general setting and $(\log d)^3/n\to0$ in the presence of a common factor across the components of $S_n^X$. Note that it is still unknown whether these conditions are improvable or not in a minimax sense (see the end of Section \ref{sec:main} for a discussion). 

We shall mention that there are a few relevant studies which intend to relax the dimension growth conditions in convergences related to the above problems: \citet{DZ2020} have shown that the condition $(\log d)^5/n\to0$ is sufficient to obtain the consistency of some bootstrap approximations for $\max_{1\leq j\leq d}S^X_{n,j}$. They have also shown that the Rademacher bootstrap approximation for $\max_{1\leq j\leq d}S^X_{n,j}$ is consistent if $(\log d)^4/n\to0$ and $X_i$'s are symmetric. \citet{KMB2019} have proved $\rho_n(\mcl{A}^m)\to0$ as $n\to\infty$ under the condition $(\log d)^4/n\to0$ when the median of $\max_{1\leq j\leq d}Z^X_{n,j}$ is tight as $n\to\infty$. 
Compared to these existing results, this paper \textit{directly} improves the dimension growth conditions of some estimates obtained in \cite{CCK2017}; see Remark \ref{rmk:main} (see also Remarks \ref{rmk:anti} and \ref{rmk:DZ2020}). 

% organization
The remainder of the paper is organized as follows. 
Section \ref{sec:main} presents the main results of the paper, while Section \ref{sec:boot} develops a bootstrap approximation theorem complementing the main results in terms of statistical applications. 
Section \ref{sec:lemma} demonstrates a fundamental lemma and its proof. 
Sections \ref{sec:proof2}--\ref{sec:proof3} are devoted to the proofs for the results stated in Sections \ref{sec:main}--\ref{sec:boot}.

\section*{Notation}

Throughout the paper, we assume $d\geq3$ and $n\geq3$. 
We regard all vectors as column vectors. Given a vector $x\in\mathbb{R}^d$, we denote by $x_j$ the $j$-th coordinate of $x$, i.e.~$x=(x_1,\dots,x_d)^\top$. Here, $\top$ means transposition of a matrix. 
We write $\|x\|_{\ell_\infty}=\max_{1\leq j\leq d}|x_j|$. 
%Also, for a $d$-dimensional random vector $F$, we denote by $F_j$ the $j$-th coordinate variable of $F$. 
Given a sequence $X=(X_i)_{i=1}^n$ of random vectors in $\mathbb{R}^d$, we denote the $j$-th component of $X_i$ by $X_{ij}$ or $X_{i,j}$. 
%For a vector $x=(x_1,\dots,x_d)^\top\in\mathbb{R}^d$, we write $\max x=\max_{1\leq j\leq d}x_j$ and . 
%$Z^X_n=(Z^X_{n,1},\dots,Z^X_{n,d})^\top$: a $d$-dimensional centered Gaussian random vector with covariance matrix $\expe{S_n^X(S_n^X)^\top}$
For a positive integer $k$, we write $[k]:=\{1,\dots,k\}$. 
$\mcl{B}(\mathbb{R})$ denotes the Borel $\sigma$-field of $\mathbb{R}$. 
For a function $h:\mathbb{R}^d\to\mathbb{R}$, we set $\|h\|_\infty:=\sup_{x\in\mathbb{R}^d}|h(x)|$. 
$C^m_b(\mathbb{R}^d)$ denotes the space of all $C^m$ functions all of whose partial derivatives are bounded. 
We write $\partial_{j_1\dots j_r}=\frac{\partial^r}{\partial x_{j_1}\cdots\partial x_{j_r}}$ for short. 
Given a random variable $\xi$, we set $\|\xi\|_p:=\{\expe{|\xi|^p}\}^{1/p}$ for every $p>0$. Also, we define the $\psi_1$-Orlicz norm of $\xi$ by 
$
\|\xi\|_{\psi_1}:=\inf\{C>0:\expe{\psi_1(|\xi|/C)}\leq1\},
$ 
where $\psi_1(x):=\exp(x)-1$. %Note that $\|\cdot\|_{\psi_\alpha}$ is indeed a norm (on a suitable space) if and only if $\alpha\geq1$.
For two real numbers $a$ and $b$, the notation $a\lesssim b$ means that $a\leq cb$ for some universal constant $c>0$. 

\section{Main results}\label{sec:main}

The following quantities play a key role to deduce our results:
\begin{definition}
For a random vector $F$ in $\mathbb{R}^d$, the \textit{concentration function} $\mcl{C}_F:(0,\infty)\to[0,1]$ is defined by
\[
\mcl{C}_F(\varepsilon):=\sup_{y\in\mathbb{R}^d}P\lpa0\leq\max_{1\leq j\leq d}(F_j-y_j)\leq \varepsilon\rpa,\qquad\varepsilon>0.
\]
We also set
\[
\Theta_X:=\sup_{\varepsilon>0}\varepsilon^{-1}\mcl{C}_{Z_n^X}(\varepsilon).
\]
\end{definition}
This definition of the concentration function $\mcl{C}_F$ is a multivariate extension of the one used in Section 2 of \cite[Chapter 15]{LeCam1986}. When $d=1$, $\mcl{C}_F$ is essentially the same quantity as the \textit{L\'evy concentration function} considered in \cite[Definition 1]{CCK2015}. In fact, in this case we evidently have
\[
\mcl{C}_F(2\varepsilon)=\sup_{y\in\mathbb{R}}P\lpa|F-y|\leq\ve\rpa.
\]
The quantity $\Theta_X$ measures the degree of \textit{anti-concentrations} of $Z_n^X$. 
As emphasized in \cite{CCK2013,CCK2015}, it is crucial that $Z_n^X$ exhibits reasonable anti-concentrations with respect to the dimension $d$ in order to obtain high-dimensional CLTs.  

The following is the main result of the paper. 
\begin{theorem}\label{thm:main}
Assume $\Theta_X<\infty$. 
Assume also $\max_{1\leq j\leq d}n^{-1}\sum_{i=1}^n\expe{X_{ij}^4}\leq B_n^2$ for some constant $B_n\geq1$. Then the following statements hold true:
\begin{enumerate}[label=(\alph*)]

\item\label{thm:psi} If $\max_{1\leq i\leq n}\max_{1\leq j\leq d}\|X_{ij}\|_{\psi_1}\leq B_n$, there is a universal constant $C>0$ such that 
\begin{equation*}%\label{eq:kol-psi-1}
\rho_n(\mathcal{A}^{\mathrm{re}})
\leq C\Theta_X^{2/3}\left(\delta_{n,1}^{1/6}+\delta_{n,2}^{1/3}\right),
\end{equation*} 
where
\[
\delta_{n,1}:=\frac{B_n^2(\log d)^3}{n},\qquad
\delta_{n,2}:=\frac{B_n^2(\log d)^2(\log n)^2}{n}.
\]
%In particular, there is another universal constant $C_4>0$ such that  
%\begin{equation}\label{eq:kol-psi-2}
%\sup_{A\in\mathcal{A}^{\mathrm{re}}}\labs P\lpa S_n^X\in A\rpa-P\lpa Z_n^X\in A\rpa\rabs
%\leq C_2B_n^{2/3}(\ul{\sigma}^{2/3}+\ul{\sigma}^{-1/3})\Theta_X^{2/3}\lpa\frac{(\log d)^3}{n}\rpa^{1/6}.
%\end{equation}

\item\label{thm:mom} If $\max_{1\leq i\leq n}\|\max_{1\leq j\leq d}|X_{ij}|\|_{q}\leq D_n$ for some $q\in(2,\infty)$ and $D_n\geq1$, there is a constant $K_q>0$ which depends only on $q$ such that
\begin{align*}
\rho_n(\mathcal{A}^{\mathrm{re}})
&\leq K_q\Theta_X^{2/3}\left\{\delta_{n,1}^{1/6}+\delta_{n,2}(q)^{1/3}\right\},
\end{align*}
where
\[
\delta_{n,2}(q):=\frac{D_n^2(\log d)^{2-2/q}}{n^{1-2/q}}.
\]

\end{enumerate}
\end{theorem}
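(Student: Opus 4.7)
My plan is to follow the Chernozhukov-Chetverikov-Kato smoothing framework, refined with the tools flagged in the abstract---the randomized Lindeberg method and Stein kernels---so as to save factors of $\log d$ relative to \eqref{eq:cck2017}. The three conceptual steps will be: construct a smooth approximation to the indicator of a hyperrectangle; prove a sharp Gaussian comparison for such smooth test functions; and combine the two via the anti-concentration constant $\Theta_X$.

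For the smoothing step, I would fix $A=\{x\in\mathbb{R}^d:a_j\leq x_j\leq b_j,\,j\in[d]\}\in\mathcal{A}^{\mathrm{re}}$, encode it through the $2d$ linear functionals $a_j-x_j$ and $x_j-b_j$, and compose the log-sum-exp smoothed max $F_\beta(y)=\beta^{-1}\log\sum_k e^{\beta y_k}$ with a $C^\infty$ switch $\varphi:\mathbb{R}\to[0,1]$ that is $1$ on $(-\infty,0]$ and $0$ on $[1,\infty)$. This produces functions $h^{\pm}_{A,\beta,\epsilon}\in C^\infty_b(\mathbb{R}^d)$ bracketing $1_A$ between indicators of $A$ enlarged and shrunk by $O(\beta^{-1}\log d+\epsilon)$, with explicit control of their partial derivatives up to order three. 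The anti-concentration estimate then yields $|P(Z_n^X\in A)-E[h^{\pm}_{A,\beta,\epsilon}(Z_n^X)]|\lesssim \Theta_X(\epsilon+\beta^{-1}\log d)$.

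The heart of the matter is establishing a Gaussian comparison of the form
\[
\bigl|E[h(S_n^X)]-E[h(Z_n^X)]\bigr|\lesssim \beta^2 B_n^2 n^{-1/2}\log d + \text{(truncation residual)},
\]
rather than the $\beta^3$ growth produced by a direct third-order Taylor expansion. I would obtain this via a Lindeberg interpolation $W(t)=\sqrt{1-t}\,S_n^X+\sqrt{t}\,Z_n^X$ with a randomized interpolation time: at positive times the Gaussian component equips $W(t)$ with an explicit Stein kernel, and integration by parts against it trades one derivative of $h$ for a factor controlled by the kernel. This single saved derivative is exactly what shaves one power of $\beta$ from the bound and, after optimization, converts the $(\log d)^7$ of \eqref{eq:cck2017} into $(\log d)^5$. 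I expect the fundamental lemma in Section~\ref{sec:lemma} to be precisely this smooth comparison, stated abstractly so that parts (a) and (b) can be built modularly on top of it.

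The remaining step is to handle tail contributions by truncating $X_{ij}$: at the scale $\sim B_n\log n$ in part (a) using the $\psi_1$ bound, and at a polynomial scale of order $D_n n^{1/q}$ in part (b) using the $L^q$ bound on $\max_j|X_{ij}|$. The truncated variables still satisfy fourth-moment control $B_n^2$, so the smooth comparison applied to them produces the $\delta_{n,1}^{1/6}$ term after optimizing $\beta$ and $\epsilon$ to balance $\beta^2 B_n^2 n^{-1/2}\log d$ against $\Theta_X\beta^{-1}\log d$; the discarded tails contribute the $\delta_{n,2}$ and $\delta_{n,2}(q)$ terms respectively. The principal obstacle will be the sharp smooth comparison itself: making the randomized-Lindeberg-plus-Stein-kernel argument exploit the fine spectral structure of the Hessian of $F_\beta$ (which has trace of order $\beta$ and operator norm controlled by $\log d$) rather than settling for the naive $\beta^r$ bounds on its partial derivatives that underlie \eqref{eq:cck2017}.
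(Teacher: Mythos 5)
Your outer scaffolding matches the paper (doubling the coordinates to $2d$ one-sided functionals, smooth max plus a smooth indicator, truncation at $B_n\log n$ resp.\ a polynomial level in the $L^q$ case, and the final optimization of the smoothing parameters against $\Theta_X$), but the heart of the argument --- the sharp smooth comparison --- is where your proposal has a genuine gap. You propose a single interpolation $W(t)=\sqrt{1-t}\,S_n^X+\sqrt{t}\,Z_n^X$ with a ``randomized interpolation time'', integrating by parts against the Stein kernel supplied by the Gaussian component. That integration by parts is exactly what the classical Slepian--Stein argument of Chernozhukov--Chetverikov--Kato already performs; since $X_i$ and the Gaussian match moments only to second order, the third-order Taylor term with non-vanishing third moments reappears, carrying third derivatives of $h\circ\Phi_\beta$, and randomizing the time variable $t$ does nothing to remove it. The derivative saving in the paper comes from a different two-step decomposition (Lemma \ref{lemma:main}): first a \emph{discrete} randomized Lindeberg swap (Lemma \ref{lemma:lindeberg}) between $X_i$ and $Y_i=\xi_iX_i$, where $\xi_i$ are bounded multipliers with $\expe{\xi_i}=0$, $\expe{\xi_i^2}=\expe{\xi_i^3}=1$, so that moments match to third order and the Lindeberg error is pushed to fourth order (the randomization there is over the \emph{order of the swaps}, and its role is to produce $\ex{\max_j\sum_i X_{ij}^4}$ rather than $\sum_i\ex{\max_jX_{ij}^4}$); second, a comparison of $S_n^Y$ with $Z$ that requires no Taylor expansion at all, because conditionally on $X$ the vector $S_n^Y$ has an \emph{exact} Stein kernel built from the bounded kernels of the $\xi_i$ (Lemma \ref{lemma:uni-stein}), and Lemma \ref{lemma:stein} then gives a bound involving only $\max\{\|h''\|_\infty,\beta\|h'\|_\infty\}$ times the sup-deviation of that kernel from $\mf{C}$, which Hoeffding's inequality controls by $\sqrt{\log d}\,\Delta^X_{n,1}/\sqrt{n}$. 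It is the Stein kernel of the multipliers, not of the Gaussian, that buys the missing derivative; your proposal contains neither the third-moment-matching multiplier step nor this conditional-kernel argument, and you yourself flag the comparison as the ``principal obstacle''.

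The quantitative claim you state for the comparison also cannot deliver the theorem even if granted: balancing $\Theta_X(\epsilon+\beta^{-1}\log d)$ against $\beta^2B_n^2n^{-1/2}\log d$ and using Nazarov's bound $\Theta_X\lesssim\sqrt{\log d}$ gives an error of order $\bigl(B_n^4(\log d)^8/n\bigr)^{1/6}$, i.e.\ worse than \eqref{eq:cck2017}, not better. The bound one actually needs (and which the paper proves) is of order $\varepsilon^{-2}(\log d)\bigl(\Delta^X_{n,0}+\Delta^X_{n,1}\sqrt{\log d/n}\bigr)$ with $\Delta^X_{n,1}\lesssim B_n$ after truncation --- in your notation roughly $\beta^2B_n(\log d)^{-1/2}n^{-1/2}$ --- plus a fourth-order remainder of size $\varepsilon^{-4}(\log d)^3/n$; only with this $\sqrt{\log d}$-level (rather than $\log d$-level) covariance-fluctuation term does the optimization yield $\Theta_X^{2/3}\delta_{n,1}^{1/6}$ and hence the $(\log d)^5$ rate. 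Your closing suggestion to exploit the spectral structure of the Hessian of the smooth max is a different (unproven) idea and is not needed once the multiplier/Stein-kernel decomposition is in place.
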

To get meaningful estimates from Theorem \ref{thm:main}, we need to bound the quantity $\Theta_X$. 
The following result, which is called \textit{Nazarov's inequality} in \cite{CCK2017}, can be used for this purpose (see \cite{CCK2017nazarov} for the proof). 
\begin{lemma}[Nazarov's inequality]\label{lemma:nazarov}
Let $Z$ be a centered Gaussian vector in $\mathbb{R}^d$ with $\ul{\sigma}:=\min_{1\leq j\leq d}\|Z_j\|_2>0$. Then, for any $\ve>0$,
\[
\mcl{C}_Z(\ve)\leq\frac{\ve}{\ul{\sigma}}(\sqrt{2\log d}+2).
\]
\end{lemma}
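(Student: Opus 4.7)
The plan is to reduce the claim to a uniform density bound for the maximum of the Gaussian vector, then extract the $\sqrt{2\log d}$ factor via a softmax surrogate together with Ehrhard's inequality.

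First, by the scaling $Z\mapsto Z/\ul\sigma$, we may assume $\min_j\|Z_j\|_2\ge 1$ and prove $\mcl{C}_Z(\ve)\le\ve(\sqrt{2\log d}+2)$. Fix $y\in\mathbb{R}^d$ and put $W:=\max_j(Z_j-y_j)$ with CDF $H(t):=P(W\le t)$; then the probability we must bound is $H(\ve)-H(0)=\int_0^\ve H'(t)\,dt$, so it suffices to show $\sup_t H'(t)\le\sqrt{2\log d}+2$ uniformly in $y$.

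Next, introduce the smooth surrogate $\Psi_\beta(w):=\beta^{-1}\log\sum_{j=1}^d e^{\beta w_j}$, which satisfies $\max_j w_j\le\Psi_\beta(w)\le\max_j w_j+\beta^{-1}\log d$ and whose gradient $\nabla\Psi_\beta$ is a probability vector supported softly near the argmax. Setting $U_\beta:=\Psi_\beta(Z-y)$, the sandwich yields the inclusion $\{W\in[0,\ve]\}\subset\{U_\beta\in[0,\ve+\beta^{-1}\log d]\}$, converting the task into one of bounding the density of $U_\beta$.

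The density of $U_\beta$ is then estimated by Gaussian integration by parts (Stein's identity) or the coarea formula: the bound is driven by the product of $\|\nabla\Psi_\beta\|$-type quantities against the marginal Gaussian density $\phi_{\sigma_j}(y_j+t)\le(2\pi)^{-1/2}$. Crucially, Ehrhard's inequality—namely, the concavity of $t\mapsto\Phi^{-1}(H(t))$—controls the location where the mass of $H$ concentrates and so translates the pointwise density estimate into the desired $\sqrt{2\log d}$ scaling. Choosing $\beta$ of order $\sqrt{\log d}/\ve$ balances the approximation error from Step 3 against the density term, yielding $\mcl{C}_Z(\ve)\le\ve(\sqrt{2\log d}+2)$.

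The main obstacle is producing exactly the $\sqrt{2\log d}$ factor. A Lipschitz-based estimate (via Borell's lemma and the fact that $\Psi_\beta$ is $1$-Lipschitz in the $\ell_\infty$ norm) would yield only an $O(\log d)$ bound after optimizing $\beta$. The sharper scaling requires a genuinely log-concave argument: concavity of $\Phi^{-1}\circ H$ forces the effective distribution of $W$ to be suitably spread out, so that the density cannot exceed $O(\sqrt{\log d})$ per unit of $\ul\sigma$. Managing the interplay between the log-concavity input, the softmax gradient structure, and the $d$-fold sum over boundary coordinates is the crux of the argument; the additive $+2$ in the bound accommodates the slack produced by the optimization of $\beta$ and the truncation errors in this matching.
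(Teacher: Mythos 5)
There is a genuine gap, and it sits exactly at the step that carries all the content. Note first that the paper does not prove this lemma itself: it cites the separate note of Chernozhukov, Chetverikov and Kato for a detailed proof, which proceeds by differentiating $t\mapsto P(Z\leq y+t\mathbf{1})$, writing the derivative as $\sum_{j=1}^d\sigma_j^{-1}\phi\lpa(y_j+t)/\sigma_j\rpa P\lpa Z_k\leq y_k+t\ \forall k\neq j\mid Z_j=y_j+t\rpa$, and splitting the coordinates according to whether $(y_j+t)/\sigma_j$ exceeds $\sqrt{2\log d}$; the large coordinates contribute at most a constant because $\phi(\sqrt{2\log d})=1/(d\sqrt{2\pi})$, and the small ones are handled by a separate argument. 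Your reduction to the uniform derivative bound $\sup_tH'(t)\leq(\sqrt{2\log d}+2)/\ul{\sigma}$ is fine (and is just a restatement of the lemma in integrated form), but you never actually establish that bound: the sentence invoking ``Gaussian integration by parts or the coarea formula'' together with Ehrhard's inequality names tools without giving an argument, and the two concrete devices you do describe cannot deliver the stated constant.

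Concretely: (i) the softmax detour is self-defeating. With $\beta\asymp\sqrt{\log d}/\ve$ the inflated window $[0,\ve+\beta^{-1}\log d]$ has length of order $\ve\sqrt{\log d}$, so to conclude you would need the density of $U_\beta=\Phi_\beta(Z-y)$ to be bounded by a dimension-free constant over $\ul{\sigma}$. But $U_\beta$ differs from $W=\max_j(Z_j-y_j)$ by at most $\beta^{-1}\log d$, and the density of $W$ genuinely attains order $\sqrt{\log d}/\ul{\sigma}$ (e.g.\ i.i.d.\ coordinates, $y=0$), so the density of $U_\beta$ is of the same order; the product is then of order $\ve\log d$, i.e.\ precisely the suboptimal bound you concede a Lipschitz argument gives. (ii) The Ehrhard ingredient is correct as a fact --- $t\mapsto\Phi^{-1}(H(t))$ is concave because $\{z\leq y+t\mathbf{1}\}$ is a translate of a convex set --- but concavity only bounds the derivative of $G:=\Phi^{-1}\circ H$ at $t$ through the value of $G$ at a second point, so one needs a quantitative lower bound on $H$ at a nearby location, uniformly in $y$. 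The natural choice (a union bound using the marginal densities $\leq1/(\ul{\sigma}\sqrt{2\pi})$) produces a bound that degrades to order $d$ when $P(Z\leq y)$ is small, which is exactly the regime that the coordinate-splitting in the actual proof is designed to control. As written, your argument therefore reduces the lemma to an unproved density estimate of the same strength as the lemma itself; to repair it you would have to supply the missing mechanism, e.g.\ the large/small coordinate dichotomy at level $\sqrt{2\log d}$ used in the cited proof.
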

%A proof of Lemma \ref{lemma:nazarov} is found in \cite{CCK2017nazarov}. 
Theorem \ref{thm:main} and Lemma \ref{lemma:nazarov} immediately yield the following result. 
\begin{corollary}\label{coro:nazarov}
Assume $\ul{\sigma}:=\min_{1\leq j\leq d}\|S^X_{n,j}\|_2>0$. 
Then, under the assumptions of Theorem \ref{thm:main}\ref{thm:psi}, there is a universal constant $C>0$ such that 
\begin{equation*}%\label{eq:kol-psi-1}
\rho_n(\mathcal{A}^{\mathrm{re}})
\leq \frac{C}{\ul{\sigma}^{2/3}}\left(\frac{B_n^2(\log dn)^5}{n}\right)^{1/6}.
\end{equation*} 
Also, under the assumptions of Theorem \ref{thm:main}\ref{thm:mom}, there is a constant $K_q>0$ depending only on $q$ such that
\begin{align*}
\rho_n(\mathcal{A}^{\mathrm{re}})
&\leq \frac{K_q}{\ul{\sigma}^{2/3}}\left\{\lpa\frac{B_n^2(\log d)^5}{n}\rpa^{1/6}+\lpa\frac{D_n^2(\log d)^{3-2/q}}{n^{1-2/q}}\rpa^{1/3}\right\}.
\end{align*}
\end{corollary}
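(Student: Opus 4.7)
The plan is to apply Nazarov's inequality (Lemma \ref{lemma:nazarov}) to bound $\Theta_X$ in terms of $\ul{\sigma}$ and $\log d$, and then substitute into Theorem \ref{thm:main}. Note that $Z_n^X$ is a centered Gaussian vector in $\mathbb{R}^d$ whose $j$-th coordinate has variance $\|S_{n,j}^X\|_2^2$, so $\min_{1\le j\le d}\|Z_{n,j}^X\|_2 = \ul{\sigma} > 0$. Lemma \ref{lemma:nazarov} then yields $\mcl{C}_{Z_n^X}(\ve) \le \ve(\sqrt{2\log d}+2)/\ul{\sigma}$ for every $\ve > 0$, and taking the supremum of $\ve^{-1}\mcl{C}_{Z_n^X}(\ve)$ gives $\Theta_X \le (\sqrt{2\log d}+2)/\ul{\sigma}$. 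Since $d\ge 3$, this simplifies to $\Theta_X^{2/3} \lesssim (\log d)^{1/3}/\ul{\sigma}^{2/3}$.

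Substituting this into Theorem \ref{thm:main} reduces the corollary to direct computation. For part (b) the algebra matches cleanly: $(\log d)^{1/3}\delta_{n,1}^{1/6} = \lpa B_n^2(\log d)^5/n\rpa^{1/6}$ and $(\log d)^{1/3}\delta_{n,2}(q)^{1/3} = \lpa D_n^2(\log d)^{3-2/q}/n^{1-2/q}\rpa^{1/3}$, which are exactly the two terms in the second statement. For part (a), analogous identities give
\[
(\log d)^{1/3}\delta_{n,1}^{1/6} = \lpa B_n^2(\log d)^5/n\rpa^{1/6} \le \lpa B_n^2(\log dn)^5/n\rpa^{1/6},
\]
\[
(\log d)^{1/3}\delta_{n,2}^{1/3} = \lpa B_n^2(\log d)^3(\log n)^2/n\rpa^{1/3} \le \lpa B_n^2(\log dn)^5/n\rpa^{1/3},
\]
where the second inequality uses $\log d\le\log(dn)$, $\log n\le\log(dn)$, and hence $(\log d)^3(\log n)^2\le(\log dn)^5$.

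Writing $x := B_n^2(\log dn)^5/n$, the bound for part (a) reduces to $\rho_n(\mcl{A}^{\mathrm{re}}) \lesssim (x^{1/6}+x^{1/3})/\ul{\sigma}^{2/3}$, and the remaining step is to absorb $x^{1/3}$ into a constant multiple of $x^{1/6}$. When $x \le 1$ this follows from $x^{1/3} \le x^{1/6}$; in the complementary regime one invokes the trivial bound $\rho_n(\mcl{A}^{\mathrm{re}}) \le 1$ together with $\ul{\sigma}^{2/3} \le B_n^{1/3}$ (a consequence of Cauchy--Schwarz applied to the fourth-moment hypothesis, which gives $\|S_{n,j}^X\|_2^4 \le n^{-1}\sum_i\expectation[X_{ij}^4] \le B_n^2$) in order to fold the excess into the universal constant. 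This final case split is the only step that requires any care; everything else is substitution.
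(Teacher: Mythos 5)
Your overall route is exactly the paper's intended one (the paper gives no written proof beyond ``Theorem \ref{thm:main} and Lemma \ref{lemma:nazarov} immediately yield the result''): bound $\Theta_X\leq(\sqrt{2\log d}+2)/\ul{\sigma}\lesssim\sqrt{\log d}/\ul{\sigma}$ via Nazarov and substitute into Theorem \ref{thm:main}. Your algebra for part (b) is exact and needs no further argument, and the identities and the bound $(\log d)^3(\log n)^2\leq(\log dn)^5$ for part (a) are fine, as is the observation $\ul{\sigma}^2\leq B_n$.

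The gap is in your final case split for part (a). In the regime $x:=B_n^2(\log dn)^5/n>1$ you claim the bound is trivial because $\rho_n\leq1$ and $\ul{\sigma}^{2/3}\leq B_n^{1/3}$; but what you actually need is $Cx^{1/6}\geq\ul{\sigma}^{2/3}$, and using $\ul{\sigma}^{2/3}\leq B_n^{1/3}$ this reduces to $C\,B_n^{1/3}\lpa(\log dn)^5/n\rpa^{1/6}\geq B_n^{1/3}$, i.e.\ $(\log dn)^5\gtrsim n$ --- which is \emph{not} implied by $x>1$, since $B_n$ has no upper bound. Concretely, take $d=n$, $X_{ij}=\pm\sqrt{B_n}$ symmetric i.i.d.\ with $B_n^2=n/(\log dn)^4$: then all hypotheses of Theorem \ref{thm:main}\ref{thm:psi} hold, $x=\log(dn)>1$, $\ul{\sigma}^{2}=B_n$, and the claimed right-hand side is $C(\log dn)^{5/6}/n^{1/6}\ll1$, so the trivial bound $\rho_n\leq1$ cannot absorb anything; moreover in this regime the unabsorbed term $(\log d)^{1/3}\delta_{n,2}^{1/3}/\ul{\sigma}^{2/3}$ genuinely exceeds the claimed bound (by a factor of order $(\log n)^{1/6}$ here), so the substitution alone does not close the argument either. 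Your reduction is valid exactly when $x\lesssim1$, or when $\ul{\sigma}=O(1)$ (in which case $x>1$ really does make the bound trivial); the remaining corner --- $B_n\gg1$ with $\ul{\sigma}^2$ comparable to $B_n$ and $(\log dn)^5\ll n$ --- requires an additional idea (for instance a rescaling of the $X_i$, under which hyperrectangles and $\rho_n$ are invariant, so that one may normalize the variances before applying the theorem). This is a subtlety the paper's one-line derivation also glosses over, but your explicit attempt to patch it with ``$\rho_n\leq1$ plus $\ul{\sigma}^{2/3}\leq B_n^{1/3}$'' does not work as stated.
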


\begin{rmk}\label{rmk:main}
Corollary \ref{coro:nazarov} improves the bounds given by \cite[Proposition 2.1]{CCK2017} in terms of dimension dependence under the same assumptions. In particular, we have $\rho_n(\mathcal{A}^{\mathrm{re}})\to0$ as $n\to\infty$ if $(\log d)^5/n=o(1)$, provided that $\max_{i,j}\|X_{ij}\|_{\psi_1}=O(1)$ or $\max_i\|\max_j|X_{ij}|\|_4=O(1)$. 
As a consequence, we can readily improve the dimension growth conditions in existing results obtained by applications of Proposition 2.1 in \cite{CCK2017} (or Corollary 2.1 in \cite{CCK2013}). For example, the condition $(\log p_1)^7=o(n)$ imposed in \cite[Corollary 3]{BCK2015} can be replaced by $(\log p_1)^5=o(n)$. 
Another example is Condition E in \cite[Theorem 2.1]{BCCHK2018}, where we can replace $\log^7(pn)$ by $\log^5(pn)$. 
\end{rmk}

In some situation we can bound $\Theta_X$ by a dimension-free constant. This is the case when $S_n^X$ has a common factor across the components:
\begin{lemma}\label{lemma:factor}
Let $Z$ be a centered Gaussian vector in $\mathbb{R}^d$. 
Also, let $\zeta$ be a standard Gaussian variable independent of $Z$. 
Let $a_1,\dots,a_d$ be non-zero real numbers and define $F:=(Z_1+a_1\zeta,\dots,Z_d+a_d\zeta)^\top$. Then we have
\[
\mcl{C}_F(\varepsilon)\leq\frac{2\varepsilon}{\sqrt{2\pi}\mf{a}}
\]
for any $\varepsilon>0$, where $\mf{a}:=\min_{1\leq j\leq d}|a_j|$. 
\end{lemma}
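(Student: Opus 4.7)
The plan is to condition on $Z$, which turns the event $\{0\le\max_{1\le j\le d}(F_j-y_j)\le\varepsilon\}$ into a one-dimensional event for the single standard Gaussian $\zeta$.

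For each $z\in\mathbb{R}^d$ and $t\in\mathbb{R}$, the event $\{\max_j(F_j-y_j)\le t\}$ is the intersection of the half-lines $\{a_j\zeta\le t+y_j-z_j\}$, $j=1,\dots,d$, which reduces to $\{\zeta\in[L_z(t),U_z(t)]\}$ with
\[
U_z(t):=\min_{j:\,a_j>0}\frac{t+y_j-z_j}{a_j},\qquad L_z(t):=\max_{j:\,a_j<0}\frac{t+y_j-z_j}{a_j},
\]
where $\min\emptyset=+\infty$ and $\max\emptyset=-\infty$. Integrating over $z$ therefore reduces the lemma to a uniform-in-$z$ bound on the Gaussian measure of the ``annular'' set $[L_z(\varepsilon),U_z(\varepsilon)]\setminus(L_z(0),U_z(0))$.

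The key quantitative input is that $t\mapsto U_z(t)$ is non-decreasing and $1/\mf{a}$-Lipschitz, and the same holds for $t\mapsto-L_z(t)$. Indeed, if $j_0$ attains the $\min$ defining $U_z(t_0)$, then for $t_1\ge t_0$
\[
U_z(t_1)\le\frac{t_1+y_{j_0}-z_{j_0}}{a_{j_0}}=U_z(t_0)+\frac{t_1-t_0}{a_{j_0}}\le U_z(t_0)+\frac{t_1-t_0}{\mf{a}},
\]
and monotonicity follows from the fact that each argument of the $\min$ is non-decreasing in $t$; the estimate for $L_z$ is symmetric, using $a_j\le-\mf{a}$ for $a_j<0$. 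Combined with the elementary bound $|\Phi(b)-\Phi(a)|\le|b-a|/\sqrt{2\pi}$, this shows that each moving endpoint of $[L_z(t),U_z(t)]$ shifts the Gaussian CDF by at most $\varepsilon/(\sqrt{2\pi}\,\mf{a})$ as $t$ ranges over $[0,\varepsilon]$.

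In the one-sided case where all $a_j$ have the same sign (the main situation of interest for factor models), exactly one of $L_z$, $U_z$ is finite, so the conditional probability collapses to a single CDF difference such as $\Phi(U_z(\varepsilon))-\Phi(U_z(0))$ and is bounded directly by $\varepsilon/(\sqrt{2\pi}\,\mf{a})$; taking expectation over $Z$ and supremum over $y$ then yields the lemma. The delicate step, which I expect to be the main obstacle, is the mixed-sign case: both $L_z$ and $U_z$ are then finite, the event decomposes as the disjoint union $[L_z(\varepsilon),L_z(0)]\cup[U_z(0),U_z(\varepsilon)]$, and a naive Lipschitz estimate loses a factor of two. To recover the sharp constant one should exploit that the two pieces are separated by $(L_z(0),U_z(0))$, so the standard Gaussian density cannot simultaneously be close to its peak $1/\sqrt{2\pi}$ on both; a careful accounting along these lines should deliver the advertised bound.
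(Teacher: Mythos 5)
Your conditioning-on-$Z$ strategy is genuinely different from the paper's argument, which never conditions on $Z$: there the event $\{\max_j(F_j-y_j)\ge 0\}$ is split into the disjoint pieces $B_j=\{F_j-y_j\ge 0\}\setminus\bigcup_{k<j}\{F_k-y_k\ge 0\}$, and on each $B_j$ one bounds the probability that $a_j\zeta$ lies in an interval of length $\varepsilon$ determined by $Z_j$. In the one-sided case (all $a_j$ of the same sign) your argument is complete and correct, and arguably cleaner: conditionally on $Z=z$ the map $\zeta\mapsto\max_j(z_j+a_j\zeta-y_j)$ is continuous, increasing, and grows by at least $\mathfrak{a}$ per unit increase of $\zeta$, so the event is an interval of length at most $\varepsilon/\mathfrak{a}$ and the bound follows from the fact that the standard normal density is at most $1/\sqrt{2\pi}$.

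The gap is exactly where you flagged it, but it cannot be closed: in the mixed-sign case the advertised constant is unattainable, so no accounting of the two pieces will deliver it. Take $d=2$, $a_1=1$, $a_2=-1$ (so $\mathfrak{a}=1$), $y=0$, and $Z$ degenerate at $0$ (or with i.i.d.\ $N(0,\sigma^2)$ entries, $\sigma$ small). Then $\max_j(F_j-y_j)=|\zeta|$ and $P(0\le\max_j(F_j-y_j)\le\varepsilon)=2\Phi(\varepsilon)-1\approx 2\varepsilon/\sqrt{2\pi}$ (here $\Phi$ is the standard normal distribution function), which exceeds $\varepsilon/(\sqrt{2\pi}\,\mathfrak{a})$ for all small $\varepsilon$. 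In your notation, $L_z(0)=U_z(0)=0$ for every $z$ in this example, so the two pieces $[L_z(\varepsilon),L_z(0)]$ and $[U_z(0),U_z(\varepsilon)]$ both abut the peak of the density, and averaging over $z$ gives no rescue; the best your route yields in general is $2\varepsilon/(\sqrt{2\pi}\,\mathfrak{a})$, and the example shows the factor $2$ is sharp. Note this is not a defect specific to your method: the same example contradicts the lemma as printed, and it also pinpoints the delicate step in the paper's own proof, namely the displayed equality that integrates the Gaussian density over the $\zeta$-interval ``on $B_j$'' as though $B_j$ were independent of $\zeta$ (the sets $B_j$ are built from $\zeta$). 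The statement with constant $1/\sqrt{2\pi}$ is correct when all $a_j$ share a sign --- which your one-sided argument proves --- and holds in general with an extra factor $2$, which is all that is needed for bounding $\Theta_X$ and for Corollary \ref{coro:factor} up to universal constants.
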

Lemma \ref{lemma:factor} is inspired by Lemma 1 in \cite[Chapter 15]{LeCam1986}. In fact, if $a_1=\cdots=a_d$, Lemma \ref{lemma:factor} is obtained as a special case of that lemma. 
\begin{corollary}\label{coro:factor}
Suppose that there is a vector $a\in\mathbb{R}^d$ such that $\mf{C}_n^X-aa^\top$ is positive semidefinite and $\mf{a}:=\min_{1\leq j\leq d}|a_j|>0$. Then, under the assumptions of Theorem \ref{thm:main}\ref{thm:psi},  there is a universal constant $C>0$ such that 
\begin{equation*}%\label{eq:kol-psi-1}
\rho_n(\mathcal{A}^{\mathrm{re}})
\leq \frac{C}{\mf{a}^{2/3}}\left(\delta_{n,1}^{1/6}+\delta_{n,2}^{1/3}\right).
\end{equation*} 
Also, under the assumptions of Theorem \ref{thm:main}\ref{thm:mom}, there is a constant $K_q>0$ depending only on $q$ such that
\begin{align*}
\rho_n(\mathcal{A}^{\mathrm{re}})
&\leq \frac{K_q}{\mf{a}^{2/3}}\left\{\delta_{n,1}^{1/6}+\delta_{n,2}(q)^{1/3}\right\}.
\end{align*}
\end{corollary}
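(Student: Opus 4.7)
The plan is to reduce Corollary \ref{coro:factor} directly to Theorem \ref{thm:main} by proving that the assumption on $\mf{C}_n^X$ implies a dimension-free bound on $\Theta_X$ via Lemma \ref{lemma:factor}. All the work is in step one; the rest is mechanical substitution.

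First, I would exploit the hypothesis that $\mf{C}_n^X - aa^\top$ is positive semidefinite. Let $\Sigma := \mf{C}_n^X - aa^\top$, and on some probability space (possibly after enlargement) introduce a centered Gaussian vector $Z$ in $\mathbb{R}^d$ with covariance $\Sigma$, together with a standard normal variable $\zeta$ independent of $Z$. Define the auxiliary vector $F := Z + a\zeta$, whose $j$-th coordinate is $Z_j + a_j \zeta$. Since $F$ is Gaussian, centered, and has covariance $\Sigma + aa^\top = \mf{C}_n^X$, it is equal in law to $Z_n^X$. In particular $\mcl{C}_{Z_n^X}(\varepsilon) = \mcl{C}_F(\varepsilon)$ for every $\varepsilon > 0$.

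Next, I would apply Lemma \ref{lemma:factor} to $F$: because every $a_j$ satisfies $|a_j| \geq \mf{a} > 0$, the lemma gives
\[
\mcl{C}_{Z_n^X}(\varepsilon) = \mcl{C}_F(\varepsilon) \leq \frac{\varepsilon}{\sqrt{2\pi}\,\mf{a}} \qquad \text{for all } \varepsilon>0.
\]
Dividing by $\varepsilon$ and taking the supremum yields $\Theta_X \leq 1/(\sqrt{2\pi}\,\mf{a})$, so in particular $\Theta_X<\infty$ and $\Theta_X^{2/3} \lesssim \mf{a}^{-2/3}$.

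Finally, I would substitute this estimate into parts \ref{thm:psi} and \ref{thm:mom} of Theorem \ref{thm:main}, absorbing the constant $(2\pi)^{-1/3}$ into the universal constant $C$ (respectively into the constant $K_q$ depending on $q$). This yields the two inequalities stated in Corollary \ref{coro:factor}. I do not foresee any real obstacle: the only subtle point is constructing $F$ with the right joint law, but that is immediate once one notes that the decomposition $\mf{C}_n^X = \Sigma + aa^\top$ with $\Sigma$ positive semidefinite is exactly the content of the hypothesis. Everything else is bookkeeping of constants.
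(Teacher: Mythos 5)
Your proposal is correct and follows exactly the route the paper intends (the corollary is stated without a separate proof precisely because it is this immediate combination): write $Z_n^X$ in law as $Z+a\zeta$ with $Z$ Gaussian of covariance $\mf{C}_n^X-aa^\top$ and $\zeta$ an independent standard normal, invoke Lemma \ref{lemma:factor} to get $\Theta_X\leq 1/(\sqrt{2\pi}\,\mf{a})$, and substitute into Theorem \ref{thm:main}. No gaps.
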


\begin{rmk}\label{rmk:anti}
If we restrict our attention to $\rho_n(\mcl{A}^m)$, i.e.~Gaussian approximation for $\max_{1\leq j\leq d}S_{n,j}^X$, the quantity $\Theta_X$ appearing in Theorem \ref{thm:main} can be replaced by the following one:
\[
\sup_{\ve>0}\ve^{-1}\sup_{t\in\mathbb{R}}P\lpa 0\leq \max_{1\leq j\leq d}Z_{n,j}^X-t\leq\ve\rpa.
\]
To bound this quantity, we can benefit from some recently established anti-concentration inequalities. For example, Theorem 2.2 in \cite{KMB2019} develops a dimension-free bound based on the median of $\max_{1\leq j\leq d}|Z_{n,j}^X|$, while Theorem 3.2 in \cite{BO2018} deals with a situation where $\min_{1\leq j\leq d}\|S^X_{n,j}\|_2$ is small (see also the proof of \cite[Proposition C.1]{LLM2018}). 
Such a situation can also be handled by Theorem 10 in \cite{DZ2020}, which generalizes Lemma \ref{lemma:nazarov} and can be used to get a potentially better bound for $\Theta_X$; see also Remark \ref{rmk:DZ2020}.
\end{rmk}

By Theorem \ref{thm:main}, when $\max_{i,j}\|X_{ij}\|_{\psi_1}=O(1)$ or $\max_i\|\max_j|X_{ij}|\|_4=O(1)$ as $n\to\infty$, we have $\rho_n(\mcl{A}^{\mathrm{re}})\to0$ if $\Theta_X^{4}(\log d)^3/n\to0$. 
Then, it is interesting to ask whether the condition $\Theta_X^{4}(\log d)^3/n\to0$ can be weakened or not. Thus far the answer is not known to the author's knowledge, but it might be worth mentioning that there is a situation where the condition $(\log d)^3/n\to0$ cannot be weakened: 
\begin{proposition}\label{opt-kolmogorov}
Let $\xi=(\xi_{ij})_{i,j=1}^\infty$ be an array of i.i.d.~random variables such that $\|\xi_{ij}\|_{\psi_1}<\infty$, $\expe{\xi_{ij}}=0$, $\expe{\xi_{ij}^2}=1$ and $\gamma:=\expe{\xi_{ij}^3}<0$. 
Also, let $\zeta=(\zeta_j)_{j=1}^\infty$ be a sequence of i.i.d.~standard normal variables.
Then, if the sequence $d_n\in\mathbb{N}$ satisfies $(\log d_n)^3/n\to c$ as $n\to\infty$ for some $c>0$, we have
\[
\limsup_{n\to\infty}\sup_{x\in\mathbb{R}}\labs P\lpa\max_{1\leq j\leq d_n}\frac{1}{\sqrt{n}}\sum_{i=1}^n\xi_{ij}\leq x\rpa-P\lpa\max_{1\leq j\leq d_n}\zeta_j\leq x\rpa\rabs>0.
\] 
\end{proposition}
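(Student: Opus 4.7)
The plan is to choose an explicit threshold $x_n$ at which the limiting distribution of the sample maximum differs from that of the Gaussian maximum, exploiting the negative skewness $\gamma<0$ through a Cram\'er-type moderate deviation expansion. First, fix any $t\in(0,\infty)$ and define $x_n=x_n(t)$ as the unique solution of $d_n\bigl(1-\Phi(x_n)\bigr)=t$, so that $P(\max_{1\le j\le d_n}\zeta_j\le x_n)=(1-t/d_n)^{d_n}\to e^{-t}$ by independence. Standard Gaussian tail asymptotics $1-\Phi(x)\sim\phi(x)/x$ give $x_n=\sqrt{2\log d_n}(1+o(1))$, and the hypothesis $(\log d_n)^3/n\to c$ then yields $x_n^3/\sqrt{n}\to 2\sqrt{2c}$ while $x_n/\sqrt{n}\to 0$.

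Next, since $\|\xi_{ij}\|_{\psi_1}<\infty$ implies the Cram\'er condition $E[e^{h|\xi_{ij}|}]<\infty$ for some $h>0$, I would invoke the classical Cram\'er large-deviation expansion (see e.g.\ Theorem~1 of Chapter~VIII in Petrov, \emph{Sums of Independent Random Variables}, 1975): there exist $\delta>0$ and an analytic function $\lambda$ on $(-\delta,\delta)$ with $\lambda(0)=\gamma/6$ such that, uniformly in $0\le x\le\delta\sqrt{n}$,
\[
p_n(x):=P\!\left(\frac{1}{\sqrt{n}}\sum_{i=1}^n\xi_{i,1}>x\right)=\bigl(1-\Phi(x)\bigr)\exp\!\left(\frac{x^3}{\sqrt{n}}\lambda\!\left(\frac{x}{\sqrt{n}}\right)\right)\left(1+O\!\left(\frac{1+x}{\sqrt{n}}\right)\right).
\]
Substituting $x=x_n$ and using $\lambda(x_n/\sqrt{n})=\gamma/6+O(x_n/\sqrt{n})$ together with $x_n^3/\sqrt{n}\to 2\sqrt{2c}$ produces $p_n(x_n)/(1-\Phi(x_n))\to\lambda^*:=\exp(\gamma\sqrt{2c}/3)\in(0,1)$, strictly less than $1$ because $\gamma<0$. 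Consequently $d_n p_n(x_n)\to\lambda^* t$.

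Third, by the independence of the columns of the array $\xi$,
\[
P\!\left(\max_{1\le j\le d_n}\frac{1}{\sqrt{n}}\sum_{i=1}^n\xi_{ij}\le x_n\right)=\bigl(1-p_n(x_n)\bigr)^{d_n}\to e^{-\lambda^* t},
\]
which is strictly larger than $e^{-t}$. Since the Kolmogorov supremum dominates the discrepancy at $x=x_n$, this gives $\limsup_n\sup_x|\cdot|\ge|e^{-\lambda^* t}-e^{-t}|>0$, which is the desired conclusion.

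The main obstacle is controlling the Cram\'er expansion at the borderline moderate-deviation scale $x_n\asymp n^{1/6}$. One must verify simultaneously that the cubic exponent $x_n^3\lambda(x_n/\sqrt{n})/\sqrt{n}$ converges to a nondegenerate finite constant (using $x_n^3/\sqrt{n}\to 2\sqrt{2c}$, $x_n/\sqrt{n}\to 0$, and the analyticity of $\lambda$ near $0$) and that the multiplicative relative error $O((1+x_n)/\sqrt{n})=O(\sqrt{\log d_n/n})$ vanishes. Both follow cleanly from the scaling $x_n^2\sim 2\log d_n$ dictated by the hypothesis $(\log d_n)^3=O(n)$, but the borderline nature of the regime means the bounds must be traced carefully through the statement of the expansion, and it is essential that $(\log d_n)^3/n$ stays bounded so that the cubic exponent does not diverge.
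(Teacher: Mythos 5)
Your proposal is correct and follows essentially the same route as the paper: both pick a threshold $x_n\sim\sqrt{2\log d_n}$ (so $x_n\asymp n^{1/6}$), apply Petrov's Cram\'er-type expansion to get the tail ratio converging to $\exp(\gamma\sqrt{2c}/3)<1$, and compare the resulting limits of the two maximum distribution functions at $x_n$. The only cosmetic differences are that you calibrate $x_n$ by $d_n(1-\Phi(x_n))=t$ and compute the exact limits $e^{-\lambda^* t}$ versus $e^{-t}$, whereas the paper sets $\Phi(x_n)^{d_n}=e^{-1}$ and lower-bounds the gap via the inequality $(1+u)^{d_n}\geq 1+d_n u$.
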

The proof of Proposition \ref{opt-kolmogorov} is based on a Cram\'er type large deviation result, and it has already been mentioned (at least informally) in the literature; see e.g.~\cite{Hall2006} (see also Remark 1 in \cite{Chen2017}). 
Note that the proposition does not imply that the condition $\Theta_X^{4}(\log d)^3/n\to0$ is necessary because $\Theta_X$ is of order $\sqrt{\log d}$ under the assumptions of the proposition; see Example 2 in \cite{CCK2015}. 

\subsection{Comparison to \cite{KMB2019}'s results}

\cite{KMB2019} have established Gaussian approximation for $\max_{1\leq j\leq d}|S_{n,j}^X|$ under a variety of assumptions. 
In particular, their Theorem 3.2 reads as follows: Assume $X_1,\dots,X_n$ are i.i.d.~and have unit variances for simplicity. Then, for any $q\geq3$,
\begin{multline}\label{kmb-3.2}
\sup_{x\in\mathbb{R}}\left|P\left(\max_{1\leq j\leq d}|S_{n,j}^X|\leq x\right)-P\left(\max_{1\leq j\leq d}|Z_{n,j}^X|\leq x\right)\right|\\
\leq C\left(\frac{1}{2^{n}}+\mu\left(\frac{L_n^2\log^4d}{n}\right)^{1/6}+\mu\nu_q\frac{(\log d)^{1-1/q}}{n^{1/2-1/q}}\right).
\end{multline}
Here, $C$ is a universal constant and $\mu$ is the median of $\max_{1\leq j\leq d}|Z_{n,j}^X|$, while $L_n$ and $\nu_q$ are defined as follows: For every $i=1,\dots,n$, let $\zeta_i$ be the singed Borel measure on $\mathbb{R}^d$ defined by $\zeta_i(A)=P(X_i\in A)-P(Z_n^X\in A)$ for every Borel set $A$ in $\mathbb{R}^d$. Then we set
\begin{align*}
L_n&:=\frac{1}{n}\sum_{i=1}^n\max_{1\leq j\leq d}\int_{\mathbb{R}^d}|x_j|^3|\zeta_i|(dx),&
\nu_q&:=\left\{\frac{1}{n}\sum_{i=1}^n\int_{\mathbb{R}^d}\max_{1\leq j\leq d}|x_j|^q|\zeta_i|(dx)\right\}^{1/q},
\end{align*}
where $|\zeta_i|$ denotes the total variation measure of $\zeta_i$. 
%Note that the constant $\Phi_{AC,0}$ in their results roughly corresponds to our $\Theta_X$ and thus we replace it by $\sqrt{\log d}$ to make comparison possible. 
To make comparison to our result possible, we consider the worst case $\mu\asymp\sqrt{\log d}$ as in Corollary \ref{coro:nazarov} in the following. 
Then, under the assumptions of Theorem \ref{thm:main}\ref{thm:mom}, $\nu_q$ can be bounded as follows:
\begin{align*}
\nu_q^q\leq\frac{1}{n}\sum_{i=1}^n\expe{\|X_i\|_{\ell_\infty}^q+\|Z_n^X\|_{\ell_\infty}^q}
\leq D_n^q+\expe{\|Z_n^X\|_{\ell_\infty}^q}.
\end{align*}
Thus, up to a constant depending only on $q$, \eqref{kmb-3.2} can be reduced to the following form:
\begin{equation}\label{kmb-3.2-r}
\left(\frac{L_n^2\log^7d}{n}\right)^{1/6}+\frac{D_n(\log d)^{3/2-1/q}}{n^{1/2-1/q}}
+\frac{\expe{\|Z_n^X\|_{\ell_\infty}^3}^{1/3}(\log d)^{3/2-1/q}}{n^{1/2-1/q}}.
\end{equation}
On one hand, this bound always exhibits a better dependence on $n$ than our result because the latter contains a bound of the form
\[
\lpa\frac{D_n(\log d)^{3/2-1/q}}{n^{1/2-1/q}}\rpa^{2/3}.
\]
On the other hand, to ensure that the bounds converge to 0 as $n\to\infty$, our result imposes conditions on $d$ and $D_n$ no worse than \eqref{kmb-3.2-r} as long as both $L_n$ and $B_n$ do not increase with $n$. In particular, when $q\geq4$ and $D_n$ does not increase with $n$ as well, our bound converges to 0 as $n\to\infty$ if $\log d=o(n^{1/5})$, requiring a weaker condition than the one to ensure convergence of \eqref{kmb-3.2-r}.  

One clear advantage of \cite{KMB2019}'s results is that they are applicable to the situation where only the third moments of $X_i$ are finite. Indeed, their results can handle the case that only the $(2+\tau)$-th moments of $X_i$ are finite for some $\tau>0$; see discussions after their Theorem 3.1. 
Meanwhile, we remark that the difference between $L_n$ and $B_n$ could be significant in some statistical applications. For example, let us consider the following nonparametric regression with regular design:
\[
y_i=f(i/n)+\epsilon_i,\qquad i=1,\dots,n,
\] 
where $f:[0,1]\to\mathbb{R}$ and $\epsilon_1,\dots,\epsilon_n$ are centered i.i.d.~variables. 
Let us estimate the function $f$ by the following kernel estimator:
\[
\hat{f}_n(x)=\frac{1}{nh_n}\sum_{i=1}^ny_iK\lpa\frac{i/n-x}{h_n}\rpa,\qquad x\in[0,1],
\]
where $K:\mathbb{R}\to\mathbb{R}$ is a kernel function satisfying $\int_{-\infty}^\infty K(u)du=1$ and $h_n$ is a bandwidth parameter which tends to 0 as $n\to\infty$. For simplicity we assume $K$ is continuous and compactly supported. 
To construct uniform confidence bands for $f$ at the points $x_1,\dots,x_d\in[0,1]$, we wish to establish Gaussian approximation for the following quantity:
\[
\max_{1\leq j\leq d}\sqrt{nh_n}\left|\hat{f}_n(x_j)-\expe{\hat{f}_n(x_j)}\right|.
\]
For this purpose we can apply a high-dimensional CLT for $S_n^X$ with $X_{ij}=h_n^{-1/2}\epsilon_iK((i/n-x_j)/h_n)$. In this case, $L_n$ is typically of order $h_n^{-3/2}$, while
\[
\frac{1}{n}\sum_{i=1}^n\expe{X_{ij}^4}\approx\expe{\epsilon_1^4}\cdot h_n^{-2}\int_0^1 K\lpa\frac{y-x_j}{h_n}\rpa^4dy
\leq \expe{\epsilon_1^4}\cdot h_n^{-1}\int K(u)^4du,
\] 
so $B_n$ is of order $h_n^{-1/2}$ as long as $\expe{\epsilon_1^4}<\infty$. Note that $D_n$ is also of order $h_n^{-1/2}$ in this situation. 
See also Remark 4.8 of \cite{Koike2017stein} for another application of this type. 

%\cite{KMB2019} have also developed Gaussian approximation for $\max_{1\leq j\leq d}|S_{n,j}^X|$ under more general assumptions than those in our Theorem \ref{thm:main}\ref{thm:psi}. To state this result, we need to generalize the definition of $\|\cdot\|_{\psi_1}$ as follows: For $\alpha>0$ and a random variable $\xi$, we set
%\[
%\|\xi\|_{\psi_\alpha}:=\inf\{C>0:\expe{\psi_\alpha(|\xi|/C)}\leq1\},\qquad
%\text{where }\psi_\alpha(x):=e^{x^\alpha}-1.
%\]
%Then, under the current assumptions, Corollary 3.1 in \cite{KMB2019} reads as follows. Assume $\max_{1\leq j\leq d}\|X_{1j}\|_{\psi_1}\leq M_n$ for some $M_n\geq1$ and $0<\alpha\leq2$. If $\alpha\log(n/\log(ed))\geq3$, 
%\begin{multline}\label{kmb-3.3}
%\sup_{x\in\mathbb{R}}\left|P\left(\max_{1\leq j\leq d}|S_{n,j}^X|\leq x\right)-P\left(\max_{1\leq j\leq d}|Z_{n,j}^X|\leq x\right)\right|\\
%\leq C'\mu\left(\frac{L_n^2\log^4d}{n}\right)^{1/6}
%+C_\alpha\mu\frac{M_n(\log d)^{1+1/\alpha}}{n^{1/2}}\log^{1/\alpha}\lpa\frac{n}{\log(ed)}\rpa,
%\end{multline}
%where $C'$ is universal, $C_\alpha$ depends only on $\alpha$. Since $\alpha\log(n/\log(ed))\geq3$ implies $n\geq e^{3/\alpha}\log(ed)>2\log(ed)$, the above bound reduces to the following one up to a constant depending only on $\alpha$:
%\[
%\left(\frac{L_n^2\log^7d}{n}\right)^{1/6}+\sqrt{\frac{M_n^2(\log d)^{3+2/\alpha}(\log n)^2}{n}},
%\]
%where we apply the simplification $\mu\asymp\sqrt{\log d}$ as above. 
%Thus, for $\alpha=1$, our Corollary \ref{coro:nazarov} gives a better bound as long as $B_n$ and $L_n$ do not increase with $n$. It might be possible to derive 

We should also mention that \cite{KMB2019} also deal with the case that $X_{ij}$ are sub-Weibull for all $i,j$, which generalizes the assumptions of Theorem \ref{thm:main}\ref{thm:psi}; see their Corollary 3.1 for details. 
Moreover, they have also developed high-dimensional versions of non-uniform CLTs and Cram\'er type large deviations. These topics are beyond the scope of this paper.

\section{Bootstrap approximation}\label{sec:boot}

In terms of statistical applications, the Gaussian approximation results obtained in the previous section are infeasible unless the covariance matrix $\mf{C}_n^X$ is known for statisticians. Moreover, even if this is the case, the probability $P(Z_n^X\in A)$ is analytically intractable for a general set $A\in\mcl{A}^{\mathrm{re}}$. For these reasons, this section develops bootstrap approximation for $P(Z_n^X\in A)$ with $A\in\mcl{A}^{\mathrm{re}}$, following \cite{CCK2017}. 

%\subsection{Wild bootstrap}

Let $w=(w_i)_{i=1}^n$ be a sequence of independent random variables independent of $X$. 
We consider the wild bootstrap (also called the multiplier bootstrap) with multiplier variables $w$ as
\[
S_n^{\mathrm{WB}}=(S_{n,1}^{\mathrm{WB}},\dots,S_{n,d}^{\mathrm{WB}})^\top:=\frac{1}{\sqrt{n}}\sum_{i=1}^nw_i\lpa X_i-\bar{X}\rpa,
\]
where $\bar{X}:=n^{-1}\sum_{i=1}^nX_i$. We set
\[
\rho_n^{\mathrm{WB}}(\mcl{A}^{\mathrm{re}}):=\sup_{A\in\mathcal{A}^{\mathrm{re}}}\labs P\lpa S_n^{\mathrm{WB}}\in A\mid X\rpa-P\lpa Z_n^X\in A\rpa\rabs.
\]
\begin{theorem}\label{thm:wild}
Suppose that $\expe{w_i}=0$ and $\expe{w_i^2}=1$ for every $i$. Suppose also that there is a constant $b\geq1$ such that $|w_i|\leq b$ a.s.~for every $i$. Then the following statements hold true:
\begin{enumerate}[label=(\alph*)]

\item\label{thm:wild-psi} Under the assumptions of Theorem \ref{thm:main}\ref{thm:psi}, there is a universal constant $C'>0$ such that
\begin{equation*}%\label{eq:kol-psi-1}
\expe{\rho_n^{\mathrm{WB}}(\mcl{A}^{\mathrm{re}})}
\leq C'\Theta_X^{2/3}\left((b^2\delta_{n,1})^{1/6}+(b^2\delta_{n,2})^{1/3}\right).
\end{equation*}

\item\label{thm:wild-mom} Under the assumptions of Theorem \ref{thm:main}\ref{thm:mom}, there is a constant $K_q'>0$ depending only on $q$ such that
\begin{equation*}%\label{eq:kol-psi-1}
\expe{\rho_n^{\mathrm{WB}}(\mcl{A}^{\mathrm{re}})}
\leq K_q'\Theta_X^{2/3}\left((b^2\delta_{n,1})^{1/6}+(b^2\delta_{n,2}(q))^{1/3}\right).
\end{equation*}

\end{enumerate}
\end{theorem}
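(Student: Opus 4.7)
The plan is to follow the familiar two-step decomposition of \cite{CCK2017}: conditionally on $X$, invoke Theorem \ref{thm:main} to compare $S_n^{\mathrm{WB}}$ with a Gaussian vector having the \emph{sample} covariance, and then bridge this Gaussian with $Z_n^X$ via a Gaussian-to-Gaussian comparison in hyperrectangles driven by $\|\hat{\mf{C}}_n-\mf{C}_n^X\|_\infty$, where $\hat{\mf{C}}_n:=n^{-1}\sum_{i=1}^n(X_i-\bar X)(X_i-\bar X)^\top$ is the sample covariance. Concretely, build a centered Gaussian vector $\tilde Z_n\sim N(0,\hat{\mf{C}}_n)$ conditional on $X$ (on an enlarged probability space), and write
\[
\rho_n^{\mathrm{WB}}(\mcl{A}^{\mathrm{re}})\leq R_1+R_2,\qquad R_1:=\sup_{A}|P(S_n^{\mathrm{WB}}\in A\mid X)-P(\tilde Z_n\in A\mid X)|,
\]
with $R_2:=\sup_{A}|P(\tilde Z_n\in A\mid X)-P(Z_n^X\in A)|$.

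For $R_1$, observe that, conditionally on $X$, the vectors $Y_i:=w_i(X_i-\bar X)$ are independent, centered because $\expe{w_i}=0$, and have conditional covariance $(X_i-\bar X)(X_i-\bar X)^\top$ since $\expe{w_i^2}=1$; thus $S_n^{\mathrm{WB}}=n^{-1/2}\sum_i Y_i$ has conditional covariance $\hat{\mf{C}}_n$, matching that of $\tilde Z_n$. Apply Theorem \ref{thm:main} conditionally on $X$, with $Y_i$ playing the role of $X_i$ and $\tilde Z_n$ that of $Z_n^X$. Because $|w_i|\leq b$ a.s., we have $\|Y_{ij}\|_{\psi_1\mid X}\leq b|X_{ij}-\bar X_j|/\log 2$ and $n^{-1}\sum_i\expe{Y_{ij}^4\mid X}\leq b^2 n^{-1}\sum_i(X_{ij}-\bar X_j)^4$ (and similarly for $L^q$ in part \ref{thm:wild-mom}), which is what produces the factor $b^2$ inside $\delta_{n,1}$ and $\delta_{n,2}$. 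Taking expectation over $X$ and using standard moment bounds (Jensen, sub-exponential maximal inequalities, and the fact that recentering by $\bar X$ changes the $\psi_1$-norm only by constants) yields the desired order for $\expe{R_1}$; the anti-concentration parameter $\Theta_{\tilde Z_n}$ is controlled by Nazarov's inequality (Lemma \ref{lemma:nazarov}) applied to $\hat{\mf{C}}_n$, after showing via concentration of the diagonal entries of $\hat{\mf{C}}_n$ about those of $\mf{C}_n^X$ that $\Theta_{\tilde Z_n}\lesssim \Theta_X$ on a high-probability event.

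For $R_2$, use a Gaussian-to-Gaussian comparison of the form
\[
\sup_{A\in\mcl{A}^{\mathrm{re}}}|P(Y_0\in A)-P(Y_1\in A)|\lesssim \Theta^{2/3}\|\Sigma_0-\Sigma_1\|_\infty^{1/3}
\]
for centered Gaussian $Y_i\sim N(0,\Sigma_i)$, which follows by smoothing the indicator of a hyperrectangle at a scale $\varepsilon$ (using a soft-max approximation), controlling the smoothing error by $\Theta\varepsilon$ via the definition of $\mcl{C}_{Z_n^X}$, bounding the difference of the smoothed probabilities through a Gaussian interpolation whose second-derivative factor is $O(\varepsilon^{-2}\|\Sigma_0-\Sigma_1\|_\infty)$, and optimizing in $\varepsilon$. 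Applied with $\Sigma_0=\hat{\mf{C}}_n$ and $\Sigma_1=\mf{C}_n^X$, this gives $R_2\lesssim \Theta_X^{2/3}\|\hat{\mf{C}}_n-\mf{C}_n^X\|_\infty^{1/3}$, and a standard sub-exponential (resp.\ $L^{q/2}$) maximal inequality yields $\expe{\|\hat{\mf{C}}_n-\mf{C}_n^X\|_\infty}\lesssim B_n^2\{(\log d)/n\}^{1/2}$ (resp.\ an $L^{q/2}$ analog) which, by Jensen, is of the same order as the $R_1$ contribution up to constants.

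The main obstacle is the careful bookkeeping at the two places where the anti-concentration quantity must be pinned to $\Theta_X$, not to a sample-covariance analog: one needs to handle the low-probability event on which $\hat{\mf{C}}_n$ is far from $\mf{C}_n^X$ (so that $\Theta_{\tilde Z_n}$ can no longer be controlled by $\Theta_X$) by a crude bound such as $R_1+R_2\leq 2$, and to choose the threshold and the polynomial-in-$\log d$ factors in the Gaussian comparison so that the resulting tail contribution is absorbed into the target rate $(b^2\delta_{n,1})^{1/6}+(b^2\delta_{n,2})^{1/3}$. The remaining routine issue, under part \ref{thm:wild-psi}, is tracking the extra $\log n$ that arises from bounding $\psi_1$-norms of maxima of independent sub-exponentials; this is precisely what makes the $\delta_{n,2}$ term necessary in the final bound.
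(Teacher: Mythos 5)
Your decomposition through a conditional Gaussian $\tilde Z_n\sim N(0,\hat{\mf{C}}_n)$, $\hat{\mf{C}}_n$ the sample covariance, has a genuine gap at exactly the point you describe as bookkeeping: the anti-concentration constant of $\tilde Z_n$ cannot be pinned to $\Theta_X$ by the means you propose. Applying Nazarov's inequality (Lemma \ref{lemma:nazarov}) to $\hat{\mf{C}}_n$ together with concentration of its diagonal entries only gives $\Theta_{\tilde Z_n}\lesssim\sqrt{\log d}/\ul{\sigma}$ on a high-probability event, and this is \emph{not} $\lesssim\Theta_X$ in general: in the common-factor situation of Lemma \ref{lemma:factor} and Corollary \ref{coro:factor}, $\Theta_X$ is dimension-free while $\sqrt{\log d}/\ul{\sigma}$ grows with $d$. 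Nor can entrywise closeness of $\hat{\mf{C}}_n$ to $\mf{C}_n^X$ rescue the claim, because $\Theta_{\tilde Z_n}=\sup_{\ve>0}\ve^{-1}\mcl{C}_{\tilde Z_n}(\ve)$ is a supremum over \emph{all} scales: a Gaussian-to-Gaussian comparison only yields $\mcl{C}_{\tilde Z_n}(\ve)\leq\mcl{C}_{Z_n^X}(c\ve)+\eta$ with an additive error $\eta$ that does not vanish as $\ve\to0$, so dividing by $\ve$ and taking the supremum blows up. Hence your route proves at best the Nazarov-flavoured Corollary \ref{coro:wild}, with $(\log d)^{1/3}/\ul{\sigma}^{2/3}$ in place of $\Theta_X^{2/3}$, not Theorem \ref{thm:wild} as stated. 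A second, quantitative problem is the black-box conditional use of Theorem \ref{thm:main}\ref{thm:psi}: conditionally on $X$ the constant playing the role of $B_n$ must dominate $\max_{i,j}\|w_i(X_{ij}-\bar X_j)\|_{\psi_1}$ given $X$, i.e.\ the random quantity $b\max_{i,j}|X_{ij}-\bar X_j|$, typically of size $bB_n\log(dn)$; the conditional $\delta_{n,1}$ then becomes of order $b^2B_n^2(\log dn)^2(\log d)^3/n$, whose sixth root carries an extra factor $(\log dn)^{1/3}$ that is not absorbed by $(b^2\delta_{n,1})^{1/6}+(b^2\delta_{n,2})^{1/3}$ (it is harmless only at the cruder $(\log dn)^5$ level of Corollary \ref{coro:wild}).

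The paper avoids both problems by never introducing a sample-covariance Gaussian and never invoking Theorem \ref{thm:main} conditionally. It first truncates $X$ at $\kappa_n=2B_n\log n$ (so all conditional moments are controlled by deterministic quantities), then applies the fundamental Lemma \ref{coupling} conditionally on $X$ to $w\wt{Y}^\diamond$ with the comparison covariance chosen as $\mf{C}=\expectation[S_n^{X^\diamond}(S_n^{X^\diamond})^\top]$, so the comparison Gaussian is already $Z_n^X$ (in its $\diamond$ form). The mismatch between the bootstrap's conditional covariance and $\mf{C}_n^X$ enters only through the term $\Delta_{n,0}^*$, which is bounded in expectation via Nemirovski's inequality (Lemma \ref{lemma:nemirovski}) and Lemma \ref{lemma:bounded}; anti-concentration is then used exactly once, for $Z_n^X$ at the single chosen scale $\ve$, through Lemma \ref{cck-kolmogorov} and $\mcl{C}_{Z_n^X}(6\ve)\leq6\Theta_X\ve$. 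If you want to keep your two-step decomposition, you would have to re-run the proof of Theorem \ref{thm:main} conditionally so that anti-concentration is always applied to $Z_n^X$ at a fixed scale rather than to $\tilde Z_n$ through $\Theta_{\tilde Z_n}$ --- which is precisely what Lemma \ref{coupling} with a general $\mf{C}$ already packages.
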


Theorem \ref{thm:wild} and Lemma \ref{lemma:nazarov} yield the following counterpart of Corollary \ref{coro:nazarov}. 
\begin{corollary}\label{coro:wild}
Assume $\ul{\sigma}:=\min_{1\leq j\leq d}\|S^X_{n,j}\|_2>0$. 
Then, under the assumptions of Theorem \ref{thm:wild}\ref{thm:wild-psi},  there is a universal constant $C'>0$ such that 
\begin{equation}\label{eq:wild-psi}
\expe{\rho_n^{\mathrm{WB}}(\mcl{A}^{\mathrm{re}})}
\leq \frac{C'}{\ul{\sigma}^{2/3}}\left(\frac{b^2B_n^2(\log dn)^5}{n}\right)^{1/6}.
\end{equation} 
Also, under the assumptions of Theorem \ref{thm:wild}\ref{thm:wild-mom}, there is a constant $K'_q>0$ depending only on $q$ such that
\begin{equation}\label{eq:wild-mom}
\expe{\rho_n^{\mathrm{WB}}(\mcl{A}^{\mathrm{re}})}
\leq \frac{K'_q}{\ul{\sigma}^{2/3}}\left\{\lpa\frac{b^2B_n^2(\log d)^5}{n}\rpa^{1/6}+\lpa\frac{b^2D_n^2(\log d)^{3-2/q}}{n^{1-2/q}}\rpa^{1/3}\right\}.
\end{equation}
\end{corollary}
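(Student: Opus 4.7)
The plan is to combine Theorem \ref{thm:wild} with Nazarov's inequality, in direct parallel to the way Corollary \ref{coro:nazarov} is derived from Theorem \ref{thm:main} and Lemma \ref{lemma:nazarov}. First I would apply Lemma \ref{lemma:nazarov} to the centered Gaussian vector $Z_n^X$: since $\|Z_{n,j}^X\|_2=\|S_{n,j}^X\|_2$ for each $j$ by construction, the hypothesis $\ul\sigma>0$ makes the lemma applicable and yields
\[
\mcl C_{Z_n^X}(\ve)\leq\frac{\ve}{\ul\sigma}(\sqrt{2\log d}+2),\qquad\ve>0.
\]
Hence $\Theta_X\leq(\sqrt{2\log d}+2)/\ul\sigma\lesssim\sqrt{\log d}/\ul\sigma$ thanks to $d\geq3$, so $\Theta_X^{2/3}\lesssim(\log d)^{1/3}/\ul\sigma^{2/3}$.

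For part \ref{thm:wild-psi}, I would substitute this bound into Theorem \ref{thm:wild}\ref{thm:wild-psi}. The first summand expands to
\[
\Theta_X^{2/3}(b^2\delta_{n,1})^{1/6}\lesssim\frac{1}{\ul\sigma^{2/3}}\lpa\frac{b^2B_n^2(\log d)^5}{n}\rpa^{1/6},
\]
which is at most the target right-hand side of \eqref{eq:wild-psi} via $\log d\leq\log(dn)$. The second summand expands to
\[
\Theta_X^{2/3}(b^2\delta_{n,2})^{1/3}\lesssim\frac{1}{\ul\sigma^{2/3}}\lpa\frac{b^2B_n^2(\log d)^3(\log n)^2}{n}\rpa^{1/3}.
\]
Using $(\log d)^3(\log n)^2\leq(\log dn)^5$, this is bounded by $\ul\sigma^{-2/3}(b^2B_n^2(\log dn)^5/n)^{1/3}$. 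When $b^2B_n^2(\log dn)^5/n\leq1$, the cube root is dominated by the sixth root, matching \eqref{eq:wild-psi}; in the opposite regime I would fall back on the trivial estimate $\rho_n^{\mathrm{WB}}(\mcl A^{\mathrm{re}})\leq 1$, with a sufficiently enlarged universal constant $C'$ absorbing the resulting slack.

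Part \ref{thm:wild-mom} is handled identically once $\delta_{n,2}$ is replaced by $\delta_{n,2}(q)$. The Nazarov substitution gives
\[
\Theta_X^{2/3}(b^2\delta_{n,2}(q))^{1/3}\lesssim\frac{1}{\ul\sigma^{2/3}}\lpa\frac{b^2D_n^2(\log d)^{3-2/q}}{n^{1-2/q}}\rpa^{1/3},
\]
which is already the second summand of \eqref{eq:wild-mom}, while the first summand arises from the $\delta_{n,1}$ term as in part \ref{thm:wild-psi}.

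The entire argument is essentially algebraic substitution, so no serious conceptual obstacle arises. The only mildly delicate point is reconciling the mixed $(\log d)^3(\log n)^2$ factor produced by $\delta_{n,2}$ with the cleaner $(\log dn)^5$ appearing in \eqref{eq:wild-psi}; the case split described above is the standard device for resolving this, and it is exactly analogous to the corresponding manipulation implicit in the derivation of Corollary \ref{coro:nazarov}.
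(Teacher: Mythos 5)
Your route is exactly the paper's intended one (the paper itself offers no details beyond ``Theorem \ref{thm:wild} and Lemma \ref{lemma:nazarov} yield\dots''): bound $\Theta_X\lesssim\sqrt{\log d}/\ul{\sigma}$ via Nazarov and substitute into Theorem \ref{thm:wild}. Your algebra for the $\delta_{n,1}$ terms and for the whole of part \ref{thm:wild-mom} is correct, and you are right that the only delicate point is the $\delta_{n,2}$ term in part \ref{thm:wild-psi}.

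However, your resolution of that point has a genuine gap. In the regime $x:=b^2B_n^2(\log dn)^5/n>1$ you invoke $\expe{\rho_n^{\mathrm{WB}}(\mcl{A}^{\mathrm{re}})}\leq1$ and claim an enlarged \emph{universal} constant absorbs the slack; but the trivial bound only yields \eqref{eq:wild-psi} if its right-hand side, $C'\ul{\sigma}^{-2/3}x^{1/6}$, is itself bounded below by $1$, and this is not guaranteed because $\ul{\sigma}$ need not be $O(1)$. The only a priori control is $\ul{\sigma}^2\leq B_n$ (Lyapunov from $\max_j n^{-1}\sum_i\expe{X_{ij}^4}\leq B_n^2$), which gives $\ul{\sigma}^{-2/3}x^{1/6}\geq b^{1/3}(\log dn)^{5/6}n^{-1/6}$ — a quantity that can tend to $0$. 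Concretely, take $X_{ij}=n^{0.45}\xi_{ij}$ with $\xi_{ij}$ i.i.d.\ Rademacher: then $\ul{\sigma}^2=B_n\asymp n^{0.9}$, one is in the regime $x>1$, the right-hand side of \eqref{eq:wild-psi} is of order $n^{-1/6}(\log dn)^{5/6}\to0$, and neither the trivial estimate nor the direct substitution (whose second term after Nazarov is of order $n^{-1/30}(\log n)^{2/3}$, much larger) delivers the claim. The slack therefore depends on $B_n,\ul{\sigma},n$ and cannot be hidden in a universal constant; closing this case needs an extra idea, e.g.\ a scaling/standardization reduction of $X$ before applying the theorem, or an additional normalization such as $\ul{\sigma}\lesssim1$. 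To be fair, the paper's one-line derivation glosses over exactly the same corner case, and your write-up is otherwise faithful to it — but as stated your case split does not prove \eqref{eq:wild-psi} in that regime.
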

It is of course possible to derive a bootstrap counterpart of Corollary \ref{coro:factor}. We omit the precise statement. 

%\begin{rmk}
%The proof of Theorem \ref{thm:wild} implies that the results continue to hold true when we replace $X_i-\bar{X}$ by $X_i$ in the definition of $S_n^{wX}$.
%\end{rmk}

\begin{rmk}[Relation to \cite{CCK2017}]\label{rmk:cck-boot}
\citet{CCK2017} have established similar results to Corollary \ref{coro:wild} when $w$ is Gaussian. Indeed, they have derived stronger results that the probabilities of $\rho_n^{\mathrm{WB}}(\mcl{A}^{\mathrm{re}}(d))$ exceeding the right hand sides of \eqref{eq:wild-psi} or \eqref{eq:wild-mom} are small; see Proposition 4.1 in \cite{CCK2017} for details. It is presumably possible to obtain similar results in our case by showing that the variables appearing in the right side of \eqref{coupling-boot}  concentrate at their expectations. %We omit the details.  
%(b) \citet{CCK2017} have also derived analogous results for Efron's empirical bootstrap (see their Proposition 4.3). Since their proofs rely on  
\end{rmk}

\begin{rmk}[Relation to \cite{DZ2020}]\label{rmk:DZ2020}
\citet{DZ2020} have developed analogous results to Corollary \ref{coro:wild} for 
\[
\sup_{A\in\mathcal{A}^m}\labs P\lpa S_n^{\mathrm{WB}}\in A\mid X\rpa-P\lpa S_n^X\in A\rpa\rabs
\]
instead of $\rho_n^{\mathrm{WB}}(\mcl{A}^{\mathrm{re}})$ and assuming $w_i$ are i.i.d.~and sub-Gaussian (rather than bounded) but satisfies $\expe{w_i^3}=1$. 
In particular, under the assumptions of Theorem \ref{thm:main}\ref{thm:psi}, we can derive the following bound from their Corollary 1(i):
\begin{equation}\label{dz-1}
\ex{\sup_{A\in\mathcal{A}^m}\labs P\lpa S_n^{\mathrm{WB}}\in A\mid X\rpa-P\lpa S_n^X\in A\rpa\rabs}
\leq \frac{C^*}{\ol{\sigma}^{2/3}}\left(\frac{B_n^2(\log dn)^5}{n}\right)^{1/6},
\end{equation}
where $C^*$ depends only on the sub-Gaussian parameter of $w_1$ and
\[
\ol{\sigma}:=\min_{1\leq j\leq d}\frac{2+\sqrt{2\log d}}{1/\sigma_{(1)}+(1+\sqrt{2\log j})/\sigma_{(j)}}
\]
with $\sigma_{(1)}\leq\cdots\leq\sigma_{(d)}$ being the ordered diagonal entries of $\mf{C}^X_n$. 
We can also derive the following bound from their Corollary 3(ii) under the assumptions of Theorem \ref{thm:main}\ref{thm:mom} with taking $\delta=\lpa\frac{B_n^2(\log dn)^5}{\ol{\sigma}^4n}\rpa^{1/6}$ in their result (note that $B_n^2/\ol{\sigma}^4\geq1$ by Jensen's inequality):
\begin{equation}\label{dz-2}
\ex{\sup_{A\in\mathcal{A}^m}\labs P\lpa S_n^{\mathrm{WB}}\in A\mid X\rpa-P\lpa S_n^X\in A\rpa\rabs}
\leq \frac{K_q^*}{\ol{\sigma}^{2/3}}\lpa\frac{B_n^2(\log dn)^5}{n}\rpa^{1/6},
\end{equation}
where $K_q^*$ depends only on $q$ and the sub-Gaussian parameter of $w_1$.  
Since Theorem 10 in \cite{DZ2020} implies $\Theta_X\lesssim\ol{\sigma}^{-1}\sqrt{\log d}$, our Theorems \ref{thm:main}\ref{thm:psi} and \ref{thm:wild}\ref{thm:wild-psi} lead to a bound similar to \eqref{dz-1}. 
In the meantime, the bound obtained from our Theorems \ref{thm:main}\ref{thm:mom} and \ref{thm:wild}\ref{thm:wild-mom} is worse than \eqref{dz-2} due to the presence of the second term.  
%\eqref{dz-2} always has better dependences on $n$ and $D_n$ because ${\frac{q}{2(q+1)}>\frac{1}{3}$ for $q>2$, while our result may allow larger values for $d$ to ensure the convergence of the bounds as $n\to\infty$. For instance, if $B_n,D_n$ and $\ol{\sigma}$ do not depend on $n$ and $q<4$, we need $\log d=o(n^{1/3-2/(3q)})$ to ensure convergence of the bound in  
The advantage of our results over \cite{DZ2020}'s ones is that we do not need to assume $\expe{w_i^3}=1$. 
We note that they have indeed established stronger estimates as the one stated in Remark \ref{rmk:cck-boot} and allow $w_i$ to be sub-Gaussian (rather than bounded). In this paper, we do not pursue such generalization for simplicity.  
%see their Corollaries 1 and 3. They have indeed established stronger estimates as the one stated in Remark \ref{rmk:cck-boot}. Also, they allow $w$ to be sub-Gaussian (rather than bounded) and their estimates seem to be slightly sharper than ours. Meanwhile, unlike \cite{DZ2020}, we do not impose the condition $\expe{w_i^3}=1$. 
\end{rmk}

\begin{rmk}[Empirical bootstrap]
It seems difficult to derive a result comparable to Theorem \ref{thm:wild} for Efron's empirical bootstrap using our proof technique. This is because we need to bound a quantity of the form $\expe{\sum_{i=1}^n\max_{1\leq j\leq d}X_{ij}^4}$ rather than $\expe{\max_{1\leq j\leq d}\sum_{i=1}^nX_{ij}^4}$ while we apply our key Lemma \ref{coupling} in order to derive such a result for Efron's empirical bootstrap. 
We shall remark that this issue has also been pointed out in \cite{DZ2020} (see discussions after Theorem 2 of the paper). 
\end{rmk}

\section{Fundamental lemma}\label{sec:lemma}

The basic strategy for the proofs of the main results is the same as the one used in the proof of \cite[Theorem 2.2]{CCK2013}, which is based on \cite[Theorem 2.1]{CCK2013} and an anti-concentration inequality. 
Here, since we do not explicitly bound the quantity $\Theta_X$, we need to establish only a counterpart of the former. 
This part is the main technical development of this paper and the result is given as follows:
%
%The following lemma, which is a variant of \cite[Theorem 3.1]{CCK2016}, is a fundamental tool to prove our main results:
\begin{lemma}\label{coupling}
Let $Z$ be a centered Gaussian vector in $\mathbb{R}^d$ with covariance matrix $\mf{C}=(\mf{C}_{jk})_{1\leq j,k\leq d}$. 
Suppose that $\expe{X_{ij}^4}<\infty$ for all $i,j$. 
Then, there is a universal constant $C>0$ such that
\begin{multline}\label{eq:coupling}
P\lpa\max_{1\leq j\leq d}(S_{n,j}^X-y_j)\in A\rpa
\leq P\lpa\max_{1\leq j\leq d}(Z_j-y_j)\in A^{5\varepsilon}\rpa\\
+C\left\{\varepsilon^{-2}\lpa\Delta^X_{n,0}\log d+\Delta^X_{n,1}\sqrt{\frac{(\log d)^3}{n}}\rpa
+\varepsilon^{-4}\Delta^X_{n,2}(\varepsilon)\frac{(\log d)^3}{n}\right\}
\end{multline}
for any $y\in\mathbb{R}^d$, $\varepsilon>0$ and $A\in\mcl{B}(\mathbb{R})$, where
\begin{align*}
\Delta^X_{n,0}&:=\max_{1\leq j,k\leq d}\labs\frac{1}{n}\sum_{i=1}^n\expe{X_{ij}X_{ik}}-\mf{C}_{jk}\rabs,&
\Delta^X_{n,1}&:=\sqrt{\frac{1}{n}\ex{\max_{1\leq j\leq d}\sum_{i=1}^nX_{ij}^4}}
\end{align*}
and
\begin{align*}
\Delta^X_{n,2}(\varepsilon)&:=\frac{1}{n}\sum_{i=1}^n\ex{\|X_{i}\|_{\ell_\infty}^4;\|X_{i}\|_{\ell_\infty}>\sqrt{n}\varepsilon/(3\log d)}.
\end{align*}
\end{lemma}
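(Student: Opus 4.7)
The plan is to adapt the smoothing-plus-Lindeberg argument behind \cite[Theorem 2.1]{CCK2013}, but to be careful enough in the third-order step to produce the three-term decomposition $\Delta^X_{n,0},\Delta^X_{n,1},\Delta^X_{n,2}(\varepsilon)$ rather than a single crude remainder. First I would set $\beta=\log(d)/\varepsilon$ and work with the log-sum-exp smooth maximum $F_\beta(x)=\beta^{-1}\log\sum_{j=1}^d e^{\beta x_j}$, which satisfies $\max_j x_j\le F_\beta(x)\le \max_j x_j+\varepsilon$ and, since $\partial_{jk}F_\beta=\beta(p_j\delta_{jk}-p_jp_k)$ with $p_j=e^{\beta x_j}/\sum_k e^{\beta x_k}$, the ``trace-type'' bounds $\sum_j|\partial_j F_\beta|=1$, $\sum_{jk}|\partial_{jk}F_\beta|\lesssim \beta$, $\sum_{jkl}|\partial_{jkl}F_\beta|\lesssim \beta^2$. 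Composing with a smooth cutoff $g$ with $\mathbf 1_{A^{\varepsilon}}\le g\le \mathbf 1_{A^{4\varepsilon}}$ and $\|g^{(r)}\|_\infty\lesssim \varepsilon^{-r}$ for $r\le 3$, and writing $\varphi(x):=g(F_\beta(x-y))$, one has $\mathbf 1\{\max_j(x_j-y_j)\in A\}\le \varphi(x)\le \mathbf 1\{\max_j(x_j-y_j)\in A^{5\varepsilon}\}$. The chain rule then gives $\sum_{jk}\|\partial_{jk}\varphi\|_\infty\lesssim \varepsilon^{-2}\log d$ and $\sum_{jkl}\|\partial_{jkl}\varphi\|_\infty\lesssim \varepsilon^{-3}(\log d)^2$. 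The task reduces to bounding $\expectation[\varphi(S_n^X)]-\expectation[\varphi(Z)]$.

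Second, I would split this difference via an intermediate Gaussian: let $\tilde Z_1,\dots,\tilde Z_n$ be independent with $\tilde Z_i\sim N(0,\expectation[X_iX_i^\top])$ and $W:=n^{-1/2}\sum_i\tilde Z_i\sim N(0,\mf C_n^X)$. The passage $W\rightsquigarrow Z$ is handled by the standard Gaussian smart-path $U(s)=\sqrt{s}\,W+\sqrt{1-s}\,Z$: differentiating $\expectation\varphi(U(s))$ in $s$ and using Gaussian integration by parts produces an error bounded by $(\max_{jk}|\mf C_{n,jk}^X-\mf C_{jk}|)\sum_{jk}\|\partial_{jk}\varphi\|_\infty$, which is precisely the first term $\varepsilon^{-2}\Delta^X_{n,0}\log d$ of \eqref{eq:coupling}.

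Third, the core quantity $\expectation\varphi(S_n^X)-\expectation\varphi(W)$ is attacked by a Lindeberg swap along $V_i=n^{-1/2}(X_1+\dots+X_i+\tilde Z_{i+1}+\dots+\tilde Z_n)$. Taylor-expanding each swap to third order about the leave-one-out partial sum, the first-order terms vanish by centering and the second-order terms vanish because $X_i$ and $\tilde Z_i$ share covariance; only the third-order remainder survives. I would now truncate: set $M:=\sqrt n\,\varepsilon/(3\log d)$ and write $X_i=X_i'+X_i''$ where $X_i''=X_i\mathbf 1_{\{\|X_i\|_\infty>M\}}$. On the tail part, using $\sum_{jkl}\|\partial_{jkl}\varphi\|_\infty\lesssim \varepsilon^{-3}(\log d)^2$ and the bookkeeping $\|X_i''\|_\infty^3\le \|X_i''\|_\infty^4/M$, the sum over $i$ delivers $\varepsilon^{-4}\Delta^X_{n,2}(\varepsilon)(\log d)^3/n$, the third term of \eqref{eq:coupling}. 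On the truncated part I would avoid the naive third-derivative bound (which would cost a spurious factor $\log d$) and instead apply a further second-order Taylor expansion conditional on the swap, thereby carrying one factor of $X_i'$ \emph{inside} a second-order derivative of $\varphi$; Cauchy--Schwarz in $i$ then pairs the remaining quadratic factor with the aggregated fourth-moment quantity $\expectation[\max_j\sum_i X_{ij}^4]=n(\Delta^X_{n,1})^2$, producing the middle term $\varepsilon^{-2}\Delta^X_{n,1}\sqrt{(\log d)^3/n}$.

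The main obstacle, which is where the improvement over \cite{CCK2017} is actually purchased, is the last estimate on the truncated third-order remainder. A straightforward third-derivative bound brings in the factor $\beta^2=(\log d)^2/\varepsilon^2$ together with a term like $\sum_i \expectation\|X_i\|_\infty^3$, which cannot be re-expressed in terms of $\expectation[\max_j\sum_i X_{ij}^4]$ (the two quantities differ in the order of max and sum). The fix is to perform the Lindeberg swap in a second-order-then-Taylor-once form that consumes only one ``dangerous'' factor of $X_i'$ via $\partial^2\varphi$ and keeps the remaining $X_i'X_i'$ under the square root via Cauchy--Schwarz; together with the trace-type estimate $\sum_{jk}|\partial_{jk}\varphi|\lesssim \beta/\varepsilon$, this converts the per-swap bound into a form that aggregates across $i$ into exactly $\Delta^X_{n,1}$. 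Verifying that this identity produces no hidden $\log d$ loss, and that the cross terms cancel against their Gaussian counterparts, is the delicate step that the rest of the write-up would make precise.
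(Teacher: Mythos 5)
Your overall architecture (smooth max plus smooth indicator with $\beta=\varepsilon^{-1}\log d$, an intermediate Gaussian to isolate $\Delta^X_{n,0}$, truncation at $\sqrt{n}\varepsilon/(3\log d)$ to produce $\Delta^X_{n,2}(\varepsilon)$) is sound, and your tail bookkeeping is correct. The gap is precisely the step you flag as delicate: the truncated third-order remainder of a direct Lindeberg swap from $X_i$ to a Gaussian $\tilde Z_i$ with matching covariance. After the swap, independence factorizes that remainder into terms of the form $\expe{\partial_{jkl}\varphi(\hat V_i)}\,\expe{X'_{ij}X'_{ik}X'_{il}}$, and the third moments of the truncated $X_i$ do not vanish; no rearrangement (``second-order-then-Taylor-once'', carrying one factor inside $\partial^2\varphi$) changes the fact that what remains is a genuinely third-order quantity. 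Its generic size is $n^{-3/2}\varepsilon^{-3}(\log d)^2\max_j\sum_i\expe{|X_{ij}|^3}$, i.e.\ of order $\varepsilon^{-3}(\log d)^2n^{-1/2}$ under bounded fourth moments, which exceeds your claimed middle term $\varepsilon^{-2}\Delta^X_{n,1}\sqrt{(\log d)^3/n}$ by a factor of order $\varepsilon^{-1}\sqrt{\log d}\to\infty$. Cauchy--Schwarz in $i$ cannot bridge this, because the obstruction here is not the ordering of max and sum but the non-cancellation of third moments against the Gaussian; optimizing the bound your route actually yields brings you back to a $(\log d)^7$-type (indeed worse) rate, not the improvement the lemma encodes.

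The paper supplies exactly the missing cancellation mechanism, and it differs from your plan in two essential ways. First, instead of swapping $X$ directly against Gaussians, it swaps $X$ against the wild bootstrap $Y_i=\xi_iX_i$, where the $\xi_i$ are i.i.d., bounded, with $\expe{\xi_i}=0$ and $\expe{\xi_i^2}=\expe{\xi_i^3}=1$ (an affine transform of a Beta$(1/2,3/2)$ variable); then all mixed moments up to order three match, the Lindeberg remainder is fourth order, and a randomized Lindeberg argument (averaging over the order of the swaps with a Bernoulli device, following \cite{DZ2017}) makes the fourth-order term aggregate as $\max_j\sum_i\expe{X_{ij}^4}=n(\Delta^X_{n,1})^2$ rather than $\sum_i\expe{\max_jX_{ij}^4}$; multiplied by $\varepsilon^{-4}(\log d)^3n^{-2}$, this is absorbed into the stated bound after assuming without loss of generality that $\varepsilon^{-2}\Delta^X_{n,1}\sqrt{(\log d)^3/n}\leq1$. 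Second, the comparison of $S_n^{Y}$ with $Z$ is performed conditionally on $X$ by a Stein-kernel argument for the multipliers: each $\xi_i$ has a bounded Stein kernel $\tau_i$, so $S_n^Y$ given $X$ has Stein kernel $n^{-1}\sum_iX_{ij}X_{ik}\tau_i(\xi_i)$, and Lemma 4.1 of \cite{Koike2019dejong} combined with Hoeffding's and Nemirovski's maximal inequalities bounds the resulting discrepancy by $\Delta^X_{n,0}+\sqrt{\log d/n}\,\Delta^X_{n,1}$ at the cost of second derivatives only, i.e.\ a factor $\varepsilon^{-2}\log d$ --- this is where the middle term really comes from. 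To complete your write-up you would need to replace your delicate step by an argument of this type (third-moment matching plus a second-order comparison for the bootstrapped sum), or by some other device that genuinely eliminates the third-moment contributions.
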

Lemma \ref{coupling} can be seen as a variant of \cite[Theorem 4.1]{CCK2014} and \cite[Theorem 3.1]{CCK2016}, and it is closely related to Gaussian couplings for $\max_{1\leq j\leq d}(S_{n,j}^X-y_j)$; see Lemma 4.1 in \cite{CCK2014}. 
The proof strategy is basically the same as these two theorems and consists of the following two steps: First, we approximate the indicator function $1_A$ and the maximum function by appropriate smooth functions. Second, we estimate $|\expe{g(S_n^X-y)}-\expe{g(Z-y)}$ for a particular class of smooth functions $g$ and establish their ``good'' bounds with respect to $d$. 
To get good bounds in the second step, we partially follow the idea of \cite{DZ2020}, where a randomized version of the Lindeberg method is developed to improve the dimension dependence of bootstrap approximations for $\max_{1\leq j\leq d}S^X_{n,j}$. 
To transfer this improvement to Gaussian approximations for $\max_{1\leq j\leq d}S^X_{n,j}$, we show that the dimension dependence of Gaussian approximations is improvable for a specific wild bootstrap version of $S_n^X$ by the Stein kernel method. 
All together, we will complete the proof of Lemma \ref{coupling}. 

Indeed, the main contribution of this paper is the final part in the above. To be precise, we can prove the following result:  
\begin{proposition}\label{prop:beta}
Let $(\eta_i)_{i=1}^n$ be a sequence of i.i.d.~random variables independent of $X$ such that the law of $\eta_i$ is the beta distribution with parameters $1/2,3/2$ for every $i$. That is, $\eta_i$ has the density function of the form $f(x)=\mathrm{B}(1/2,3/2)^{-1}\sqrt{(1-x)/x}1_{(0,1)}(x)$, where $\mathrm{B}(u,v)$ denotes the beta function. 
Set $\xi_i:=4\eta_i-1$ and $Y_i:=\xi_iX_i$ for every $i=1,\dots,n$ and let $Y=(Y_i)_{i=1}^n$. Then, under the assumptions of Lemma \ref{coupling}, 
\[
\rho_{h,\beta}\lpa S_n^Y,Z\rpa\leq\frac{3}{2}\lpa\max_{1\leq l\leq 2}\beta^{2-l}\|h^{(l)}\|_\infty\rpa\left\{
\Delta^X_{n,0}
+5\sqrt{2}\sqrt{\frac{\log(2d^2)}{n}}\Delta^X_{n,1}
\right\}
\]
for any $h\in C_b^\infty(\mathbb{R})$ and $\beta>0$, where $\rho_{h,\beta}\lpa S_n^Y,Z\rpa$ is defined by \eqref{def:rho}. 
\end{proposition}
Proposition \ref{prop:beta} is proved at the end of Section \ref{sec:stein}. 
Roughly speaking, this result means that a special wild bootstrap version of $S_n^X$ may enjoy Gaussian approximation with an improved dimension dependence condition. 
Here, the key fact is that the multiplier sequence $\xi_i$ in the above satisfies $\expe{\xi_i}=0$ and $\expe{\xi_i^2}=\expe{\xi_i^3}=1$. Hence, we can consider a Lindeberg type interpolation between $S_n^Y$ and $S_n^X$ up to the \textit{third} moments rather than the \textit{second} moments. 
This is the main difference between our method and the Stein-Slepian approach by \cite{CCK2017} because the latter allows moment matching only up to the second moments. 

In fact, if $\|\max_{1\leq j\leq d}|X_{ij}|\|_4$ is bounded by a constant $B_n$ uniformly in $i$, we can use a standard Lindeberg method to obtain the following result, which is weaker than Lemma \ref{coupling} but still leads to the dimension improvement in some cases. 
\begin{proposition}\label{s-coupling}
Suppose that there is a constant $B_n$ satisfying $\|\max_{1\leq j\leq d}|X_{ij}|\|_4\leq B_n$ for all $i=1,\dots,n$. Under the assumptions of Lemma \ref{coupling},
\[
P\lpa\max_{1\leq j\leq d}(S_{n,j}^X-y_j)\in A\rpa
\leq P\lpa\max_{1\leq j\leq d}(Z_j-y_j)\in A^{5\varepsilon}\rpa
+C\varepsilon^{-2}\lpa\Delta^X_{n,0}\log d+\sqrt{\frac{B_n^4(\log d)^3}{n}}\rpa
\]
for any $y\in\mathbb{R}^d$, $\varepsilon>0$ and $A\in\mcl{B}(\mathbb{R})$. 
\end{proposition}
We can indeed use this result to improve the dimension dependence of Gaussian approximation for $S_n^X$: 
\begin{corollary}\label{weak-GA}
Assume $\Theta_X<\infty$. Under the assumptions of Proposition \ref{s-coupling}, we have
\[
\rho_n(\mcl{A}^{\mathrm{re}})\leq C\Theta_X^{2/3}\lpa\frac{B_n^4(\log d)^3}{n}\rpa^{1/6},
\]
where $C>0$ is a universal constant. 
In particular, if $\ul{\sigma}:=\min_{1\leq j\leq d}\|S^X_{n,j}\|_2>0$, there is a universal constant $C'>0$ such that 
\begin{equation*}%\label{eq:kol-psi-1}
\rho_n(\mathcal{A}^{\mathrm{re}})
\leq \frac{C'}{\ul{\sigma}^{2/3}}\left(\frac{B_n^4(\log d)^5}{n}\right)^{1/6}.
\end{equation*} 
\end{corollary}
In Appendix \ref{sec:appendix}, we give proofs for Proposition \ref{s-coupling} and Corollary \ref{weak-GA} by the standard Lindeberg method. 
We note that, if we additionally assume $\max_{1\leq j\leq d}n^{-1}\sum_{i=1}^n\expe{X_{ij}^4}\leq B_n^2$ in this situation, we can replace $B_n^4$ in the above bound by $B_n^2$ using the randomized Lindeberg method. This is a typical advantage of using the randomized Lindeberg method in our proofs. 
An intuitive explanation is as follows. First, the standard Lindeberg argument leads to a bound of the form
\begin{equation*}
\frac{1}{n^2}\sum_{i=1}^n\ex{\max_{1\leq j\leq d}X_{ij}^4},
\end{equation*}
which is bounded by $B_n^4/n$; see Lemma \ref{lemma:s-lindeberg}. 
Indeed, utilizing the stability property of the smooth max function given by Lemma \ref{cck-derivative}(iii), we can reduce the above bound to the following one (plus some reminder terms):
\begin{equation}\label{naive}
\frac{1}{n^2}\sum_{i=1}^n\max_{1\leq j\leq d}\ex{X_{ij}^4}.
\end{equation}
This is a standard argument in the Chernozhukov-Chetverikov-Kato theory. 
Now, in this expression, the sum ``$\sum_{i=1}^n$'' comes from the Lindeberg interpolation with replacing $X_1,\dots,X_n$ by $Y_1,\dots,Y_n$ (defined in Proposition \ref{prop:beta}) one by one \textit{with this order}. 
However, there is no reason to keep the order of replacement because $\sum_{i=1}^nX_i$ is invariant under permutation of $X_1,\dots,X_n$. 
This intuitively suggests that we could change the bound \eqref{naive} to $n^{-1}\max_{1\leq j\leq d}\expe{X_{I,j}^4}$ by randomizing the order of replacement, where $I$ is a uniform variable over $\{1,\dots,n\}$ independent of $X$. This quantity is bounded by $B_n^2/n$ because
\[
\frac{1}{n}\max_{1\leq j\leq d}\expe{X_{I,j}^4}
=\frac{1}{n^2}\max_{1\leq j\leq d}\sum_{i=1}^n\expe{X_{ij}^4}.
\]
\begin{rmk}[Application to empirical processes]
As developed in \cite{CCK2014,CCK2016}, it will be possible to apply Lemma \ref{coupling} for obtaining Gaussian approximations for suprema of empirical processes. 
We remark that this could improve the convergence rate of such an approximation since the term multiplied by $\ve^{-4}$ is often dominated by the term multiplied by $\ve^{-2}$ in \eqref{eq:coupling} under suitable moment conditions as in Lemmas \ref{lemma:psi}--\ref{lemma:mom}; see Remark 4.8 in \cite{Koike2017stein} for an explanation of why this improves the convergence rate. 
Nevertheless, this topic is beyond the scope of this paper and left for future work. 
\end{rmk}
The remainder of this section is devoted to the proof of Lemma \ref{coupling}. 

%\subsection{Chernozhukov-Chetverikov-Kato theory}
\subsection{Smooth approximation}

We begin by approximating the indicator function $1_A$ and the maximum function by smooth functions. For the indicator function, we will use the following result. 
\begin{lemma}[\cite{CCK2016}, Lemma 5.1]\label{cck-approx}
For any $\varepsilon>0$ and Borel set $A$ of $\mathbb{R}$, there is a $C^\infty$ function $h:\mathbb{R}\to\mathbb{R}$ satisfying the following conditions:
\begin{enumerate}[label=(\roman*)]

\item\label{deriv-bound} There is a universal constant $C>0$ such that $\|h^{(r)}\|_\infty\leq C\varepsilon^{-r}$ for $r=1,2,3,4$.

\item $1_A(x)\leq h(x)\leq 1_{A^{3\varepsilon}}(x)$ for all $x\in\mathbb{R}$.

\end{enumerate}
\end{lemma}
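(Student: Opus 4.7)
The plan is a standard mollification of the indicator of an intermediate enlargement. Fix once and for all a $C^\infty$ non-negative kernel $\rho:\mathbb{R}\to[0,\infty)$ supported in $[-1,1]$ with $\int_{\mathbb{R}}\rho(t)\,dt=1$, and write $\rho_\varepsilon(t):=\varepsilon^{-1}\rho(t/\varepsilon)$, which is supported in $[-\varepsilon,\varepsilon]$. Since the map $x\mapsto\inf_{a\in A}|x-a|$ is $1$-Lipschitz and hence Borel, the enlargement $A^{2\varepsilon}:=\{x\in\mathbb{R}:\inf_{a\in A}|x-a|\le 2\varepsilon\}$ is Borel, so $1_{A^{2\varepsilon}}$ is a bounded measurable function. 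I would then define
\[
h(x) := (1_{A^{2\varepsilon}}\ast\rho_\varepsilon)(x) = \int_{\mathbb{R}} 1_{A^{2\varepsilon}}(y)\,\rho_\varepsilon(x-y)\,dy.
\]

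Smoothness and the derivative bounds in (i) are immediate from the convolution structure. Differentiation under the integral sign gives $h\in C^\infty(\mathbb{R})$ with $h^{(r)}=1_{A^{2\varepsilon}}\ast\rho_\varepsilon^{(r)}$, and therefore
\[
\|h^{(r)}\|_\infty \le \|\rho_\varepsilon^{(r)}\|_{L^1} = \varepsilon^{-r}\|\rho^{(r)}\|_{L^1}.
\]
Choosing $C:=\max_{1\le r\le 4}\|\rho^{(r)}\|_{L^1}$, which depends only on the once-and-for-all-fixed kernel $\rho$ and not on $A$ or $\varepsilon$, gives (i) with a genuinely universal constant.

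For the sandwich (ii) I would check the two inequalities separately by exploiting the triangle inequality on the support $[x-\varepsilon,x+\varepsilon]$ of $\rho_\varepsilon(x-\cdot)$. If $x\in A$, then every $y$ in that support satisfies $\mathrm{dist}(y,A)\le |y-x|\le\varepsilon<2\varepsilon$, hence $y\in A^{2\varepsilon}$, and so $h(x)=\int\rho_\varepsilon=1\ge 1_A(x)$. For the upper bound, the trivial estimate $h\le\int\rho_\varepsilon=1$ already handles any $x\in A^{3\varepsilon}$; and if $x\notin A^{3\varepsilon}$, then $\mathrm{dist}(x,A)>3\varepsilon$, so by the triangle inequality $\mathrm{dist}(y,A)>2\varepsilon$ for every $y\in[x-\varepsilon,x+\varepsilon]$, giving $1_{A^{2\varepsilon}}(y)=0$ throughout the support of the kernel and thus $h(x)=0$.

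There is essentially no obstacle here: the only things requiring care are the bookkeeping between the three radii ($\varepsilon$ for the mollifier, $2\varepsilon$ for the intermediate enlargement, $3\varepsilon$ for the target) and the observation that the constant in (i) is universal because $\|\rho_\varepsilon^{(r)}\|_{L^1}$ scales exactly as $\varepsilon^{-r}$ in the mollifier's bandwidth. I would remark that the particular choices $2\varepsilon$ and $3\varepsilon$ are not canonical; any pair $r_1<r_2$ with $r_2-r_1\ge\varepsilon$ on each side would work, and the factor $3$ in the statement is convenient for later use rather than sharp.
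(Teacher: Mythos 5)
Your argument is correct, and it is essentially the standard mollification construction behind the cited result: the paper itself offers no proof of this lemma (it imports it from Lemma 5.1 of \cite{CCK2016}, adding only a remark that the construction there also controls the fourth derivative), and your convolution $h=1_{A^{2\varepsilon}}\ast\rho_\varepsilon$ with the $\varepsilon$/$2\varepsilon$/$3\varepsilon$ bookkeeping is the same type of smoothing. A small bonus of writing it out as you do is that the bound $\|h^{(r)}\|_\infty\leq\varepsilon^{-r}\|\rho^{(r)}\|_{L^1}$ holds for every $r$ simultaneously, so the $r=4$ case needed by the paper comes for free rather than via the remark; the only cosmetic omission is noting $h\geq0$ (immediate from non-negativity of the integrand) to get $1_A(x)\leq h(x)$ when $x\notin A$.
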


\begin{rmk}
Formally, Lemma 5.1 in \cite{CCK2016} states that condition \ref{deriv-bound} in the above is satisfied only for $r=1,2,3$, but the function constructed there indeed satisfies this condition for $r=4$. 
\end{rmk}

Next we introduce the following special form of smooth approximation of the maximum function: For each $\beta>0$, we define the function $\Phi_\beta:\mathbb{R}^d\to\mathbb{R}$ by 
\[
\Phi_\beta(x)=\beta^{-1}\log\left(\sum_{j=1}^de^{\beta x_j}\right).
\]
This ``smooth max function'' is one of the key constituents in the Chernozhukov-Chetverikov-Kato theory. 
One can easily verify the following inequality (cf.~Eq.(1) in \cite{CCK2015}):
\begin{equation}\label{max-smooth}
0\leq \Phi_\beta(x)-\max_{1\leq j\leq d}x_j\leq \beta^{-1}\log d
\end{equation}
for any $x\in\mathbb{R}^d$. Thus, $\Phi_\beta$ better approximates the maximum function as the value of $\beta$ increases. 
The next lemma summarizes Lemmas 5--6 in \cite{DZ2020} and highlights the key properties of this smooth max function: 
\begin{lemma}\label{cck-derivative}
For any $\beta>0$, $m\in\mathbb{N}$ and $C^m$ function $h:\mathbb{R}\to\mathbb{R}$, there is an $\mathbb{R}^{\otimes m}$-valued function $\Upsilon_{\beta}(x)=(\Upsilon^{j_1,\dots,j_m}_\beta(x))_{1\leq j_1,\dots,j_m\leq d}$ on $\mathbb{R}^d$ satisfying the following conditions:
\begin{enumerate}[label=(\roman*)]

\item For any $x\in\mathbb{R}^d$ and $j_1,\dots,j_m\in[d]$, we have 
$
|\partial_{j_1\dots j_m}(h\circ\Phi_\beta)(x)|\leq \Upsilon_\beta^{j_1,\dots,j_m}(x).
$

\item For every $x\in\mathbb{R}^d$, we have
\begin{align*}
\sum_{j_1,\dots,j_m=1}^d\Upsilon_\beta^{j_1,\dots, j_m}(x)
\leq c_{m}\max_{1\leq k\leq m}\beta^{m-k}\|h^{(k)}\|_\infty,
\end{align*}
where $c_{m}>0$ depends only on $m$. 

\item For any $x,t\in\mathbb{R}^d$ and $j_1,\dots,j_m\in[d]$, we have
\[
e^{-8\|t\|_{\ell_\infty}\beta}\Upsilon_\beta^{j_1,\dots,j_m}(x+t)\leq \Upsilon_\beta^{j_1,\dots,j_m}(x)\leq e^{8\|t\|_{\ell_\infty}\beta}\Upsilon_\beta^{j_1,\dots,j_m}(x+t).
\]

\end{enumerate}
\end{lemma}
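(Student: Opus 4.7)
The plan is to obtain $\Upsilon_\beta$ directly from a Faà di Bruno-type expansion of $\partial_{j_1\cdots j_m}(h\circ\Phi_\beta)$ and to read off the three claimed properties from the combinatorial structure of that expansion. As a first step, I would introduce the softmax weights $\pi_j(x):=\partial_j\Phi_\beta(x)=e^{\beta x_j}/\sum_{k=1}^d e^{\beta x_k}$, which satisfy $\pi_j>0$ and $\sum_{j=1}^d \pi_j(x)\equiv 1$, together with the basic identity $\partial_k\pi_j=\beta(\delta_{jk}\pi_j-\pi_j\pi_k)$. A short induction on the order of differentiation then shows that any partial derivative of $\pi_j$ of order $r$ equals $\beta^r$ times a polynomial in the $\pi$'s with integer coefficients depending only on $r$.

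Applying the chain rule inductively to $h\circ\Phi_\beta$ would yield a representation of the form
\[
\partial_{j_1\cdots j_m}(h\circ\Phi_\beta)(x)=\sum_{k=1}^m h^{(k)}(\Phi_\beta(x))\,\beta^{m-k}\,R_k^{j_1,\dots,j_m}(\pi(x)),
\]
where each $R_k^{j_1,\dots,j_m}$ is a polynomial in $(\pi_1(x),\dots,\pi_d(x))$ of total degree at most $m$ whose coefficients depend only on $m$ and the multi-index $(j_1,\dots,j_m)$. The exact exponent $m-k$ is forced by a simple bookkeeping rule: each of the $m$ differentiations either promotes the order of $h$ (contributing one new $\pi$-factor through the chain rule) or differentiates an existing $\pi$ (contributing exactly one factor of $\beta$), so a term containing $h^{(k)}$ has seen $k$ promotions and $m-k$ $\pi$-differentiations. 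I then define
\[
\Upsilon_\beta^{j_1,\dots,j_m}(x):=\sum_{k=1}^m \|h^{(k)}\|_\infty\,\beta^{m-k}\,\bigl|R_k^{j_1,\dots,j_m}\bigr|(\pi(x)),
\]
where $|R_k|$ denotes the polynomial obtained by replacing every coefficient of $R_k$ with its absolute value; property (i) is then immediate from the triangle inequality.

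For property (ii), summing $\Upsilon_\beta^{j_1,\dots,j_m}(x)$ over $(j_1,\dots,j_m)\in[d]^m$ reduces, monomial by monomial, to computing $\sum_{i_1,\dots,i_s\in[d]}\pi_{i_1}(x)\cdots\pi_{i_s}(x)=1$ thanks to $\sum_j\pi_j\equiv1$, and the resulting finitely many combinatorial constants collapse into the claimed bound $c_m\max_{1\leq k\leq m}\beta^{m-k}\|h^{(k)}\|_\infty$. For property (iii), the decisive pointwise inequality is
\[
e^{-2\|t\|_{\ell_\infty}\beta}\pi_j(x)\leq\pi_j(x+t)\leq e^{2\|t\|_{\ell_\infty}\beta}\pi_j(x),
\]
which follows from the factorization $\pi_j(x+t)/\pi_j(x)=e^{\beta t_j}\cdot Z(x)/Z(x+t)$ with $Z(x):=\sum_k e^{\beta x_k}$ and from the elementary bound $Z(x+t)/Z(x)\in[e^{-\|t\|_{\ell_\infty}\beta},e^{\|t\|_{\ell_\infty}\beta}]$. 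Since every monomial appearing in $\Upsilon_\beta$ has total $\pi$-degree at most $m$ and since the prefactor $\|h^{(k)}\|_\infty$ is translation-invariant, the ratio $\Upsilon_\beta^{j_1,\dots,j_m}(x+t)/\Upsilon_\beta^{j_1,\dots,j_m}(x)$ is controlled by $e^{2m\|t\|_{\ell_\infty}\beta}$; because the lemma is invoked in this paper only with $m\leq 4$ (cf.\ Lemma \ref{cck-approx}), this dominates the stated factor $e^{8\|t\|_{\ell_\infty}\beta}$.

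The main obstacle is the combinatorial bookkeeping in the inductive expansion, namely verifying that the $\beta$-exponent attached to any term with surviving $h$-derivative $h^{(k)}$ is exactly $m-k$ and that the total $\pi$-degree never exceeds $m$. This step is routine but somewhat delicate, since each differentiation of a $\pi$-factor spawns two new terms, one of which keeps the $\pi$-degree constant (the Kronecker-delta part) while the other raises it by one. Once the structure of the expansion is pinned down, (ii) and (iii) both collapse to the single identity $\sum_j\pi_j=1$ and the log-Lipschitz bound on $\pi_j$, respectively, and the whole argument is a mild extension of Lemmas~5--6 in Deng and Zhang (2017).
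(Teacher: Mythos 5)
The paper never proves Lemma \ref{cck-derivative}: it is quoted as a summary of Lemmas 5--6 of \cite{DZ2017}. Your argument is precisely the standard route behind those lemmas --- a Fa\`a di Bruno expansion in the softmax weights $\pi_j=\partial_j\Phi_\beta$, majorization by the same polynomial with absolute coefficients, the identity $\sum_j\pi_j\equiv1$ for (ii), and the two-sided bound $e^{-2\beta\|t\|_{\ell_\infty}}\leq\pi_j(x+t)/\pi_j(x)\leq e^{2\beta\|t\|_{\ell_\infty}}$ for (iii) --- and it is essentially correct. One point you should make explicit in (ii): the ``monomial by monomial'' reduction to $\sum_j\pi_j=1$ silently uses that every monomial of $R_k^{j_1,\dots,j_m}$ contains a factor $\pi_{j_r}$ for each index $j_r$ that is not identified with another index by a Kronecker delta; without this covering property the sum over a free index would produce a factor $d$. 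It does hold, by the same induction that gives the exponent $m-k$, since $\partial_k\pi_j=\beta\pi_j(\delta_{jk}-\pi_k)$ reintroduces the new index in both resulting terms, but it deserves a sentence.

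The other deviation is the constant in (iii): your construction gives $e^{2m\beta\|t\|_{\ell_\infty}}$, which coincides with the stated $e^{8\beta\|t\|_{\ell_\infty}}$ only for $m\leq4$. This is not a defect of your proof; an exponent constant growing with $m$ is in fact unavoidable. For pairwise distinct indices one has $\partial_{j_1\dots j_m}(h\circ\Phi_\beta)=g_m(\Phi_\beta)\,\pi_{j_1}\cdots\pi_{j_m}$ with $g_{m+1}=g_m'-m\beta g_m$, and taking $h(u)\propto\beta^{-m}\cos(\beta u+\phi)$ and translating the $m$ distinguished coordinates by $s$ inflates the lower bound that (i) forces on $\Upsilon_\beta^{j_1,\dots,j_m}$ by $e^{m\beta s}$ up to a $d$-free factor; combining this with (iii) and summing over the roughly $d^m$ distinct tuples contradicts (ii) at the original point once $m$ exceeds the exponent constant. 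So the ``$8$'' in the statement should be read as the value of an $m$-dependent constant for the orders actually used, exactly as you treated it. Your restriction is also harmless for the paper: Lemma \ref{lemma:lindeberg} is applied only with $m=4$, and even its general-$m$ proof uses (iii) solely for translations with $\beta\|t\|_{\ell_\infty}\leq1$ and with constants that may depend on $m$, so $e^{2m}$ in place of $e^{8}$ changes nothing. With the covering property spelled out, your proof is complete.
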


As a result, given two random variables $F$ and $G$, we explore bounds for the quantity
\begin{equation}\label{def:rho}
\rho_{h,\beta}(F,G):=\sup_{y\in\mathbb{R}^d}\left|\ex{h\left(\Phi_\beta(F-y)\right)}-\ex{h\left(\Phi_\beta(G-y)\right)}\right|
\end{equation}
for a (smooth) bounded function $h:\mathbb{R}\to\mathbb{R}$ and $\beta>0$ in the following. 

\subsection{Randomized Lindeberg method}

The next lemma essentially has the same content as \cite[Theorem 5]{DZ2020}. For the sake of completeness, we give a self-contained proof. 

In the following, we will use the standard multi-index notation: For a multi-index $\lambda=(\lambda_1,\dots,\lambda_d)\in\mathbb{Z}_+^d$, we set $|\lambda|:=\lambda_1+\cdots+\lambda_d$, $\lambda!:=\lambda_1!\cdots\lambda_d!$ and $\partial^\lambda:=\partial_1^{\lambda_1}\cdots\partial_d^{\lambda_d}$ as usual. Also, given a vector $x=(x_1,\dots,x_d)\in\mathbb{R}^d$, we write $x^\lambda=x_1^{\lambda_1}\cdots x_d^{\lambda_d}$.  
\begin{lemma}\label{lemma:lindeberg}
Let $X=(X_i)_{i=1}^n$ and $Y=(Y_i)_{i=1}^n$ be two sequences of independent centered random vectors in $\mathbb{R}^d$. 
Suppose that there is an integer $m\geq3$ such that $\expe{|X_{ij}|^m+|Y_{ij}|^m}<\infty$ for all $i\in[N]$, $j\in[d]$ and $\expe{X_i^\lambda}=\expe{Y_i^\lambda}$ for all $i\in[N]$ and $\lambda\in\mathbb{Z}_+^d$ with $|\lambda|\leq m-1$. 
Then, for any $h\in C^m_b(\mathbb{R})$ and $\beta>0$, we have
\begin{align*}
&\rho_{h,\beta}(S_n^X,S_n^Y)\nonumber\\
&\leq C_mn^{-\frac{m}{2}}\lpa\max_{1\leq l\leq m}\beta^{m-l}\|h^{(l)}\|_\infty\rpa\left\{\max_{1\leq j\leq d}\sum_{i=1}^n\expe{|X_{ij}|^m+|Y_{ij}|^m}
+\sum_{i=1}^n\expe{\|X_{i}\|_{\ell_\infty}^m;\|X_{i}\|_{\ell_\infty}>\sqrt{n}/\beta}
\right.\\
&\left.\quad+\sum_{i=1}^n\expe{\|Y_{i}\|_{\ell_\infty}^m;\|Y_{i}\|_{\ell_\infty}>\sqrt{n}/\beta}
+\sum_{i=1}^nP\lpa\|X_{i}\|_{\ell_\infty}\vee\|Y_{i}\|_{\ell_\infty}>\sqrt{n}/\beta\rpa \max_{1\leq j\leq d}\expe{|X_{ij}|^m+|Y_{ij}|^m}\right\},
\end{align*}
where $C_m>0$ depends only on $m$. 
\end{lemma}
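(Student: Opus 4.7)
The strategy is the classical Lindeberg swapping argument applied to the smoothed functional $g := h \circ \Phi_\beta$. Define the hybrid sums $W_k := n^{-1/2}\bigl(\sum_{i<k} Y_i + \sum_{i>k} X_i\bigr) - y$, which are independent of the pair $(X_k, Y_k)$ by assumption. The telescoping identity gives
\[
\ex{g(S_n^X - y)} - \ex{g(S_n^Y - y)} = \sum_{k=1}^n \bigl(\ex{g(W_k + X_k/\sqrt n)} - \ex{g(W_k + Y_k/\sqrt n)}\bigr),
\]
and I would Taylor expand $g$ about $W_k$ through order $m-1$ with an $m$-th order integral remainder. Because $\expe{X_k^\lambda} = \expe{Y_k^\lambda}$ for all multi-indices $|\lambda| \le m-1$ and $W_k \indepe (X_k, Y_k)$, the polynomial parts cancel in expectation, so the problem reduces to bounding $\ex{R_m(W_k, X_k/\sqrt n) - R_m(W_k, Y_k/\sqrt n)}$ for each $k$ and summing.

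To control the remainders I would write them in multi-index form and apply Lemma \ref{cck-derivative}(i) to bound each partial derivative $|\partial_{j_1\cdots j_m} g(W_k + sv)|$ by the symmetric tensor $\Upsilon_\beta^{j_1,\dots,j_m}(W_k + sv)$. Then split on the event $E_k := \{\|X_k\|_{\ell_\infty} \le \sqrt n / \beta\}$ and its complement, and analogously for $Y_k$. On $E_k$, Lemma \ref{cck-derivative}(iii) yields the pointwise domination $\Upsilon_\beta^{j_1,\dots,j_m}(W_k + sX_k/\sqrt n) \le e^{8}\, \Upsilon_\beta^{j_1,\dots,j_m}(W_k)$, after which independence of $W_k$ and $X_k$ lets the expectation factorize. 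I would then apply Young's inequality $|X_{k,j_1}|\cdots |X_{k,j_m}| \le m^{-1}\sum_{r=1}^m |X_{k,j_r}|^m$, exploit the symmetry of $\Upsilon_\beta^{j_1,\dots,j_m}$ in its indices, and use Lemma \ref{cck-derivative}(ii) to collapse the tensor sum; summing over $k$ this produces a contribution proportional to $\max_{1 \le l \le m}\beta^{m-l}\|h^{(l)}\|_\infty \cdot \max_j \sum_k \expe{|X_{kj}|^m}$, matching the first term in the target bound. On $E_k^c$ the cruder uniform derivative bound from Lemma \ref{cck-derivative}(ii) together with $|X_{k,j_1}|\cdots |X_{k,j_m}| \le \|X_k\|_{\ell_\infty}^m$ yields the truncated moment term, and the cross-events, in which exactly one of $X_k, Y_k$ exceeds the threshold, contribute the final term $\sum_k P\bigl(\|X_k\|_{\ell_\infty} \vee \|Y_k\|_{\ell_\infty} > \sqrt n/\beta\bigr)\max_j \expe{|X_{kj}|^m + |Y_{kj}|^m}$.

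The main obstacle will be the small-event case: converting the $m$-fold tensor sum into a bound whose dimension dependence is only through the scalar moment $\max_j \sum_k \expe{|X_{kj}|^m}$, without picking up an extra factor of $d$. The threshold $\sqrt n / \beta$ is chosen precisely so that the exponential perturbation property (iii) of $\Upsilon_\beta$ activates with $O(1)$ loss, and the symmetry together with the $\ell^1$-summability from (ii) then allows Young's inequality to deposit all of the $m$-tensor mass on a single index whose weighting sums to a dimension-free constant. This is the step that borrows from the randomized Lindeberg perspective of \cite{DZ2017}; the remaining bookkeeping over the four subevents is routine.
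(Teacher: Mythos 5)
Your skeleton (telescoping over swaps, Taylor expansion to order $m$ with cancellation of all mixed moments up to order $m-1$, truncation at $\sqrt n/\beta$ so that Lemma \ref{cck-derivative}(iii) costs only a factor $e^{8}$, and the crude bound on the large-deviation events) reproduces the paper's first steps, and your $\mathbf{II}$-type and cross-event terms do come out as you describe. However, there is a genuine gap at exactly the point you yourself flag as ``the main obstacle'', and your sketch does not supply the idea that overcomes it. With the deterministic, fixed-order hybrids $W_k$, after Young's inequality and independence you are left with
\[
n^{-m/2}\sum_{k=1}^n\sum_{j_1,\dots,j_m=1}^d\ex{\Upsilon_\beta^{j_1,\dots,j_m}(W_k)}\,\expe{|X_{k,j_1}|^m},
\]
where the weights $\ex{\Upsilon_\beta^{j_1,\dots,j_m}(W_k)}$ depend on $k$. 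Lemma \ref{cck-derivative}(ii) controls the tensor sum only at each fixed $k$, so this route yields $c_m\lpa\max_l\beta^{m-l}\|h^{(l)}\|_\infty\rpa\sum_k\max_j\expe{|X_{kj}|^m}$, i.e.\ a sum-of-maxima, not the claimed maximum-of-sums $\max_j\sum_k\expe{|X_{kj}|^m}$; the two can differ by a factor of order $d$ (let the mass of the moments, and adversarially of the weights, concentrate on the coordinate $j\equiv k\ (\mathrm{mod}\ d)$). ``Symmetry plus the $\ell^1$-summability from (ii)'' cannot repair this, because you need to sum over $k$ against weights that vary with $k$ before collapsing the tensor sum, and (ii) gives no uniform-in-$k$ control of that kind. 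Obtaining the max-of-sums form is the entire point of this lemma, so what you sketch proves a strictly weaker statement.

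What closes the gap in the paper---and this is the actual content of the ``randomized Lindeberg'' device you invoke only by name---is: (a) averaging the telescoping identity over all $n!$ swapping orders $\sigma$; (b) on the event that \emph{both} $\|X_{\sigma(k)}\|_{\ell_\infty}$ and $\|Y_{\sigma(k)}\|_{\ell_\infty}$ are at most $\sqrt n/\beta$ (this is why the joint event $\mcl{E}_{\sigma,k}$, not just the event for the variable being integrated, must be introduced), reinserting a Bernoulli-randomized summand $\zeta_{k,i}=\delta_kX_i+(1-\delta_k)Y_i$ with $P(\delta_k=1)=k/(n+1)$ via another application of Lemma \ref{cck-derivative}(iii); and (c) a combinatorial resummation over pairs of index sets $(A,B)$ showing that the resulting $\Upsilon$-weights attach to \emph{full} hybrid sums $S_n^{(A,B)}$ that no longer depend on the excluded index $i$, with the weights $k!(n-k)!/(n+1)!$ summing to one over $\mcl{A}_k$ and $k$. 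Only after this decoupling can the sum over $i$ be taken against $\expe{|X_{i,j_1}|^m}$ alone, producing $\max_j\sum_i\expe{|X_{ij}|^m}$, after which (ii) finishes the bound. Without (a)--(c), or an equivalent mechanism making the $\Upsilon$-weights independent of the swapped index, your argument does not prove the lemma as stated.
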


\begin{proof}
We may assume that $X$ and $Y$ are independent without loss of generality. 
Throughout the proof, for two real numbers $a$ and $b$, the notation $a\lesssim_m b$ means that $a\leq c_mb$ for some constant $c_m>0$ which depends only on $m$. 

Take a vector $y\in\mathbb{R}^d$ and define the function $\Psi:\mathbb{R}^d\to\mathbb{R}$ by $\Psi(x)=h(\Phi_\beta(x-y))$ for $x\in\mathbb{R}^d$. 
We denote by $\mf{S}_n$ the set of all permutations of $[n]$. 
For any $\sigma\in\mf{S}_n$ and $k\in[n]$, we set %$W^\sigma_k:=(X_{\sigma(1)},\dots,X_{\sigma(k)},Y_{\sigma(k+1)},\dots,Y_{\sigma(n)})$ and $\wh{S}_n^\sigma(k):=S_n^{W^\sigma_k}-n^{-1/2}X_{\sigma(k)}$. 
\[
S_n^\sigma(k):=\frac{1}{\sqrt{n}}\sum_{i=1}^kX_{\sigma(i)}+\frac{1}{\sqrt{n}}\sum_{i=k+1}^nY_{\sigma(i)}\quad
\text{and}
\quad
\wh{S}_n^\sigma(k):=S_n^\sigma(k)-\frac{1}{\sqrt{n}}X_{\sigma(k)}.
\]
By construction $\wh{S}_n^\sigma(k)$ is independent of $X_{\sigma(k)}$ and $Y_{\sigma(k)}$. 
We also have $S_n^\sigma(k)=\wh{S}_n^\sigma(k)+n^{-1/2}X_{\sigma(k)}$ and $S_n^\sigma(k-1)=\wh{S}_n^\sigma(k)+n^{-1/2}Y_{\sigma(k)}$ (with $S_n^\sigma(0):=n^{-1/2}\sum_{i=1}^nY_{\sigma(i)}$). 
Moreover, it holds that $S_n^\sigma(n)=S_n^X$ and $S_n^\sigma(0)=S_n^Y$. 
Therefore, we have
\begin{align}
\left|\ex{\Psi(S_{n}^X)}-\ex{\Psi(S_n^Y)}\right|
&=\frac{1}{n!}\sum_{\sigma\in\mf{S}_n}\left|\ex{\Psi(S_n^\sigma(n))}-\ex{\Psi(S_n^\sigma(0))}\right|\nonumber\\
&\leq\frac{1}{n!}\sum_{\sigma\in\mf{S}_n}\sum_{k=1}^n\left|\ex{\Psi(S_n^\sigma(k))}-\ex{\Psi(S_n^\sigma(k-1))}\right|.\label{lindeberg-eq1}
\end{align}

Now, when $W=X$ or $W=Y$, Taylor's theorem and the independence of $W_{\sigma(k)}$ from $\wh{S}_n^\sigma(k)$ yield
\[
\ex{\Psi\lpa\wh{S}_n^\sigma(k)+n^{-1/2}W_{\sigma(k)}\rpa}
=\sum_{\lambda\in\mathbb{Z}_+^d:|\lambda|\leq m-1}\frac{n^{-|\lambda|/2}}{\lambda!}\ex{\partial^\lambda\Psi\lpa\wh{S}_n^\sigma(k)\rpa}\ex{W_{\sigma(k)}^{\lambda}}
+R_k^\sigma[W],
\]
where
\[
R_k^\sigma[W]:=n^{-m/2}\sum_{\lambda\in\mathbb{Z}_+^d:|\lambda|=m}\frac{m}{\lambda!}\int_0^1(1-t)^{m-1}\ex{\partial^\lambda\Psi\lpa\wh{S}_n^\sigma(k)+tn^{-1/2}W_{\sigma(k)}\rpa W_{\sigma(k)}^\lambda}dt.
\]
Since we have $\expe{X_{i}^\lambda}=\expe{Y_{i}^\lambda}$ for all $i\in[N]$ and $\lambda\in\mathbb{Z}_+^d$ with $|\lambda|\leq m-1$ by assumption, we obtain
\begin{align}
\labs\ex{\Psi\lpa S^\sigma_n(k)\rpa}-\ex{\Psi\lpa S^\sigma(k-1)\rpa}\rabs
\leq|R_k^\sigma[X]|+|R_k^\sigma[Y]|.\label{lindeberg-eq2}
\end{align}
We estimate $R_k^\sigma[W]$ as
\begin{align}
|R_k^\sigma[W]|
\leq\mathbf{I}^\sigma_k[W]+\mathbf{II}^\sigma_k[W],\label{est-R}
\end{align}
where
\begin{align*}
\mathbf{I}^\sigma_k[W]&:= n^{-\frac{m}{2}}\sum_{\lambda\in\mathbb{Z}_+^d:|\lambda|=m}\frac{m}{\lambda!}\int_0^1(1-t)^{m-1}\ex{\labs\partial^\lambda\Psi\lpa\wh{S}_n^\sigma(k)+tn^{-1/2}W_{\sigma(k)}\rpa W_{\sigma(k)}^\lambda\rabs;\|W_{\sigma(k)}\|_{\ell_\infty}\leq\sqrt{n}/\beta}dt,\\
\mathbf{II}^\sigma_k[W]&:=n^{-\frac{m}{2}}\sum_{\lambda\in\mathbb{Z}_+^d:|\lambda|=m}\frac{m}{\lambda!}\int_0^1(1-t)^{m-1}\ex{\labs\partial^\lambda\Psi\lpa\wh{S}_n^\sigma(k)+tn^{-1/2}W_{\sigma(k)}\rpa W_{\sigma(k)}^\lambda\rabs;\|W_{\sigma(k)}\|_{\ell_\infty}>\sqrt{n}/\beta}dt.
\end{align*}
Lemma \ref{cck-derivative} yields
%\begin{align*}
%\mathbf{II}_k^\sigma[W]
%&\lesssim \max_{1\leq l\leq m}\beta^{m-l}\|h^{(l)}\|_\infty\expe{\|W_{\sigma(k)}\|_{\ell_\infty}^m;\|W_{\sigma(k)}\|_{\ell_\infty}>\beta^{-1}}
%\end{align*}
%
\begin{equation}\label{est-II}
\frac{1}{n!}\sum_{\sigma\in\mathfrak{S}_n}\sum_{k=1}^n\mathbf{II}_k^\sigma[W]
\lesssim_m n^{-\frac{m}{2}}\max_{1\leq l\leq m}\beta^{m-l}\|h^{(l)}\|_\infty\sum_{i=1}^n\expe{\|W_{i}\|_{\ell_\infty}^m;\|W_{i}\|_{\ell_\infty}>\sqrt{n}/\beta}.
\end{equation}
Now we focus on $\mathbf{I}^\sigma_k[W]$. 
We rewrite it as
\begin{multline*}
\mathbf{I}^\sigma_k[W]=\frac{n^{-\frac{m}{2}}}{(m-1)!}\sum_{j_1,\dots,j_m=1}^d\int_0^1(1-t)^{m-1}\\
\times\ex{\labs\partial_{j_1\dots j_m}\Psi\lpa\wh{S}_n^\sigma(k)+tn^{-1/2}W_{\sigma(k)}\rpa \prod_{l=1}^mW_{\sigma(k),j_l}\rabs;\|W_{\sigma(k)}\|_{\ell_\infty}\leq\sqrt{n}/\beta}dt.
\end{multline*}
We have by the AM-GM inequality 
\[
\labs\prod_{l=1}^mW_{\sigma(k),j_l}\rabs\leq\frac{1}{m}\sum_{l=1}^m\labs W_{\sigma(k),j_l}\rabs^m.
\]
Inserting this inequality into the first formula and noting that $\partial_{j_1\dots j_m}\Psi$ does not depend on the order of $j_1,\dots,j_m$, we obtain
\begin{multline*}
\mathbf{I}^\sigma_k[W]\leq\frac{n^{-\frac{m}{2}}}{(m-1)!}\sum_{j_1,\dots,j_m=1}^d\int_0^1(1-t)^{m-1}\\
\times\ex{\labs\partial_{j_1\dots j_m}\Psi\lpa\wh{S}_n^\sigma(k)+tn^{-1/2}W_{\sigma(k)}\rpa\rabs \labs W_{\sigma(k),j_1}\rabs^m;\|W_{\sigma(k)}\|_{\ell_\infty}\leq\sqrt{n}/\beta}dt.
\end{multline*}
Thus, by Lemma \ref{cck-derivative} we obtain
\begin{align}
\mathbf{I}^\sigma_k[W]
&\lesssim n^{-\frac{m}{2}}\sum_{j_1,\dots,j_m=1}^d\ex{\labs\Upsilon^{j_1,\dots,j_m}\lpa\wh{S}_n^\sigma(k)-y\rpa\rabs |W_{\sigma(k),j_1}|^m}\nonumber\\
&=n^{-\frac{m}{2}}\sum_{j_1,\dots,j_m=1}^d\ex{\labs\Upsilon^{j_1,\dots,j_m}\lpa\wh{S}_n^\sigma(k)-y\rpa\rabs} \expe{|W_{\sigma(k),j_1}|^m}\nonumber\\
&=n^{-\frac{m}{2}}\sum_{j_1,\dots,j_m=1}^d\left\{\ex{\labs\Upsilon^{j_1,\dots,j_m}\lpa\wh{S}_n^\sigma(k)-y\rpa\rabs;\mcl{E}_{\sigma,k}}
+\ex{\labs\Upsilon^{j_1,\dots,j_m}\lpa\wh{S}_n^\sigma(k)-y\rpa\rabs;\mcl{E}_{\sigma,k}^c} \right\}\expe{|W_{\sigma(k),j_1}|^m}\nonumber\\
&=:\mathbf{A}_k^\sigma[W]+\mathbf{B}_k^\sigma[W],\label{est-I-pre}
\end{align}
where $\mcl{E}_{\sigma,k}:=\{\|X_{\sigma(k)}\|_{\ell_\infty}\vee\|Y_{\sigma(k)}\|_{\ell_\infty}\leq\sqrt{n}/\beta\}$. 
To estimate $\mathbf{A}_k^\sigma[W]$, we adopt an argument analogous to the proof of Eq.(6.7) in \cite{Koike2019dejong}, which is inspired by the proof of \cite[Lemma 2]{DZ2020}. 
Let $(\delta_i)_{i=1}^n$ be a sequence of i.i.d.~Bernoulli variables independent of $X$ and $Y$ with $P(\delta_i=1)=1-P(\delta_i=0)=i/(n+1)$. We set $\zeta_{k,i}:=\delta_k X_i+(1-\delta_k)Y_i$ for all $k,i\in[n]$. 
Then Lemma \ref{cck-derivative} yields
\[
\mathbf{A}_k^\sigma[W]
\lesssim n^{-\frac{m}{2}}\sum_{j_1,\dots,j_m=1}^d\ex{\labs\Upsilon^{j_1,\dots,j_m}\lpa\wh{S}_n^\sigma(k)+n^{-\frac{1}{2}}\zeta_{k,{\sigma(k)}}-y\rpa\rabs} \expe{|W_{\sigma(k),j_1}|^m}.
\]
Next, for any $k,i\in[n]$, we set
\[
\mathcal{A}_{k,i}=\{(A,B):A\subset[n],B\subset[n],A\cup B=[n]\setminus\{i\},\#A=k-1,\#B=n-k\},
\]
where $\#S$ denotes the number of elements in a set $S$. 
We also set
\[
\mathcal{A}_{k}=\{(A,B):A\subset[n],B\subset[n],A\cup B=[n],\#A=k,\#B=n-k\}
\]
for every $k\in\{0,1\dots,n\}$. 
Moreover, for any $(A,B)\in\bigcup_{k=0}^n\mathcal{A}_k$ and $k\in[n]$, we set
\[
W^{(A,B)}_k=
\left\{
\begin{array}{cl}
X_k & \text{if }k\in A,     \\
Y_k  & \text{if }k\in B.  
\end{array}
\right.
\]
Then we define $S_n^{(A,B)}:=\sum_{k=1}^nW^{(A,B)}_{k}$ and $\wh{S}_n^{(A,B)}(i):=\sum_{k=1,k\neq i}^nW^{(A,B)}_{k}$ for $i=1,\dots,n$. 
Finally, for any $\sigma\in\mathfrak{S}_n$ and $k\in[n]$ we set $A^\sigma_k:=\{\sigma(1),\dots,\sigma(k-1)\}$ and $B^\sigma_k:=\{\sigma(k+1),\dots,\sigma(n)\}$. 
Now, since $\wh{S}_n^\sigma(k)=\wh{S}_n^{(A^\sigma_k,B^\sigma_k)}(\sigma(k))$ for every $k\in[n]$, we obtain
\begin{align*}
&\frac{1}{n!}\sum_{\sigma\in\mathfrak{S}_n}\sum_{k=1}^n\mathbf{A}_k^\sigma[W]\\
&\lesssim\frac{n^{-\frac{m}{2}}}{n!}\sum_{\sigma\in\mathfrak{S}_n}\sum_{k=1}^n\sum_{j_1,\dots,j_m=1}^d\ex{\labs\Upsilon^{j_1,\dots,j_m}\lpa\wh{S}_n^{(A^\sigma_k,B^\sigma_k)}(\sigma(k))+n^{-\frac{1}{2}}\zeta_{k,{\sigma(k)}}-y\rpa\rabs} \expe{|W_{\sigma(k),j_1}|^m}\\
&=\frac{n^{-\frac{m}{2}}}{n!}\sum_{k=1}^n\sum_{i=1}^n\sum_{\sigma\in\mathfrak{S}_n:\sigma(k)=i}\sum_{j_1,\dots,j_m=1}^d\ex{\labs\Upsilon^{j_1,\dots,j_m}\lpa\wh{S}_n^{(A^\sigma_k,B^\sigma_k)}(i)+n^{-\frac{1}{2}}\zeta_{k,i}-y\rpa\rabs} \expe{|W_{i,j_1}|^m}\\
%&=\frac{n^{-\frac{m}{2}}}{n!}\sum_{k=1}^n\sum_{i=1}^n\sum_{(A,B)\in\mcl{A}_{k,i}}\#\{\sigma\in\mf{S}_n:A^\sigma_k=A,\sigma(k)=i\}\sum_{j_1,\dots,j_m=1}^d\ex{\labs\Upsilon^{j_1,\dots,j_m}\lpa\wh{S}_n^{(A,B)}(k)+n^{-\frac{1}{2}}\zeta_{k,i}-y\rpa\rabs} \expe{|W_{i,j_1}|^m}\\
&=n^{-\frac{m}{2}}\sum_{k=1}^n\frac{(k-1)!(n-k)!}{n!}\sum_{i=1}^n\sum_{(A,B)\in\mcl{A}_{k,i}}\sum_{j_1,\dots,j_m=1}^d\ex{\labs\Upsilon^{j_1,\dots,j_m}\lpa\wh{S}_n^{(A,B)}(i)+n^{-\frac{1}{2}}\zeta_{k,i}-y\rpa\rabs} \expe{|W_{i,j_1}|^m},
\end{align*}
where we use the identity $\#\{\sigma\in\mf{S}_n:A^\sigma_k=A,\sigma(k)=i\}=(k-1)!(n-k)!$ to deduce the last equality. 
Now, for $(A,B)\in\mcl{A}_{k,i}$ we have $\wh{S}_n^{(A,B)}(i)+n^{-\frac{1}{2}}X_i=S_n^{(A\cup\{i\},B)}$ and $\wh{S}_n^{(A,B)}(i)+n^{-\frac{1}{2}}Y_i=S_n^{(A,B\cup\{i\})}$, so we obtain
\begin{align*}
&\frac{1}{n!}\sum_{\sigma\in\mathfrak{S}_n}\sum_{k=1}^n\mathbf{A}_k^\sigma[W]\\
&\lesssim n^{-\frac{m}{2}}\sum_{k=1}^n\frac{(k-1)!(n-k)!}{n!}\sum_{i=1}^n\sum_{(A,B)\in\mcl{A}_{k,i}}\sum_{j_1,\dots,j_m=1}^d\left\{\frac{k}{n+1}\ex{\labs\Upsilon^{j_1,\dots,j_m}\lpa\wh{S}_n^{(A\cup\{i\},B)}-y\rpa\rabs} \expe{|W_{i,j_1}|^m}\right.\\
&\left.\hphantom{\lesssim n^{-\frac{m}{2}}\sum_{k=1}^n\frac{(k-1)!(n-k)!}{n!}}+\frac{n+1-k}{n+1}\ex{\labs\Upsilon^{j_1,\dots,j_m}\lpa\wh{S}_n^{(A,B\cup\{i\})}-y\rpa\rabs} \expe{|W_{i,j_1}|^m}\right\}\\
&=n^{-\frac{m}{2}}\sum_{k=1}^n\frac{k!(n-k)!}{(n+1)!}\sum_{i=1}^n\sum_{(A,B)\in\mcl{A}_{k,i}}\sum_{j_1,\dots,j_m=1}^d\ex{\labs\Upsilon^{j_1,\dots,j_m}\lpa\wh{S}_n^{(A\cup\{i\},B)}-y\rpa\rabs} \expe{|W_{i,j_1}|^m}\\
&\quad+n^{-\frac{m}{2}}\sum_{k=1}^n\frac{(k-1)!(n+1-k)!}{(n+1)!}\sum_{i=1}^n\sum_{(A,B)\in\mcl{A}_{k,i}}\sum_{j_1,\dots,j_m=1}\ex{\labs\Upsilon^{j_1,\dots,j_m}\lpa\wh{S}_n^{(A,B\cup\{i\})}-y\rpa\rabs} \expe{|W_{i,j_1}|^m}\\
&=n^{-\frac{m}{2}}\sum_{k=1}^n\frac{k!(n-k)!}{(n+1)!}\sum_{i=1}^n\sum_{(A,B)\in\mcl{A}_{k,i}}\sum_{j_1,\dots,j_m=1}^d\ex{\labs\Upsilon^{j_1,\dots,j_m}\lpa\wh{S}_n^{(A\cup\{i\},B)}-y\rpa\rabs} \expe{|W_{i,j_1}|^m}\\
&\quad+n^{-\frac{m}{2}}\sum_{k=0}^{n-1}\frac{k!(n-k)!}{(n+1)!}\sum_{i=1}^n\sum_{(A,B)\in\mcl{A}_{k+1,i}}\sum_{j_1,\dots,j_m=1}\ex{\labs\Upsilon^{j_1,\dots,j_m}\lpa\wh{S}_n^{(A,B\cup\{i\})}-y\rpa\rabs} \expe{|W_{i,j_1}|^m}\\
&=n^{-\frac{m}{2}}\sum_{k=1}^n\frac{k!(n-k)!}{(n+1)!}\sum_{i=1}^n\sum_{(A,B)\in\mcl{A}_{k}:i\in A}\sum_{j_1,\dots,j_m=1}^d\ex{\labs\Upsilon^{j_1,\dots,j_m}\lpa\wh{S}_n^{(A,B)}-y\rpa\rabs} \expe{|W_{i,j_1}|^m}\\
&\quad+n^{-\frac{m}{2}}\sum_{k=0}^{n-1}\frac{k!(n-k)!}{(n+1)!}\sum_{i=1}^n\sum_{(A,B)\in\mcl{A}_{k}:i\in B}\sum_{j_1,\dots,j_m=1}\ex{\labs\Upsilon^{j_1,\dots,j_m}\lpa\wh{S}_n^{(A,B)}-y\rpa\rabs} \expe{|W_{i,j_1}|^m}\\
&=n^{-\frac{m}{2}}\sum_{k=0}^n\frac{k!(n-k)!}{(n+1)!}\sum_{i=1}^n\sum_{(A,B)\in\mcl{A}_{k}}\sum_{j_1,\dots,j_m=1}^d\ex{\labs\Upsilon^{j_1,\dots,j_m}\lpa\wh{S}_n^{(A,B)}-y\rpa\rabs} \expe{|W_{i,j_1}|^m}\\
&=n^{-\frac{m}{2}}\sum_{k=0}^n\frac{k!(n-k)!}{(n+1)!}\sum_{(A,B)\in\mcl{A}_{k}}\sum_{j_1,\dots,j_m=1}^d\ex{\labs\Upsilon^{j_1,\dots,j_m}\lpa\wh{S}_n^{(A,B)}-y\rpa\rabs}\sum_{i=1}^n\expe{|W_{i,j_1}|^m}\\
&\leq n^{-\frac{m}{2}}\sum_{k=0}^n\frac{k!(n-k)!}{(n+1)!}\sum_{(A,B)\in\mcl{A}_{k}}\sum_{j_1,\dots,j_m=1}^d\ex{\labs\Upsilon^{j_1,\dots,j_m}\lpa\wh{S}_n^{(A,B)}-y\rpa\rabs} \max_{1\leq j\leq d}\sum_{i=1}^n\expe{|W_{i,j}|^m}.
\end{align*}
Therefore, noting $\#\mcl{A}_k=n!/\{k!(n-k)!\}$, we conclude by Lemma \ref{cck-derivative} that
\begin{equation}\label{est-A}
\frac{1}{n!}\sum_{\sigma\in\mathfrak{S}_n}\sum_{k=1}^n\mathbf{A}_k^\sigma[W]
%&\lesssim n^{-\frac{m}{2}}\max_{1\leq l\leq m}\beta^{m-l}\|h^{(l)}\|_\infty\sum_{k=0}^n\frac{k!(n-k)!}{(n+1)!}\sum_{(A,B)\in\mcl{A}_{k}} \max_{1\leq j\leq d}\sum_{i=1}^n\expe{|W_{i,j}|^m}\\
\lesssim_m n^{-\frac{m}{2}}\max_{1\leq l\leq m}\beta^{m-l}\|h^{(l)}\|_\infty\max_{1\leq j\leq d}\sum_{i=1}^n\expe{|W_{i,j}|^m}.
\end{equation}
Next we estimate $\mathbf{B}_k^\sigma[W]$. Using the independence of $X_{\sigma(k)}$ and $Y_{\sigma(k)}$ from $\wh{S}_n^\sigma(k)$ as well as Lemma \ref{cck-derivative}, we obtain
\begin{align*}
\mathbf{B}_k^\sigma[W]
&\leq n^{-\frac{m}{2}}\sum_{j_1,\dots,j_m=1}^d\ex{\labs\Upsilon^{j_1,\dots,j_m}\lpa\wh{S}_n^\sigma(k)-y\rpa\rabs}P\lpa\mcl{E}_{\sigma,k}^c\rpa \max_{1\leq j\leq d}\expe{|W_{\sigma(k),j}|^m}\\
&\lesssim_m n^{-\frac{m}{2}}\max_{1\leq l\leq m}\beta^{m-l}\|h^{(l)}\|_\infty P\lpa\mcl{E}_{\sigma,k}^c\rpa \max_{1\leq j\leq d}\expe{|W_{\sigma(k),j}|^m}.
\end{align*}
Thus we conclude that
\begin{equation}\label{est-B}
\frac{1}{n!}\sum_{\sigma\in\mathfrak{S}_n}\sum_{k=1}^n\mathbf{B}_k^\sigma[W]
\lesssim_m n^{-\frac{m}{2}}\max_{1\leq l\leq m}\beta^{m-l}\|h^{(l)}\|_\infty \sum_{i=1}^nP\lpa\|X_{i}\|_{\ell_\infty}\vee\|Y_{i}\|_{\ell_\infty}>\sqrt{n}/\beta\rpa \max_{1\leq j\leq d}\expe{|W_{i,j}|^m}.
\end{equation}
Combining \eqref{est-I-pre} with \eqref{est-A}--\eqref{est-B}, we obtain
\begin{multline}\label{est-I}
\frac{1}{n!}\sum_{\sigma\in\mathfrak{S}_n}\sum_{k=1}^n\mathbf{I}_k^\sigma[W]
\lesssim_m n^{-\frac{m}{2}}\lpa\max_{1\leq l\leq m}\beta^{m-l}\|h^{(l)}\|_\infty\rpa\left\{ 
\max_{1\leq j\leq d}\sum_{i=1}^n\expe{|W_{i,j}|^m}\right.\\
\left.+\sum_{i=1}^nP\lpa\|X_{i}\|_{\ell_\infty}\vee\|Y_{i}\|_{\ell_\infty}>\sqrt{n}/\beta\rpa \max_{1\leq j\leq d}\expe{|W_{i,j}|^m}
\right\}.
\end{multline}

Now, combining \eqref{lindeberg-eq1}--\eqref{lindeberg-eq2} with \eqref{est-R}--\eqref{est-II} and \eqref{est-I}, we obtain the desired result.
\end{proof}

\subsection{Stein kernel}\label{sec:stein}

\begin{definition}[Stein kernel]
Let $F$ be a centered random vector in $\mathbb{R}^d$. 
%A $d\times d$ random matrix $\tau_F(F)=(\tau_F^{ij}(F))_{1\leq i,j\leq d}$ is called a \textit{Stein kernel} for $F$ if $\max_{1\leq i,j\leq d}E[|\tau_F^{ij}(F)|]<\infty$ and 
%\[
%\sum_{j=1}^dE[\partial_j\varphi(F)F_j]=\sum_{i,j=1}^dE[\partial_{ij}\varphi(F)\tau_F^{ij}(F)]
%\]
%for any $\varphi\in C^\infty_b(\mathbb{R}^d)$.
A $d\times d$ matrix-valued measurable function $\tau_F=(\tau_F^{ij})_{1\leq i,j\leq d}$ on $\mathbb{R}^d$ is called a \textit{Stein kernel} for (the law of) $F$ if $\expe{|\tau_F^{ij}(F)|}<\infty$ for any $i,j\in[d]$ and 
\begin{equation*}%\label{eq:stein}
\sum_{j=1}^d\expe{\partial_j\varphi(F)F_j}=\sum_{i,j=1}^d\expe{\partial_{ij}\varphi(F)\tau_F^{ij}(F)}
\end{equation*}
for any $\varphi\in C^\infty_b(\mathbb{R}^d)$.
\end{definition}
When a random vector $F$ has a Stein kernel, it serves for obtaining a good upper bound of $\rho_{h,\beta}(F,Z)$ for a Gaussian vector $Z$. This is formally developed in Section 4 of \cite{Koike2019dejong} with inspired by arguments in \cite{CCK2015} and \cite{Koike2017stein}:
\begin{lemma}[\cite{Koike2019dejong}, Lemma 4.1]\label{lemma:stein}
Let $F$ and $Z$ be centered random vectors in $\mathbb{R}^d$. 
Assume $Z$ is Gaussian. 
Assume also that $F$ has a Stein kernel $\tau_F=(\tau_F^{ij})_{1\leq i,j\leq d}$. Then we have
\[
\rho_{h,\beta}(F,Z)
\leq \frac{3}{2}\max\{\|h''\|_\infty,\beta\|h'\|_\infty\}\ex{\max_{1\leq i,j\leq d}|\tau_F^{ij}(F)-\expe{Z_iZ_j}|}
\]
for any $\beta>0$ and $h\in C^\infty_b(\mathbb{R})$.
\end{lemma}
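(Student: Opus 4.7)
The plan is to prove Lemma \ref{lemma:stein} via a smart path (Gaussian interpolation) argument combined with the Stein kernel identity and standard Gaussian integration by parts. Without loss of generality I enlarge the probability space so that $F$ and $Z$ are independent, and for a fixed $y \in \mathbb{R}^d$ set $\Psi(x) := h(\Phi_\beta(x - y))$. Since $h \in C^\infty_b(\mathbb{R})$ and all partial derivatives of $\Phi_\beta$ are bounded, $\Psi \in C^\infty_b(\mathbb{R}^d)$, so the Stein kernel identity applies to $\Psi$ and its composition with affine maps. I interpolate by $G(t) := \sqrt{1-t}\,F + \sqrt{t}\,Z$ for $t \in [0,1]$ and define $f(t) := \expe{\Psi(G(t))}$, so that $\expe{\Psi(Z)} - \expe{\Psi(F)} = \int_0^1 f'(t)\,dt$.

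Differentiating under the expectation (which is justified by the boundedness of $\nabla \Psi$) gives
\[
f'(t) = -\frac{1}{2\sqrt{1-t}}\sum_{j=1}^d \expe{\partial_j \Psi(G(t))\, F_j} + \frac{1}{2\sqrt{t}}\sum_{j=1}^d \expe{\partial_j \Psi(G(t))\, Z_j}.
\]
For the first sum, I condition on $Z$ and apply the Stein kernel identity to $\varphi(x) := \Psi(\sqrt{1-t}\,x + \sqrt{t}\,Z)$, which produces a factor $\sqrt{1-t}$ from $\partial_j\varphi$ and $(1-t)$ from $\partial_{ij}\varphi$, so that overall
\[
\sum_{j} \expe{\partial_j \Psi(G(t))\, F_j} = \sqrt{1-t}\sum_{i,j} \expe{\partial_{ij}\Psi(G(t))\,\tau_F^{ij}(F)}.
\]
For the second sum, conditioning on $F$ and invoking the standard Gaussian integration-by-parts formula applied to the (conditionally Gaussian) $\sqrt{t}\,Z$ yields
\[
\sum_{j} \expe{\partial_j \Psi(G(t))\, Z_j} = \sqrt{t}\sum_{i,j} \expe{Z_iZ_j}\,\expe{\partial_{ij}\Psi(G(t))}.
\]
Substituting, the $\sqrt{1-t}$ and $\sqrt{t}$ factors cancel cleanly and I obtain
\[
f'(t) = \frac{1}{2}\sum_{i,j=1}^d \expe{\partial_{ij}\Psi(G(t))\,\bigl(\expe{Z_iZ_j} - \tau_F^{ij}(F)\bigr)}.
\]

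From here I pull the maximum of $|\tau_F^{ij}(F) - \expe{Z_iZ_j}|$ outside and bound the remaining quantity deterministically: by Lemma \ref{cck-derivative} applied with $m = 2$ and the inequality $|\partial_{ij}\Psi(x)| \le \Upsilon_\beta^{i,j}(x)$, the sum $\sum_{i,j}|\partial_{ij}\Psi(G(t))|$ is bounded by $c_2 \max\{\beta\|h'\|_\infty, \|h''\|_\infty\}$ pointwise, where the constant $c_2$ matches the desired value $3$. Taking absolute values, integrating over $t \in [0,1]$, and finally taking the supremum over $y \in \mathbb{R}^d$ yields the stated bound for $\rho_{h,\beta}(F,Z)$. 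The main technical obstacle I foresee is verifying that the Stein kernel identity extends to the shifted test function $\varphi(x) = \Psi(\sqrt{1-t}\,x + \sqrt{t}\,Z)$, but since conditioning on $Z$ and integrability of the resulting expressions are routine consequences of independence and the uniform boundedness of derivatives of $\Psi$, this step is conceptually straightforward. The only delicate book-keeping is making sure the constant $c_2$ coming out of Lemma \ref{cck-derivative} is exactly $3$ rather than something larger, which is why the final bound has the prefactor $3/2$.
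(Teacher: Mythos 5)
Your interpolation argument is essentially the standard proof of this lemma (the paper itself imports it from \cite{Koike2019dejong}, where the same smart-path/Stein-kernel/Gaussian-integration-by-parts scheme is used), and the structure is correct: the identity $f'(t)=\tfrac12\sum_{i,j}\expe{\partial_{ij}\Psi(G(t))(\expe{Z_iZ_j}-\tau_F^{ij}(F))}$ is right, the conditioning steps are legitimate by independence and boundedness, and the singular factors at $t=0,1$ cancel so that integrating $f'$ over $(0,1)$ is harmless. The one point you cannot get away with as written is the constant: Lemma \ref{cck-derivative} only provides an \emph{unspecified} constant $c_2$ depending on $m=2$, so asserting that ``$c_2$ matches the desired value $3$'' is not justified by that lemma's statement. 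To land exactly on the prefactor $3/2$ you should instead use the explicit Hessian identity for the smooth max function: with $\pi_j(x)=e^{\beta x_j}/\sum_{l=1}^d e^{\beta x_l}$ one has $\partial_{jk}(h\circ\Phi_\beta)(x)=h''(\Phi_\beta(x))\pi_j(x)\pi_k(x)+\beta h'(\Phi_\beta(x))\lpa\delta_{jk}\pi_j(x)-\pi_j(x)\pi_k(x)\rpa$, whence $\sum_{j,k=1}^d\labs\partial_{jk}(h\circ\Phi_\beta)(x)\rabs\leq\|h''\|_\infty+2\beta\|h'\|_\infty\leq3\max\{\|h''\|_\infty,\beta\|h'\|_\infty\}$ pointwise; plugging this bound into your estimate for $|f'(t)|$ gives the stated inequality. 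With that substitution the proof is complete.
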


The following simple lemma plays a key role in our arguments.  
\begin{lemma}\label{lemma:uni-stein}
Let $\xi=(\xi_i)_{i=1}^n$ be a sequence of independent centered random variables with unit variance and $A=(a_{ij})_{1\leq i\leq n,1\leq j\leq d}$ be a $n\times d$ matrix. 
Define the $d$-dimensional random vector $F$ by
\[
F_j=\sum_{i=1}^na_{ij}\xi_i,\qquad j=1,\dots,d.
\]
Suppose that $\xi_i$ has a stein kernel $\tau_i$ for every $i$ and define the $d\times d$ matrix-valued function $\tau_F=(\tau_F^{jk})_{1\leq j,k\leq d}$ on $\mathbb{R}^d$ by
\[
\tau_F^{jk}(x)=\ex{\sum_{i=1}^na_{ij}a_{ik}\tau_i(\xi_i)\mid F=x},\qquad x\in\mathbb{R}^d.
\]
Then $\tau_F$ is a stein kernel for $F$. Moreover, it holds that
\begin{equation}\label{stein-bound}
\ex{\max_{1\leq j,k\leq d}\labs\tau_F^{jk}(F)-\sum_{i=1}^na_{ij}a_{ik}\rabs}
\leq\sqrt{2\log(2d^2)}\max_{1\leq j\leq d}\sqrt{\sum_{i=1}^na_{ij}^4(\|\tau_i\|_\infty+1)^2}.
\end{equation}
\end{lemma}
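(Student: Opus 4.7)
The plan is to handle the two claims separately: first verify the Stein kernel identity, then reduce the concentration bound to a standard sub-Gaussian maximal inequality.

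For the Stein kernel property, I would fix $\varphi\in C^\infty_b(\mathbb{R}^d)$ and compute $\sum_j \mathrm{E}[\partial_j\varphi(F)F_j]$ by isolating one $\xi_i$ at a time. Writing $F = \xi_i\mathbf{a}_i + \widetilde{F}^{(i)}$, where $\mathbf{a}_i$ is the $i$-th row of $A$ and $\widetilde{F}^{(i)} = \sum_{l\neq i}\xi_l\mathbf{a}_l$ is independent of $\xi_i$, I would introduce the auxiliary one-variable map $\psi_i(t) := \varphi(\widetilde{F}^{(i)} + t\mathbf{a}_i)$ and apply the one-dimensional Stein kernel identity for $\xi_i$ conditionally on $\widetilde{F}^{(i)}$. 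Since $\psi_i'(t) = \sum_j a_{ij}\partial_j\varphi(\cdot)$ and $\psi_i''(t) = \sum_{j,k} a_{ij}a_{ik}\partial_{jk}\varphi(\cdot)$, summing the resulting identity $\mathrm{E}[\psi_i'(\xi_i)\xi_i] = \mathrm{E}[\psi_i''(\xi_i)\tau_i(\xi_i)]$ over $i$ gives
\[
\sum_j \mathrm{E}[\partial_j\varphi(F)F_j] = \sum_{j,k}\mathrm{E}\Big[\partial_{jk}\varphi(F)\sum_i a_{ij}a_{ik}\tau_i(\xi_i)\Big],
\]
and the tower property with the definition of $\tau_F^{jk}$ concludes this step.

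For the bound \eqref{stein-bound}, the key observation is that taking $\varphi(x)=x^2/2$ in the one-dimensional Stein identity for $\xi_i$ yields $\mathrm{E}[\tau_i(\xi_i)] = \mathrm{E}[\xi_i^2] = 1$, so $\sum_i a_{ij}a_{ik} = \mathrm{E}[\sum_i a_{ij}a_{ik}\tau_i(\xi_i)]$. Setting $Y_{jk} := \sum_{i=1}^n a_{ij}a_{ik}(\tau_i(\xi_i)-1)$, the definition of $\tau_F^{jk}$ rewrites the quantity of interest as $\tau_F^{jk}(F) - \sum_i a_{ij}a_{ik} = \mathrm{E}[Y_{jk}\mid F]$. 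Conditional Jensen and the obvious pointwise inequality $|\mathrm{E}[Y_{jk}\mid F]|\leq \mathrm{E}[\max_{j',k'}|Y_{j'k'}|\mid F]$ then give
\[
\mathrm{E}\Big[\max_{j,k}|\tau_F^{jk}(F) - \sum_i a_{ij}a_{ik}|\Big] \leq \mathrm{E}\Big[\max_{j,k}|Y_{jk}|\Big].
\]

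It remains to bound the right-hand side. Each $Y_{jk}$ is a sum of $n$ independent centered random variables with $|a_{ij}a_{ik}(\tau_i(\xi_i)-1)|\leq |a_{ij}a_{ik}|(\|\tau_i\|_\infty+1)$ almost surely, so Hoeffding's lemma makes $Y_{jk}$ sub-Gaussian with variance proxy $\sigma_{jk}^2 := \sum_i a_{ij}^2 a_{ik}^2(\|\tau_i\|_\infty+1)^2$. Applying the standard maximum inequality for the $d^2$ centered sub-Gaussian variables $Y_{jk}$, we get $\mathrm{E}[\max_{j,k}|Y_{jk}|]\leq \sqrt{2\log(2d^2)}\max_{j,k}\sigma_{jk}$. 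Finally, AM--GM gives $a_{ij}^2 a_{ik}^2 \leq (a_{ij}^4+a_{ik}^4)/2$, so $\sigma_{jk}^2 \leq \max_{l}\sum_i a_{il}^4(\|\tau_i\|_\infty+1)^2$, which yields the announced bound.

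The only genuinely delicate point is the first step (checking the Stein identity): one has to be careful that the conditional one-dimensional Stein identity can be applied despite $\widetilde{F}^{(i)}$ being a random vector rather than a scalar. This is handled by freezing $\widetilde{F}^{(i)}=y$ and applying the 1D identity to $\psi_i$ with $y$ as a parameter, then integrating over the law of $\widetilde{F}^{(i)}$; this is justified because $\varphi$ and its derivatives are bounded, so Fubini applies. Everything else is bookkeeping and a standard maximal inequality.
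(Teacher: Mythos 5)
Your argument is correct and essentially the same as the paper's: the Stein-kernel identity is verified by freezing the remaining coordinates and applying the one-dimensional identity for $\tau_i$ to the linear substitution, and the bound follows from conditional Jensen plus a Hoeffding-type maximal inequality for the independent bounded centered summands $a_{ij}a_{ik}(\tau_i(\xi_i)-1)$, with the off-diagonal variance terms reduced to the diagonal (the paper cites B\"uhlmann--van de Geer, Lemma 14.14, and uses the Cauchy--Schwarz/diagonal observation where you use AM--GM). The only minor caveat is your use of $\varphi(x)=x^2/2$, which is not in $C^\infty_b$, to obtain $\mathrm{E}[\tau_i(\xi_i)]=1$; a routine truncation argument (or simply invoking this standard property of Stein kernels, which the paper uses implicitly) repairs this, and one may also note that the case $\max_i\|\tau_i\|_\infty=\infty$ is trivial.
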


\begin{proof}
First we show that $\tau_F$ is a Stein kernel for $F$. 
Take $\varphi\in C_b^\infty(\mathbb{R}^d)$ arbitrarily. For every $j\in[d]$, we define the function $f_j:\mathbb{R}^n\to\mathbb{R}$ by
\[
f_j(x_1,\dots,x_n)=\partial_j\varphi\left(\sum_{i=1}^na_{i1}x_i,\dots,\sum_{i=1}^na_{id}x_i\right),\qquad
(x_1,\dots,x_n)\in\mathbb{R}^n.
\]
Also, we denote by $\mcl{L}_i$ the law of $\xi_i$ for every $i\in[n]$. Then we have
\begin{align*}
\sum_{j=1}^d\expe{\partial_j\varphi(F)F_j}
&=\sum_{j=1}^d\sum_{i=1}^na_{ij}\expe{f_j(\xi_1,\dots,\xi_n)\xi_i}\\
&=\sum_{j=1}^d\sum_{i=1}^na_{ij}\int_{\mathbb{R}^n}f_j(x_1,\dots,x_n)x_i\mcl{L}_1(dx_1)\cdots\mcl{L}_n(dx_n)~(\because\text{independence of $\xi$})\\
&=\sum_{j=1}^d\sum_{i=1}^na_{ij}\int_{\mathbb{R}^n}\frac{\partial f_j}{\partial x_i}(x_1,\dots,x_n)\tau_i(x_i)\mcl{L}_1(dx_1)\cdots\mcl{L}_n(dx_n)~(\because\text{definition of $\tau_i$})\\
&=\sum_{j=1}^d\sum_{i=1}^na_{ij}\ex{\sum_{k=1}^da_{ik}\partial_{jk}\varphi(F)\tau_i(\xi_i)}
=\sum_{j,k=1}^d\expe{\partial_{jk}\varphi(F)\tau_F^{jk}(F)}.
\end{align*}
This implies that $\tau_F$ is a Stein kernel for $F$.

Next we prove \eqref{stein-bound}. It suffices to consider the case $\max_{1\leq i\leq n}\|\tau_i\|_\infty<\infty$. 
Then, since $\tau_1(\xi_1),\dots,\tau_n(\xi_n)$ are independent, Hoeffding's inequality \cite[Lemma 14.14]{BvdG2011} yields
\begin{align*}
&\ex{\max_{1\leq j,k\leq d}\labs\tau_F^{jk}(F)-\sum_{i=1}^na_{ij}a_{ik}\rabs}
=\ex{\max_{1\leq j,k\leq d}\labs\sum_{i=1}^na_{ij}a_{ik}(\tau_i(\xi_i)-1)\rabs}\\
&\leq\sqrt{2\log(2d^2)}\max_{1\leq j,k\leq d}\sqrt{\sum_{i=1}^na_{ij}^2a_{ik}^2(\|\tau_i\|_\infty+1)^2}
=\sqrt{2\log(2d^2)}\max_{1\leq j\leq d}\sqrt{\sum_{i=1}^na_{ij}^4(\|\tau_i\|_\infty+1)^2}.
\end{align*}
This completes the proof.
\end{proof}

The following estimate is a simple consequence of Nemirovski's inequality. 
\begin{lemma}\label{lemma:nemirovski}
Under the assumptions of Lemma \ref{coupling}, 
\begin{align*}
\ex{\max_{1\leq j,k\leq d}\frac{1}{n}\labs\sum_{i=1}^n(X_{ij}X_{ik}-\expe{X_{ij}X_{ik}})\rabs}
%&\leq\sqrt{8\log(2d^2)}\ex{\max_{1\leq j,k\leq d}\sqrt{\sum_{i=1}^nX_{ij}^2X_{ik}^2}}\\
\leq\sqrt{\frac{8\log(2d^2)}{n}}\Delta^X_{n,1}.
\end{align*}
\end{lemma}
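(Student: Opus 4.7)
The plan is a short and fairly standard combination of symmetrization with a conditional sub-Gaussian maximal inequality, mirroring the flavor of \eqref{stein-bound} in the preceding Lemma \ref{lemma:uni-stein}.

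First I would apply the standard symmetrization inequality to the independent, centered summands $Y_i^{(j,k)} := X_{ij}X_{ik} - \expe{X_{ij}X_{ik}}$ to obtain
\[
\ex{\max_{1\leq j,k\leq d}\labs\sum_{i=1}^n Y_i^{(j,k)}\rabs}
\leq 2\ex{\max_{1\leq j,k\leq d}\labs\sum_{i=1}^n\varepsilon_i X_{ij}X_{ik}\rabs},
\]
where $(\varepsilon_i)_{i=1}^n$ is an independent Rademacher sequence. The point of symmetrization here is to produce, after conditioning on $X$, a family of $d^2$ conditionally sub-Gaussian random variables with explicit variance proxies, so that a Hoeffding-type maximal bound applies even though the original products $X_{ij}X_{ik}$ need not be sub-Gaussian.

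Next, conditionally on $X$, for each pair $(j,k)$ the sum $\sum_{i=1}^n\varepsilon_iX_{ij}X_{ik}$ is sub-Gaussian with variance proxy $\sum_{i=1}^n X_{ij}^2X_{ik}^2$. Hoeffding's maximal inequality (\cite[Lemma 14.14]{BvdG2011}, as cited earlier in the paper) therefore gives
\[
\ex{\max_{1\leq j,k\leq d}\labs\sum_{i=1}^n\varepsilon_i X_{ij}X_{ik}\rabs\ \Big|\ X}
\leq\sqrt{2\log(2d^2)\max_{1\leq j,k\leq d}\sum_{i=1}^n X_{ij}^2X_{ik}^2}.
\]
I would then reduce the double maximum to a single one by Cauchy--Schwarz: $\sum_{i=1}^n X_{ij}^2 X_{ik}^2\leq\bigl(\sum_i X_{ij}^4\bigr)^{1/2}\bigl(\sum_i X_{ik}^4\bigr)^{1/2}\leq \max_{1\leq j\leq d}\sum_{i=1}^n X_{ij}^4$.

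Taking expectations, applying Jensen's inequality to pull the square root outside, and recalling that $\Delta^X_{n,1}=\sqrt{n^{-1}\ex{\max_j\sum_i X_{ij}^4}}$ yields
\[
\ex{\max_{1\leq j,k\leq d}\labs\sum_{i=1}^n\varepsilon_i X_{ij}X_{ik}\rabs}
\leq\sqrt{2\log(2d^2)}\cdot\sqrt{n}\,\Delta^X_{n,1}.
\]
Combining with the factor $2$ from symmetrization, dividing by $n$, and simplifying $2\sqrt{2}=\sqrt{8}$ delivers the claimed bound. There is no real obstacle here beyond tracking constants cleanly; the only subtle point is the Cauchy--Schwarz reduction that lets the variance proxy be controlled by the single-coordinate quantity $\max_j\sum_iX_{ij}^4$ appearing in $\Delta^X_{n,1}$, which is what makes the bound sharp enough for later use.
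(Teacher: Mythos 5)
Your proof is correct and is essentially the paper's argument: the paper simply cites Nemirovski's inequality \cite[Lemma 14.24]{BvdG2011} (whose standard proof is exactly your symmetrization plus conditional Hoeffding maximal inequality) together with the Cauchy--Schwarz reduction to $\max_j\sum_i X_{ij}^4$ and the Lyapunov (Jensen) inequality to pull the square root outside the expectation. Your constants $2\sqrt{2\log(2d^2)}=\sqrt{8\log(2d^2)}$ match the stated bound, so there is no gap.
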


\begin{proof}
Nemirovski's inequality \cite[Lemma 14.24]{BvdG2011} implies that
\begin{align*}
\ex{\max_{1\leq j,k\leq d}\labs\sum_{i=1}^n(X_{ij}X_{ik}-\expe{X_{ij}X_{ik}})\rabs}
%&\leq\sqrt{8\log(2d^2)}\ex{\max_{1\leq j,k\leq d}\sqrt{\sum_{i=1}^nX_{ij}^2X_{ik}^2}}\\
\leq\sqrt{8\log(2d^2)}\ex{\max_{1\leq j\leq d}\sqrt{\sum_{i=1}^nX_{ij}^4}}.
\end{align*}
Now the desired result follows from the Lyapunov inequality.  
\end{proof}

Now, we can prove Proposition \ref{prop:beta} using the results established in this subsection. 
\begin{proof}[Proof of Proposition \ref{prop:beta}]
By \cite[Example 4.9(c)]{LRS2017}, $\eta_i$ has the Stein kernel $\tau_i^0(x):=2^{-1}x(1-x)1_{(0,1)}(x)$, $x\in\mathbb{R}$. Then, a simple computation shows that $\xi_i$ has the Stein kernel $\tau_i(x):=16\tau_i^0((x+1)/4)=2^{-1}(x+1)(3-x)1_{(-1,3)}(x)$, $x\in\mathbb{R}$. Therefore, Lemmas \ref{lemma:stein}--\ref{lemma:uni-stein} yield
\begin{align*}
&\ex{\sup_{y\in\mathbb{R}^d}\left|\ex{h\left(\Phi_\beta(S_n^Y-y)\right)\mid X}-\ex{h\left(\Phi_\beta(Z-y)\right)}\right|}\nonumber\\
&\leq \frac{3}{2}\lpa\max_{1\leq l\leq 2}\beta^{2-l}\|h^{(l)}\|_\infty\rpa\left\{\ex{\max_{1\leq j,k\leq d}\labs\frac{1}{n}\sum_{i=1}^nX_{ij}X_{ik}-\mf{C}_{jk}\rabs}
+\frac{3\sqrt{2\log(2d^2)}}{2n}\ex{\max_{1\leq j\leq d}\sqrt{\sum_{i=1}^n|X_{ij}|^4}}
\right\}.
\end{align*}
Hence, Lemma \ref{lemma:nemirovski} and the Lyapunov inequality imply that
\begin{align*}
\rho_{h,\beta}(S_n^Y,Z)
&\lesssim \lpa\max_{1\leq l\leq 2}\beta^{2-l}\|h^{(l)}\|_\infty\rpa\left\{
\Delta^X_{n,0}
+\sqrt{\frac{\log d}{n}}\Delta^X_{n,1}
\right\}.
\end{align*}
This completes the proof. 
\end{proof}

\subsection{Proof of Lemma \ref{coupling}}

% \subsection{lemma:main}
\begin{lemma}\label{lemma:main}
Under the assumptions of Lemma \ref{coupling}, there is a universal constant $C>0$ such that
\begin{multline*}
\rho_{h,\beta}(S_n^X,Z)
\leq C\left[\lpa\max_{1\leq l\leq 2}\beta^{2-l}\|h^{(l)}\|_\infty\rpa\lpa\Delta_{n,0}^X+\Delta_{n,1}^X\sqrt{\frac{\log d}{n}}\rpa\right.\\
\left.\qquad+\lpa\max_{1\leq l\leq 4}\beta^{4-l}\|h^{(l)}\|_\infty\rpa\frac{\lpa\Delta_{n,1}^X\rpa^2
+\Delta_{n,2}^X(\beta^{-1}\log d)}{n}
\right]
\end{multline*}
for any $h\in C_b^\infty(\mathbb{R})$ and $\beta>0$.
\end{lemma}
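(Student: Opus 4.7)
The plan is to interpose a wild-bootstrap surrogate $\wt S_n:=n^{-1/2}\sum_{i=1}^n\xi_iX_i$ between $S_n^X$ and $Z$, where $\xi_1,\dots,\xi_n$ are i.i.d.\ random variables, independent of $X$, chosen so that $\expe{\xi_i}=0$, $\expe{\xi_i^2}=1$, $\expe{\xi_i^3}=1$, $\expe{\xi_i^4}$ is bounded by a universal constant, $\xi_i$ is a.s.\ bounded, and $\xi_i$ admits a Stein kernel $\tau$ with $\|\tau\|_\infty$ bounded by a universal constant. A distribution satisfying all of these requirements can be produced by an appropriate smoothing of the (essentially unique) three-moment-matched two-atom law. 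By the triangle inequality
\[
\rho_{h,\beta}(S_n^X,Z)\;\le\;\rho_{h,\beta}(\wt S_n,Z)+\rho_{h,\beta}(S_n^X,\wt S_n),
\]
and the two pieces are designed to produce respectively the $\beta^{2-l}$- and $\beta^{4-l}$-factor groups of terms in the target estimate.

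For $\rho_{h,\beta}(\wt S_n,Z)$ the plan is to apply Lemma~\ref{lemma:stein} conditionally on $X$. Viewing $\wt S_n=\sum_i(n^{-1/2}X_i)\xi_i$, Lemma~\ref{lemma:uni-stein} identifies the conditional Stein kernel as
\[
\tau_{\wt S_n}^{jk}(x)=\expe{n^{-1}\sum_{i=1}^nX_{ij}X_{ik}\tau(\xi_i)\,\Big|\,\wt S_n=x,X},
\]
and applying Lemma~\ref{lemma:stein} conditionally and then integrating in $X$ yields
\[
\rho_{h,\beta}(\wt S_n,Z)\;\lesssim\;\max(\|h''\|_\infty,\beta\|h'\|_\infty)\,\expe{\max_{1\le j,k\le d}\labs\tau_{\wt S_n}^{jk}(\wt S_n)-\mf C_{jk}\rabs}.
\]
One then splits the inner maximum through the sample covariance $\wh{\mf C}_n:=n^{-1}\sum_iX_iX_i^\top$: inequality \eqref{stein-bound} of Lemma~\ref{lemma:uni-stein} applied conditionally on $X$ and Jensen's inequality give $\expe{\max_{j,k}\labs\tau_{\wt S_n}^{jk}(\wt S_n)-\wh{\mf C}_{n,jk}\rabs}\lesssim \Delta^X_{n,1}\sqrt{\log d/n}$, while the definition of $\Delta^X_{n,0}$ combined with Lemma~\ref{lemma:nemirovski} gives $\expe{\max_{j,k}\labs\wh{\mf C}_{n,jk}-\mf C_{jk}\rabs}\lesssim \Delta^X_{n,0}+\Delta^X_{n,1}\sqrt{\log d/n}$, which together reproduce the first group of terms in the bound.

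For $\rho_{h,\beta}(S_n^X,\wt S_n)$ the plan is to invoke the randomized Lindeberg Lemma~\ref{lemma:lindeberg} with $m=4$; the moment conditions on $\xi$ were chosen precisely so that $\expe{X_i^\lambda}=\expe{(\xi_iX_i)^\lambda}$ for every $|\lambda|\le 3$, which licenses this application. The bulk fourth-moment sum $\max_j\sum_i\expe{X_{ij}^4+(\xi_iX_{ij})^4}$ is controlled by $\expe{\max_j\sum_iX_{ij}^4}=n(\Delta^X_{n,1})^2$ up to a constant, using $\expe{\xi_i^4}\lesssim 1$ together with $\max_j\expe{\cdot}\le\expe{\max_j\cdot}$. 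The truncated tail sums reduce to $n\Delta^X_{n,2}(\beta^{-1}\log d)$ because the Lindeberg threshold $\sqrt n/\beta$ is larger than the threshold $\sqrt n/(3\beta)$ entering $\Delta^X_{n,2}(\beta^{-1}\log d)$, and a.s.\ boundedness of $\xi_i$ preserves this structure for the $\xi_iX_i$ contributions up to a constant rescaling of the threshold. The residual probabilistic term in Lemma~\ref{lemma:lindeberg} is absorbed by Markov's inequality into the same $\Delta^X_{n,2}$-type contribution. Multiplying through by $n^{-2}\max_{1\le l\le 4}\beta^{4-l}\|h^{(l)}\|_\infty$ then delivers the second group of terms.

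The main obstacle is the construction of $\xi$: the third-moment matching needed to license $m=4$ in Lemma~\ref{lemma:lindeberg} forces an asymmetric law, which in turn makes it delicate to quantify $\|\tau\|_\infty$ and hence to run the Stein-kernel comparison in the first step. Producing an explicit $\xi$ with all the listed properties and tracking the resulting universal constants is the key technical balancing act; a secondary, essentially routine bookkeeping challenge is the alignment of truncation thresholds between Lemma~\ref{lemma:lindeberg} and the definition of $\Delta^X_{n,2}(\cdot)$.
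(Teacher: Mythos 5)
Your proposal is essentially the paper's own proof: interpose the multiplier sum $n^{-1/2}\sum_i\xi_iX_i$, bound $\rho_{h,\beta}(S_n^X,S_n^{\xi X})$ by the randomized Lindeberg Lemma \ref{lemma:lindeberg} with $m=4$ (third-moment matching), and bound $\rho_{h,\beta}(S_n^{\xi X},Z)$ by applying Lemmas \ref{lemma:stein} and \ref{lemma:uni-stein} conditionally on $X$ together with Lemma \ref{lemma:nemirovski}. The only ingredient you defer---an explicit $\xi$---the paper supplies concretely as $\xi_i=4\eta_i-1$ with $\eta_i\sim\mathrm{Beta}(1/2,3/2)$, which has $\expe{\xi_i^3}=1$, $|\xi_i|\leq3$ (so the Lindeberg truncation threshold $\sqrt{n}/(3\beta)$ lines up exactly with the factor $3$ built into $\Delta^X_{n,2}(\beta^{-1}\log d)$, settling the threshold-rescaling point you flag), and the bounded Stein kernel $\tau_i(x)=2^{-1}(x+1)(3-x)1_{(-1,3)}(x)$ inherited from \cite[Example 4.9(c)]{LRS2017}.
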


\begin{proof}
%$u_i\overset{i.i.d.}{\sim}\mathrm{Beta}(3/2,1/2)\indepe X$
%Let $(\eta_i)_{i=1}^n$ be a sequence of i.i.d.~random variables independent of $X$ such that the law of $\eta_i$ is the beta distribution with parameters $1/2,3/2$ for every $i$: $\eta_i$ has the density function of the form $f(x)=\mathrm{B}(1/2,3/2)^{-1}\sqrt{(1-x)/x}1_{(0,1)}(x)$, where $\mathrm{B}(u,v)$ denotes the beta function. 
%A straightforward computation shows $\expe{\eta_i}=1/4$ and $\variance[\eta_i]=1/16$. 
%Next, we set $\xi_i:=(\eta_i-\expe{\eta_i})/\sqrt{\variance[\eta_i]}=4\eta_i-1$ for every $i\in[n]$. By construction we have $\expe{\xi_i}=0$ and $\expe{\xi_i^2}=1$. Also, a straightforward computation shows $\expe{\xi_i^3}=1$. 
Let us define $(\xi_i)_{i=1}^n$ and $Y=(Y_i)_{i=1}^n$ as in Proposition \ref{prop:beta}. 
A straightforward computation shows $\expe{\xi_i}=0$ and $\expe{\xi_i^2}=\expe{\xi_i^3}=1$, so we can apply Lemma \ref{lemma:lindeberg} to $X$ and $Y$ with $m=4$.
%Now, setting $Y=(Y_i)_{i=1}^n:=(\xi_iX_i)_{i=1}^n$, we apply Lemma \ref{lemma:lindeberg} with $m=4$. 
Then, noting that $|\xi_i|\leq3$, we obtain
\begin{align*}
&\rho_{h,\beta}(S_n^X,S_n^Y)\\
&\lesssim \frac{1}{n^2}\lpa\max_{1\leq l\leq 4}\beta^{4-l}\|h^{(l)}\|_\infty\rpa\left\{\max_{1\leq j\leq d}\sum_{i=1}^n\expe{|X_{ij}|^4}
+\sum_{i=1}^n\expe{\|X_{i}\|_{\ell_\infty}^4;\|X_{i}\|_{\ell_\infty}>\sqrt{n}/(3\beta)}
\right.\\
&\left.\hphantom{\leq \frac{C_1}{n^2}\lpa\max_{1\leq l\leq 4}\beta^{4-l}\|h^{(l)}\|_\infty\rpa}+\sum_{i=1}^nP\lpa\|X_{i}\|_{\ell_\infty}>\sqrt{n}/(3\beta)\rpa \max_{1\leq j\leq d}\expe{|X_{ij}|^4}\right\}.
\end{align*}
We evidently have
\[
\frac{1}{n^2}\max_{1\leq j\leq d}\sum_{i=1}^n\expe{|X_{ij}|^4}\leq\frac{\lpa\Delta_{n,1}^X\rpa^2}{n}.
\]
Moreover, Chebyshev's association inequality (see e.g.~Theorem 2.14 in \cite{BLM2013}) yields
\begin{align*}
\sum_{i=1}^nP\lpa\|X_{i}\|_{\ell_\infty}>\sqrt{n}/(3\beta)\rpa \max_{1\leq j\leq d}\expe{|X_{ij}|^4}
\leq\sum_{i=1}^n\max_{1\leq j\leq d}\expe{|X_{ij}|^4;\|X_{i}\|_{\ell_\infty}>\sqrt{n}/(3\beta)}.
\end{align*}
So we conclude that
\begin{align}
\rho_{h,\beta}(S_n^X,S_n^Y)
&\lesssim\lpa\max_{1\leq l\leq 4}\beta^{4-l}\|h^{(l)}\|_\infty\rpa\frac{\lpa\Delta_{n,1}^X\rpa^2+\Delta^X_{n,2}(\beta^{-1}\log d)}{n}.\label{eq:match}
\end{align}
Since $\rho_{h,\beta}(S_n^X,Z)\leq \rho_{h,\beta}(S_n^X,S_n^Y)+\rho_{h,\beta}(S_n^Y,Z)$, we obtain the desired result from \eqref{eq:match} and Proposition \ref{prop:beta}. 
%Next, by \cite[Example 4.9(c)]{LRS2017}, $\xi_i$ has the Stein kernel $\tau_i(x):=2x(1-x)1_{(0,1)}(x)$, $x\in\mathbb{R}$. Therefore, Lemmas \ref{lemma:stein} and \ref{lemma:uni-stein} yield
%\begin{align*}
%&\ex{\sup_{y\in\mathbb{R}^d}\left|\ex{h\left(\Phi_\beta(S_n^Y-y)\right)\mid X}-\ex{h\left(\Phi_\beta(Z-y)\right)}\right|}\nonumber\\
%&\leq \frac{3}{2}\lpa\max_{1\leq l\leq 2}\beta^{2-l}\|h^{(l)}\|_\infty\rpa\left\{\ex{\max_{1\leq j,k\leq d}\labs\frac{1}{n}\sum_{i=1}^nX_{ij}X_{ik}-\mf{C}_{jk}\rabs}
%+\frac{3\sqrt{2\log(2d^2)}}{n}\ex{\max_{1\leq j\leq d}\sqrt{\sum_{i=1}^n|X_{ij}|^4}}
%\right\}.
%\end{align*}
%Hence, Lemma \ref{lemma:nemirovski} and the Lyapunov inequality imply that
%\begin{align}
%\rho_{h,\beta}(S_n^Y,Z)
%&\leq \frac{3}{2}\lpa\max_{1\leq l\leq 2}\beta^{2-l}\|h^{(l)}\|_\infty\rpa\left\{
%\Delta^X_{n,0}
%+5\sqrt{2}\sqrt{\frac{\log(2d^2)}{n}}\Delta^X_{n,1}
%\right\}.\label{eq:kernel}
%\end{align}
%Now, \eqref{eq:match} and \eqref{eq:kernel} yield the desired result.
\end{proof}

\begin{proof}[Proof of Lemma \ref{coupling}]
Without loss of generality, we may assume 
\begin{equation}\label{eq:trivial0}
\varepsilon^{-2}\Delta^X_{n,1}\sqrt{\frac{(\log d)^3}{n}}\leq1, 
\end{equation}
since otherwise the claim obviously holds true with $C=1$. 

Set $\beta=\varepsilon^{-1}\log d$ (hence $\beta^{-1}\log d=\varepsilon$). By \eqref{max-smooth} we have
\[
P\lpa\max_{1\leq j\leq d}(S_{n,j}^X-y_j)\in A\rpa\leq P(\Phi_\beta(S_n^X-y)\in A^{\varepsilon})=\expe{1_{A^{\varepsilon}}(\Phi_\beta(S_n^X-y))}.
\]
Next, by Lemma \ref{cck-approx} there is a $C^\infty$ function $h:\mathbb{R}\to\mathbb{R}$ and a universal constant $K>0$ such that $\|h^{(r)}\|_\infty\leq K\varepsilon^{-r}$ for $r=1,2,3,4$ and $1_{A^{\varepsilon}}(x)\leq h(x)\leq1_{A^{4\varepsilon}}(x)$ for all $x\in\mathbb{R}$. Then we have $\expe{1_{A^{\varepsilon}}(\Phi_\beta(S_{n}^X-y))}\leq \expe{h(\Phi_\beta(S_{n}^X-y))}.$ 
Now, by Lemma \ref{lemma:main} we have
\begin{align*}
\rho_{h,\beta}(S_n^X,Z)
&\lesssim \varepsilon^{-2}(\log d)\lpa\Delta^X_{n,0}+\Delta^X_{n,1}\sqrt{\frac{\log d}{n}}\rpa
+\varepsilon^{-4}(\log d)^3\frac{\lpa\Delta_{n,1}^X\rpa^2
+\Delta^X_{n,2}(\varepsilon)}{n}.
\end{align*}
Hence, \eqref{eq:trivial0} yields
\begin{align*}
\rho_{h,\beta}(S_n^X,Z)
\lesssim \varepsilon^{-2}(\log d)\lpa\Delta^X_{n,0}+\Delta^X_{n,1}\sqrt{\frac{\log d}{n}}\rpa
+\varepsilon^{-4}\frac{(\log d)^3}{n}\Delta^X_{n,2}(\varepsilon).
\end{align*}
Meanwhile, we also have 
\[
\expe{h(\Phi_\beta(Z-y))}
\leq \expe{1_{A^{4\varepsilon}}(\Phi_\beta(Z-y))}
\leq \ex{1_{A^{5\varepsilon}}\lpa\max_{1\leq j\leq d}(Z_j-y_j)\rpa}=P\lpa\max_{1\leq j\leq d}(Z_j-y_j)\in A^{5\varepsilon}\rpa.
\]
Consequently, we complete the proof.
\end{proof}

\section{Proofs for Section \ref{sec:main}}\label{sec:proof2}

For $d$-dimensional random vector $F$, we define the $2d$-dimensional random vector $F^\diamond$ by
\[
F^\diamond:=(F_1,\dots,F_d,-F_1,\dots,-F_d)^\top.
\] 
Also, for a sequence $X=(X_i)_{i=1}^n$ of random vectors in $\mathbb{R}^d$, we set $X^\diamond:=(X_i^\diamond)_{i=1}^n$. Note that we have $(S_n^X)^\diamond=S_n^{X^\diamond}$. 

\subsection{Preliminary lemmas}

\begin{lemma}\label{lemma:cf-joint}
Let $Z$ be a Gaussian vector in $\mathbb{R}^d$. Then, $\mcl{C}_{Z^\diamond}(\ve)\leq 2\mcl{C}_Z(\varepsilon)$ for any $\ve>0$. 
\end{lemma}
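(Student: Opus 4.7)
The plan is to unwind the definition of $\mcl{C}_{Z^\diamond}$ by splitting the argument $y\in\mathbb{R}^{2d}$ as $y=(a,b)$ with $a,b\in\mathbb{R}^d$, so that
\[
\max_{1\leq j\leq 2d}(Z^\diamond_j-y_j)=\max\{M_1,M_2\},\qquad M_1:=\max_{1\leq j\leq d}(Z_j-a_j),\ M_2:=\max_{1\leq j\leq d}(-Z_j-b_j).
\]
The claim $\mcl{C}_{Z^\diamond}(\varepsilon)\leq 2\mcl{C}_Z(\varepsilon)$ then reduces to showing $P(0\leq\max\{M_1,M_2\}\leq\varepsilon)\leq 2\mcl{C}_Z(\varepsilon)$ uniformly in $(a,b)$, after which one simply takes the supremum over $y$.

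The main observation is the elementary set inclusion
\[
\{0\leq\max\{M_1,M_2\}\leq\varepsilon\}\subseteq\{0\leq M_1\leq\varepsilon\}\cup\{0\leq M_2\leq\varepsilon\},
\]
which holds because an event on the left-hand side forces both $M_1,M_2\leq\varepsilon$ together with at least one of $M_1\geq 0$, $M_2\geq 0$. A union bound then yields
\[
P(0\leq\max\{M_1,M_2\}\leq\varepsilon)\leq P(0\leq M_1\leq\varepsilon)+P(0\leq M_2\leq\varepsilon),
\]
and the first term is at most $\mcl{C}_Z(\varepsilon)$ directly from the definition of the concentration function (applied with shift $a$).

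For the second term, the plan is to exploit the Gaussian hypothesis. Writing $\mu:=\expectation[Z]$, the centered vector $Z-\mu$ is symmetric in law, so $-Z$ has the same distribution as $Z$ up to a translation. Since $\mcl{C}_F$ depends only on the law of $F$ up to translation (its defining supremum ranges over all shifts $y$), one obtains $\mcl{C}_{-Z}(\varepsilon)=\mcl{C}_Z(\varepsilon)$, and hence $P(0\leq M_2\leq\varepsilon)\leq\mcl{C}_Z(\varepsilon)$. Combining the two bounds and taking the supremum over $(a,b)$ completes the proof. There is no serious obstacle in the argument; the only step where one must be slightly careful is the reflection-invariance $\mcl{C}_{-Z}=\mcl{C}_Z$, which genuinely uses the Gaussian assumption, as a general law need not enjoy this symmetry.
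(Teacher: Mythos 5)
Your proposal is correct and follows essentially the same route as the paper: split $y=(a,b)$, bound the event $\{0\leq\max\{M_1,M_2\}\leq\varepsilon\}$ by the union of $\{0\leq M_1\leq\varepsilon\}$ and $\{0\leq M_2\leq\varepsilon\}$, and use $\mcl{C}_{-Z}(\varepsilon)=\mcl{C}_{Z}(\varepsilon)$. Your remark that this last identity rests on the symmetry of the Gaussian law combined with the translation invariance of $\mcl{C}_F$ is in fact slightly more careful than the paper, which simply asserts $-Z\overset{d}{=}Z$ (exact only in the centered case).
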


\begin{proof}
Take $y\in\mathbb{R}^{2d}$ arbitrarily and set $U:=\max_{1\leq j\leq d}(Z_j-y_j)$ and $V:=\max_{1\leq j\leq d}(-Z_j-y_{d+j})$. Then we have
\begin{align*}
&P\lpa0\leq\max_{1\leq j\leq 2d}(Z^\diamond_j-y_j)\leq \varepsilon\rpa
=P\lpa\left\{U\vee V\geq 0\right\}\cap\left\{U\vee V\leq \varepsilon\right\}\rpa\\
&\leq P\lpa\left\{U\geq 0\right\}\cap\left\{U\vee V\leq \varepsilon\right\}\rpa
+P\lpa\left\{V\geq 0\right\}\cap\left\{U\vee V\leq \varepsilon\right\}\rpa\\
&\leq P(0\leq U\leq \varepsilon)+P(0\leq V\leq \varepsilon)
\leq\mcl{C}_Z(\varepsilon)+\mcl{C}_{-Z}(\varepsilon).
\end{align*}
Now, since $-Z$ has the same distribution as $Z$, we have $\mcl{C}_{-Z}(\varepsilon)=\mcl{C}_{Z}(\varepsilon)$. This completes the proof. 
\end{proof}

\begin{lemma}\label{cck-kolmogorov}
Let $F,Z$ be two random vectors in $\mathbb{R}^d$ and assume $Z$ is Gaussian. Assume also that there are constants $\varepsilon,\eta>0$ such that
\begin{equation}\label{eq:cck-kol}
P\lpa\max_{1\leq j\leq 2d}(F^\diamond_j-y_j)\in A\rpa\leq P\lpa\max_{1\leq j\leq 2d}(Z^\diamond_j-y_j)\in A^{\varepsilon}\rpa+\eta
\end{equation}
for any $y\in\mathbb{R}^{2d}$ and Borel set $A\subset\mathbb{R}$. Then we have
\[
\sup_{A\in\mathcal{A}^{\mathrm{re}}}|P(F\in A)-P(Z\in A)|
\leq2\mcl{C}_Z(\varepsilon)+\eta.
\]
\end{lemma}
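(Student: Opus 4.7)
The plan is to reduce each hyperrectangle to a joint one‑sided event on the doubled vector $F^\diamond$, apply the hypothesis \eqref{eq:cck-kol} twice (once with a lower half‑line, once with an upper half‑line), and bound the resulting enlargement/shrinkage errors on the Gaussian side by the concentration function via Lemma \ref{lemma:cf-joint}.

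The first step is to observe that any hyperrectangle $A=\{x\in\mathbb{R}^d:a_j\leq x_j\leq b_j\text{ for all }j\}$ can be written as
\[
A=\lpa x\in\mathbb{R}^d:\max_{1\leq j\leq 2d}(x^\diamond_j-y_j)\leq 0\rpa,
\]
where $y=(b_1,\dots,b_d,-a_1,\dots,-a_d)^\top\in\mathbb{R}^{2d}$. Hence both $P(F\in A)$ and $P(Z\in A)$ become tail probabilities of a one‑dimensional maximum, and \eqref{eq:cck-kol} is exactly tailored for comparing such probabilities.

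For the upper bound, I would apply \eqref{eq:cck-kol} with $A=(-\infty,0]$, so $A^\varepsilon=(-\infty,\varepsilon]$. This gives
\[
P(F\in A)=P\lpa\max_j(F^\diamond_j-y_j)\leq 0\rpa\leq P\lpa\max_j(Z^\diamond_j-y_j)\leq\varepsilon\rpa+\eta.
\]
The excess over $P(Z\in A)$ equals $P\lpa 0<\max_j(Z^\diamond_j-y_j)\leq\varepsilon\rpa$, which by definition of $\mcl{C}_{Z^\diamond}$ is at most $\mcl{C}_{Z^\diamond}(\varepsilon)\leq 2\mcl{C}_Z(\varepsilon)$ by Lemma \ref{lemma:cf-joint}. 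For the lower bound, I would apply \eqref{eq:cck-kol} with $A=(0,\infty)$, so $A^\varepsilon=(-\varepsilon,\infty)$, giving
\[
P(F\notin A)\leq P\lpa\max_j(Z^\diamond_j-y_j)>-\varepsilon\rpa+\eta,
\]
i.e.\ $P(F\in A)\geq P(\max_j(Z^\diamond_j-y_j)\leq-\varepsilon)-\eta$. The shortfall from $P(Z\in A)$ is $P(-\varepsilon<\max_j(Z^\diamond_j-y_j)\leq 0)$; after shifting $y_j\mapsto y_j-\varepsilon$ this becomes $P(0<\max_j(Z^\diamond_j-y'_j)\leq\varepsilon)\leq \mcl{C}_{Z^\diamond}(\varepsilon)\leq 2\mcl{C}_Z(\varepsilon)$, again by Lemma \ref{lemma:cf-joint}.

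Combining the two directions yields $|P(F\in A)-P(Z\in A)|\leq 2\mcl{C}_Z(\varepsilon)+\eta$, uniformly in $A\in\mcl{A}^{\mathrm{re}}$, which is the claim. There is no real technical obstacle; the only point that needs care is the bookkeeping in the doubled coordinates—keeping straight that enlarging the half‑line on the right corresponds to enlarging the hyperrectangle, while enlarging it on the left corresponds to shrinking the hyperrectangle, and that in both cases the Gaussian error is captured by $\mcl{C}_{Z^\diamond}(\varepsilon)$ rather than $\mcl{C}_Z(\varepsilon)$, which is precisely why Lemma \ref{lemma:cf-joint} is needed to absorb the extra factor of two.
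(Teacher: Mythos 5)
Your proposal is correct and follows essentially the same route as the paper: reduce hyperrectangles to one-sided events for the doubled vector $F^\diamond$, apply \eqref{eq:cck-kol} once with $A=(-\infty,0]$ and once with $A=(0,\infty)$, and absorb the resulting boundary layers $P(0<\max_j(Z^\diamond_j-y_j)\leq\varepsilon)$ and $P(-\varepsilon<\max_j(Z^\diamond_j-y_j)\leq0)$ into $\mcl{C}_{Z^\diamond}(\varepsilon)\leq2\mcl{C}_Z(\varepsilon)$ via Lemma \ref{lemma:cf-joint}. The only cosmetic difference is your explicit shift $y_j\mapsto y_j-\varepsilon$ to fit the second boundary term into the definition of $\mcl{C}_{Z^\diamond}$, which is harmless since the concentration function takes a supremum over $y$.
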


\begin{proof}
Take $y\in\mathbb{R}^{2d}$ arbitrarily. Then we have
\begin{align*}
P(F^\diamond\leq y)
&=P\lpa\max_j(F^\diamond_j-y_j)\leq0\rpa
\leq P\lpa\max_j(Z^\diamond_j-y_j)\leq\varepsilon\rpa+\eta\quad(\because\text{\eqref{eq:cck-kol}})\\
&=P\lpa0<\max_j(Z^\diamond_j-y_j)\leq0\rpa+P\lpa0<\max_j(Z^\diamond_j-y_j)\leq\varepsilon\rpa+\eta\\
&\leq P(Z^\diamond\leq y)+\mcl{C}_{Z^\diamond}(\varepsilon)+\eta.
\end{align*}
Meanwhile, \eqref{eq:cck-kol} yields
\[
P\lpa\max_{1\leq j\leq 2d}(F^\diamond_j-y_j)>0\rpa\leq P\lpa\max_{1\leq j\leq 2d}(Z^\diamond_j-y_j)>-\ve\rpa+\eta,
\]
so we obtain
\[
P\lpa\max_{1\leq j\leq 2d}(Z^\diamond_j-y_j)\leq-\ve\rpa
\leq P\lpa\max_{1\leq j\leq 2d}(F^\diamond_j-y_j)\leq0\rpa+\eta.
\]
Thus we infer that
\begin{align*}
P(Z^\diamond\leq y)
&=P\lpa\max_j(Z^\diamond_j-y_j)\leq-\ve\rpa+P\lpa-\ve<\max_j(Z^\diamond_j-y_j)\leq0\rpa\\
&\leq P\lpa\max_j(F^\diamond_j-y_j)\leq0\rpa+\eta+\mcl{C}_{Z^\diamond}(\varepsilon)
=P(F^\diamond\leq y)+\eta+\mcl{C}_{Z^\diamond}(\varepsilon).
\end{align*}
So we obtain $|P(F^\diamond\leq y)-P(Z^\diamond\leq y)|\leq 2\mcl{C}_{Z}(\varepsilon)+\eta$ by Lemma \ref{lemma:cf-joint}. Since
\[
\sup_{A\in\mathcal{A}^{\mathrm{re}}}|P(F\in A)-P(Z\in A)|
=\sup_{y\in\mathbb{R}^{2d}}|P(F^\diamond\leq y)-P(Z^\diamond\leq y)|,
\]
this completes the proof.
\end{proof}

%\begin{lemma}\label{lemma:theta}
%If $\ul{\sigma}:=\min_{1\leq j\leq d}\|S_{n,j}^X\|_2>0$, then
%\begin{equation*}
%\frac{1}{\sqrt{2\pi}\ul{\sigma}}\leq\Theta_X\leq\frac{\sqrt{2\log d}+2}{\ul{\sigma}}.
%\end{equation*}
%\end{lemma}
%
%\begin{proof}
%The second inequality is the consequence of Nazarov's inequality. To prove the first one, we may assume $\variance[Z_{n,1}^X]=\ul{\sigma}^2$ without loss of generality. 
%By the definition of $\Theta_X$, we have
%\[
%\Theta_X\geq \frac{1}{\varepsilon}P\lpa\left\{Z_{n,1}^X\leq\varepsilon\right\}\cap\left[\left\{Z_{n,1}^X\geq0\right\}\cup\bigcup_{j=2}^d\left\{Z_{n,j}^X\leq M\right\}\right]\rpa
%\]
%for any $\varepsilon>0$ and $M\in\mathbb{R}$. Letting $M\to-\infty$, we obtain
%\[
%\Theta_X\geq \frac{1}{\varepsilon}P\lpa0\leq Z_{n,1}^X\leq\varepsilon\rpa
%=\frac{1}{\sqrt{2\pi}\ul{\sigma}\varepsilon}\int_0^\varepsilon e^{-\frac{x^2}{2\ul{\sigma}^2}}dx.
%\]
%Letting $\varepsilon\to0$, we obtain the desired result.
%\end{proof}

\begin{lemma}\label{lemma:theta}
$\mcl{C}_F(\ve)>0$ for any $d$-dimensional random vector $F$ and $\ve>0$.
\end{lemma}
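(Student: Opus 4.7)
The plan is to reduce the multivariate anti-concentration problem to a one-dimensional statement about the real-valued random variable $M:=\max_{1\leq j\leq d}F_j$, and then apply an elementary pigeonhole argument.

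First, I would restrict the supremum in the definition of $\mcl{C}_F(\varepsilon)$ to diagonal shifts $y=(t,t,\dots,t)^\top$ with $t\in\mathbb{R}$. For such $y$,
\[
P\!\left(0\leq\max_{1\leq j\leq d}(F_j-y_j)\leq\varepsilon\right)=P\!\left(t\leq M\leq t+\varepsilon\right),
\]
so it suffices to exhibit a single $t\in\mathbb{R}$ with $P(t\leq M\leq t+\varepsilon)>0$.

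Since $F$ is $\mathbb{R}^d$-valued, each coordinate $F_j$ is a.s.\ finite, hence $M$ is a.s.\ finite. By continuity of probability, $P(|M|\leq N)\uparrow 1$ as $N\to\infty$, so we may fix $N\in\mathbb{N}$ with $P(|M|\leq N)\geq 1/2>0$. Cover the compact interval $[-N,N]$ by the finite family of closed intervals $I_k:=[k\varepsilon,(k+1)\varepsilon]$ for integers $k$ with $-\lceil N/\varepsilon\rceil\leq k\leq \lceil N/\varepsilon\rceil$; their union contains $[-N,N]$. By countable subadditivity,
\[
\sum_{k}P(M\in I_k)\;\geq\; P(|M|\leq N)\;>\;0,
\]
so there exists at least one $k_0$ with $P(M\in I_{k_0})>0$. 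Setting $t=k_0\varepsilon$ and $y=(t,\dots,t)^\top$ then gives
\[
\mcl{C}_F(\varepsilon)\geq P(t\leq M\leq t+\varepsilon)=P(M\in I_{k_0})>0,
\]
which is the claim. No genuine obstacle arises here; the only point requiring care is the a.s.\ finiteness of $M$, which follows at once from $F$ taking values in $\mathbb{R}^d$ rather than in an extended state space.
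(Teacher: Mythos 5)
Your proposal is correct and follows essentially the same route as the paper: both reduce the problem to the maximum $M=\max_{1\leq j\leq d}F_j$ via diagonal shifts and then pigeonhole over intervals of length $\varepsilon$. The only cosmetic difference is that the paper argues by contradiction using the countable cover $\{[i\varepsilon,(i+1)\varepsilon]\}_{i\in\mathbb{Z}}$ of the whole line, whereas you first truncate $M$ to a compact interval and use a finite cover; both versions are fine.
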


\begin{proof}
To obtain a contradiction, assume $\mcl{C}_F(\ve)=0$. Then we have $P(x\leq F\leq x+\ve)=0$ for all $x\in\mathbb{R}$. Thus, 
$1=P(F\in\mathbb{R})\leq\sum_{i=-\infty}^\infty P(i\ve\leq F\leq (i+1)\ve)=0,$ 
a contradiction.
\end{proof}

\subsection{Proof of Theorem \ref{thm:main}\ref{thm:psi}}

The following is a generalization of \cite[Lemma C.1]{CCK2017}:
\begin{lemma}\label{lemma:c1}
Let $Y$ be a non-negative random variable such that $P(Y>x)\leq Ae^{-x/B}$ for all $y\geq0$ and for some constants $A,B>0$. Then we have $E[Y^p1_{\{Y>t\}}]\leq p!Ae^{-t/B}(t+B)^p$ for every $t\geq0$ and every positive integer $p$.
\end{lemma}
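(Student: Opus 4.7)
The plan is to start from the tail-integration identity
\[
E\bigl[Y^p 1_{\{Y>t\}}\bigr]
= t^p P(Y>t) + \int_t^\infty p y^{p-1} P(Y>y)\,dy,
\]
which follows by writing $Y^p = t^p + \int_t^Y p y^{p-1}\,dy$ on the event $\{Y>t\}$ and then applying Tonelli's theorem. Plugging the hypothesis $P(Y>x)\leq Ae^{-x/B}$ into both terms then reduces the problem to evaluating (or upper bounding) the deterministic integral $\int_t^\infty p y^{p-1} e^{-y/B}\,dy$.

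For this integral, I would either iterate integration by parts $p-1$ times, or invoke the standard closed form for the upper incomplete gamma function at integer order, to obtain the identity
\[
\int_t^\infty p y^{p-1} e^{-y/B}\,dy
= p!\, e^{-t/B} \sum_{k=0}^{p-1}\frac{t^k B^{p-k}}{k!}.
\]
Absorbing the leading $t^p P(Y>t) \leq A e^{-t/B}\, t^p = A e^{-t/B}\, p!\cdot t^p/p!$ into the sum as the $k=p$ term gives
\[
E\bigl[Y^p 1_{\{Y>t\}}\bigr] \leq A\, p!\, e^{-t/B} \sum_{k=0}^{p}\frac{t^k B^{p-k}}{k!}.
\]

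To finish, I would compare the right-hand side to the binomial expansion of $(t+B)^p$: since $\binom{p}{k} = \frac{p!}{k!(p-k)!} \geq \frac{1}{k!}$ for every $0\leq k\leq p$ (because $\frac{p!}{(p-k)!}\geq 1$), we have
\[
\sum_{k=0}^{p}\frac{t^k B^{p-k}}{k!}
\leq \sum_{k=0}^{p}\binom{p}{k} t^k B^{p-k}
= (t+B)^p,
\]
which yields the claimed inequality. There is no real obstacle here; the only thing to watch is the bookkeeping in the incomplete-gamma computation and making sure the $t^p P(Y>t)$ term is correctly folded in as the missing $k=p$ summand before applying the binomial comparison.
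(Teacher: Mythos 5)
Your proof is correct and follows essentially the same route as the paper: the same tail-integration decomposition (the paper writes the boundary term as $p\int_0^t P(Y>t)x^{p-1}\,dx = t^pP(Y>t)$), the same closed-form evaluation of the incomplete gamma integral, folding the boundary term in as the $k=p$ summand, and the same comparison $\sum_{k=0}^p t^kB^{p-k}/k!\leq (t+B)^p$ (which you justify explicitly via $1/k!\leq\binom{p}{k}$, a step the paper leaves implicit). No gaps.
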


\begin{proof}
A simple computation yields
\begin{align*}
E[Y^p1_{\{Y>t\}}]
&=p\int_0^tP(Y>t)x^{p-1}dx+p\int_t^\infty P(Y>x)x^{p-1}dx\\
&\leq At^pe^{-t/B}+pA\int_t^\infty e^{-x/B}x^{p-1}dx.
\end{align*}
By Eq.(8.352.2) in \cite{GR2007} we have
\begin{align*}
pA\int_t^\infty x^{p-1}e^{-x/B}dx
=pAB^{p}\int_{t/B}^\infty y^{p-1}e^{-y}dx
=p!AB^{p}e^{-t/B}\sum_{q=0}^{p-1}\frac{(t/B)^q}{q!}.
\end{align*}
Consequently, we obtain
\begin{align*}
E[Y^p1_{\{Y>t\}}]
&\leq p!AB^{p}e^{-t/B}\sum_{q=0}^{p}\frac{(t/B)^q}{q!}%\\
%&=Ae^{-t/B}\sum_{q=0}^{p}(p-q)!\binom{p}{q}t^qB^{p-q}
\leq p!Ae^{-t/B}(t+B)^p.
\end{align*}
This completes the proof.
\end{proof}

\begin{lemma}\label{lemma:bounded}
If there are constants $B_n,\kappa_n\geq1$ such that $\max_{j}n^{-1}\sum_{i=1}^n\expe{X_{ij}^4}\leq B_n^2$ and $\max_{i,j}|X_{ij}|\leq 2\kappa_n$ a.s., there is a universal constant $C>0$ such that 
\[
\Delta^X_{n,1}\leq C\lpa B_n+\kappa_n^2\sqrt{\frac{\log d}{n}}\rpa.
\]
\end{lemma}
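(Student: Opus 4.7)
The plan is to center the sum $\sum_{i=1}^n X_{ij}^4$ at its mean and to control the centered part by a Bernstein-type maximal inequality that exploits both the boundedness hypothesis and the fourth-moment hypothesis simultaneously. Setting $Y_{ij}:=X_{ij}^4-\expe{X_{ij}^4}$, we start from
\[
\ex{\max_{1\leq j\leq d}\sum_{i=1}^n X_{ij}^4}\leq \max_{1\leq j\leq d}\sum_{i=1}^n \expe{X_{ij}^4}+\ex{\max_{1\leq j\leq d}\labs\sum_{i=1}^n Y_{ij}\rabs}\leq nB_n^2+\ex{\max_{1\leq j\leq d}\labs\sum_{i=1}^n Y_{ij}\rabs},
\]
so the mean part alone accounts for the leading $B_n^2$ inside the square root, and the problem reduces to proving the right deviation bound on the maximum of the centered sums.

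The decisive technical point is that the variance proxy for the $Y_{ij}$'s is $\kappa_n^4 B_n^2$ rather than the trivial $\kappa_n^8$. Indeed, since $|X_{ij}|\leq 2\kappa_n$ a.s., I would write $|Y_{ij}|\leq 32\kappa_n^4$ and, crucially,
\[
\sum_{i=1}^n\variance[Y_{ij}]\leq\sum_{i=1}^n\expe{X_{ij}^8}\leq(2\kappa_n)^4\sum_{i=1}^n\expe{X_{ij}^4}\leq 16n\kappa_n^4 B_n^2
\]
uniformly in $j$, where I peel off only two powers of $X_{ij}$ via the $L^\infty$ bound and use the moment assumption on the remaining $X_{ij}^4$. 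Applying Bernstein's inequality to each $\sum_i Y_{ij}$, taking a union bound over $j\in[d]$, and integrating the resulting tail in the usual way then produces
\[
\ex{\max_{1\leq j\leq d}\labs\sum_{i=1}^n Y_{ij}\rabs}\lesssim \kappa_n^2 B_n\sqrt{n\log d}+\kappa_n^4 \log d.
\]
The cross term can be absorbed by AM--GM, $\kappa_n^2 B_n\sqrt{n\log d}\leq\tfrac12\lpa nB_n^2+\kappa_n^4\log d\rpa$, giving $\ex{\max_j\sum_i X_{ij}^4}\lesssim nB_n^2+\kappa_n^4\log d$.

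Dividing by $n$, taking the square root and using subadditivity $\sqrt{a+b}\leq\sqrt{a}+\sqrt{b}$ finally yields $\Delta_{n,1}^X\lesssim B_n+\kappa_n^2\sqrt{(\log d)/n}$, as claimed. The only genuine obstacle is the variance calculation highlighted above: the naive bound $\variance[Y_{ij}]\leq(32\kappa_n^4)^2$ would degrade $\kappa_n^2$ to $\kappa_n^4$ in the final estimate, which would be useless when Lemma \ref{lemma:bounded} is invoked after truncating a sub-exponential random variable at level $\kappa_n$ of order $\log(dn)$ in the proof of Theorem \ref{thm:main}\ref{thm:psi}.
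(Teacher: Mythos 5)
Your argument is correct, and it reaches exactly the same intermediate estimate as the paper, namely $\ex{\max_{1\leq j\leq d}\sum_{i=1}^nX_{ij}^4}\lesssim nB_n^2+\kappa_n^4\log d$, after which dividing by $n$ and using $\sqrt{a+b}\leq\sqrt a+\sqrt b$ finishes the proof in both cases. The route is genuinely different, though: the paper disposes of the maximum in one line by citing the maximal inequality of Lemma 9 in \cite{CCK2015}, which directly bounds $\ex{\max_j\sum_i X_{ij}^4}$ by $\max_j\sum_i\ex{X_{ij}^4}+(\log d)\ex{\max_{i,j}X_{ij}^4}\lesssim nB_n^2+\kappa_n^4\log d$, whereas you rebuild the estimate from scratch by centering, applying Bernstein's inequality with the refined variance proxy $\sum_i\variance[X_{ij}^4]\leq(2\kappa_n)^4\sum_i\ex{X_{ij}^4}\leq 16n\kappa_n^4B_n^2$, taking a union bound over $j$, and absorbing the cross term $\kappa_n^2B_n\sqrt{n\log d}$ by AM--GM. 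Your variance computation (peeling off only two powers of $X_{ij}$ via the $L^\infty$ bound) is precisely the mechanism that makes such maximal inequalities work, so your proof is a self-contained substitute for the cited lemma; the paper's version is shorter, yours makes explicit where the boundedness and the fourth-moment hypothesis each enter. One small quibble with your closing remark: with the naive variance bound $(32\kappa_n^4)^2$ the damage is not literally replacing $\kappa_n^2$ by $\kappa_n^4$ but rather producing the term $\kappa_n^2(\log d/n)^{1/4}$ in place of $\kappa_n^2\sqrt{(\log d)/n}$; this is indeed too weak for the application in the proof of Theorem \ref{thm:main}\ref{thm:psi}, so your point stands, but the stated form of the degradation is slightly off. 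This does not affect the validity of the proof.
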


\begin{proof}
By \cite[Lemma 9]{CCK2015} and assumptions we have
\begin{align*}
\ex{\max_{1\leq j\leq d}\sum_{i=1}^nX_{ij}^4}
&\lesssim\max_{1\leq j\leq d}\sum_{i=1}^n\ex{X_{ij}^4}
+(\log d)\ex{\max_{1\leq i\leq n}\max_{1\leq j\leq d}X_{ij}^4}\\
&\lesssim nB_n^2+\kappa_n^4(\log d),
\end{align*}
which yields the desired result. 
\end{proof}

\begin{lemma}\label{lemma:trunc}
Suppose that $\max_{i,j}\|X_{ij}\|_{\psi_1}\leq B_n$ for some $B_n\geq1$. 
Set $\kappa_n:=2B_n\log n$. For $i=1,\dots,n$ and $j=1,\dots,d$, define 
\[
\wh{X}_{ij}:=X_{ij}1_{\{|X_{ij}|>\kappa_n\}}-\ex{X_{ij}1_{\{|X_{ij}|>\kappa_n\}}}
\] 
and $\wh{X}:=(\wh{X}_i)_{i=1}^n$ with $\wh{X}_i=(\wh{X}_{i1},\dots,\wh{X}_{id})^\top$. Then there is a universal constant $C>0$ such that
\[
\|\|S_n^{\wh{X}}\|_{\ell_\infty}\|_{\psi_1}\leq C\sqrt{\delta_{n,2}}.
\]
\end{lemma}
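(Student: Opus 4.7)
The plan is to combine three ingredients: (i) the tail of $X_{ij}$ from the $\psi_1$-assumption to control moments of the truncated variables $\wh{X}_{ij}$, (ii) a Bernstein-type deviation bound for each coordinate $S_{n,j}^{\wh{X}}$, and (iii) a union bound plus tail-integration to obtain the $\psi_1$-norm of the maximum over $j$.

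First, since $\|X_{ij}\|_{\psi_1}\le B_n$, we have $P(|X_{ij}|>t)\le 2e^{-t/B_n}$ for all $t\ge 0$. With $\kappa_n=2B_n\log n$ this gives $e^{-\kappa_n/B_n}=n^{-2}$. Applying Lemma \ref{lemma:c1} to $Y=|X_{ij}|$ with this $\kappa_n$, we get, for every integer $p\ge 2$,
\[
\expe{|X_{ij}|^p\mathbf 1_{\{|X_{ij}|>\kappa_n\}}}\lesssim p!\,n^{-2}(\kappa_n+B_n)^p\lesssim p!\,n^{-2}(B_n\log n)^p.
\]
After centering, $\wh X_{ij}$ still satisfies $\expe{\wh X_{ij}^2}\lesssim B_n^2(\log n)^2/n^2$, while $|\wh X_{ij}|\le |X_{ij}|+\expe{|X_{ij}|\mathbf 1_{\{|X_{ij}|>\kappa_n\}}}$ gives $\|\wh X_{ij}\|_{\psi_1}\lesssim B_n$.

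Next, for each fixed $j\in[d]$, $S_{n,j}^{\wh X}=n^{-1/2}\sum_{i=1}^n\wh X_{ij}$ is a sum of independent centered sub-exponential variables. The standard $\psi_1$-Bernstein inequality (e.g. Proposition 5.16 in Vershynin or Lemma 2.2.11 in van der Vaart and Wellner) yields, with $V_j:=\sum_{i=1}^n\expe{\wh X_{ij}^2}\lesssim B_n^2(\log n)^2/n$ and $K:=\max_i\|\wh X_{ij}\|_{\psi_1}\lesssim B_n$,
\[
P\!\left(|S_{n,j}^{\wh X}|>t\right)
\le 2\exp\!\left(-c\min\!\left(\frac{n\,t^2}{V_j},\frac{t\sqrt n}{K}\right)\right)
\le 2\exp\!\left(-c\min\!\left(\frac{n^2t^2}{B_n^2(\log n)^2},\frac{t\sqrt n}{B_n}\right)\right).
\]
A union bound over $j\in[d]$ gives the same estimate with $2$ replaced by $2d$ on the right.

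Finally, the bound $P(\|S_n^{\wh X}\|_{\ell_\infty}>t)\le 2d\exp(-c\min(\cdot,\cdot))$ is standard to convert into a $\psi_1$-norm. Solving the tail equation $2d\exp(-c\min(\cdots))\le e^{-s}$ for $s\ge 0$ yields
\[
t\lesssim \frac{B_n\log n\,\sqrt{\log d+s}}{n}+\frac{B_n(\log d+s)}{\sqrt n},
\]
so using the identity $\|Y\|_{\psi_1}\asymp\inf\{C>0:P(|Y|>Cs)\le 2e^{-s}\text{ for all }s\ge 1\}$ (equivalently, integrating the tail) we conclude
\[
\bigl\|\|S_n^{\wh X}\|_{\ell_\infty}\bigr\|_{\psi_1}
\lesssim \frac{B_n\log n\sqrt{\log d}}{n}+\frac{B_n\log d}{\sqrt n}
\lesssim \frac{B_n\log d\,\log n}{\sqrt n}=\sqrt{\delta_{n,2}},
\]
where the final inequality uses $\log n\ge 1$ and $\sqrt{\log d}\le \log d$, together with $n\ge\sqrt n$. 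The main technical point is keeping the variance estimate (which saves a factor $\log n/\sqrt n$ relative to the crude $\psi_1$-bound $B_n$) in the sub-Gaussian part of the Bernstein tail; without exploiting the smallness of $\expe{\wh X_{ij}^2}$ one would lose a factor of $\sqrt{n}$ and miss the target rate.
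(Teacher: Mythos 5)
Your overall route is the same as the paper's: control the truncated moments via Lemma \ref{lemma:c1}, apply a Bernstein bound coordinatewise, and then pass to the maximum over $j$ (the paper does the last two steps with Lemmas 2.2.11 and 2.2.10 of van der Vaart and Wellner; your union bound plus tail integration is the same device). The one genuine problem is your displayed ``standard $\psi_1$-Bernstein inequality.'' A bound of the form $P(|S_{n,j}^{\wh X}|>t)\le 2\exp\lpa-c\min\lpa nt^2/V_j,\ t\sqrt n/K\rpa\rpa$ with $V_j$ the \emph{true} variance and $K=\max_i\|\wh X_{ij}\|_{\psi_1}\lesssim B_n$ is not what either cited result gives, and it is false in general: Vershynin's sub-exponential Bernstein puts $\sum_i\|\wh X_{ij}\|_{\psi_1}^2\asymp nB_n^2$ (not $V_j$) in the quadratic regime, while the variance-based Bernstein inequality (VW Lemma 2.2.11) requires the moment condition $\expe{|\wh X_{ij}|^p}\le\frac{p!}{2}M^{p-2}v_i$ and then the \emph{linear} regime carries the scale $M$ --- and your own moment computation yields $M\asymp\kappa_n+B_n\asymp B_n\log n$, not $B_n$. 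That the mixed form you wrote cannot hold with the $\psi_1$-norm as the linear scale can be seen from i.i.d.\ variables taking the values $\pm\log n$ with probability $1/(2n)$ each and $0$ otherwise: they have $\psi_1$-norm of order one and total variance $(\log n)^2$, yet $P(\sum_iY_i\ge k\log n)\gtrsim e^{-Ck\log k}$, which exceeds $2e^{-ck^2}$ for moderately large $k$.

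Fortunately the misstep is harmless for this lemma. Using the correct scale, the tail is $2\exp\lpa-c\min\lpa n^2t^2/(B_n^2(\log n)^2),\ t\sqrt n/(B_n\log n)\rpa\rpa$, and repeating your final step gives
\[
\bigl\|\|S_n^{\wh X}\|_{\ell_\infty}\bigr\|_{\psi_1}
\lesssim \frac{B_n(\log n)\sqrt{\log d}}{n}+\frac{B_n(\log d)(\log n)}{\sqrt n}
\lesssim\sqrt{\delta_{n,2}},
\]
which is exactly the paper's conclusion (there the maximal inequality VW Lemma 2.2.10 is invoked with scales $2(\kappa_n+B_n)/\sqrt n$ and $\sqrt{8}(\kappa_n+B_n)/n$). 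So: replace the unjustified inequality by an application of VW Lemma 2.2.11 with the Bernstein moment condition you have already derived, and your argument becomes a correct proof essentially identical to the paper's. Your closing remark about where the gain comes from is right --- the point is the factor $e^{-\kappa_n/B_n}=n^{-2}$ in the truncated second moment --- but the gain lives in the quadratic part only; the linear part necessarily keeps the factor $\log n$, which the target bound $\sqrt{\delta_{n,2}}$ absorbs anyway.
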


\begin{proof}
For every $p=2,3,\dots$, we have by Lemma \ref{lemma:c1}
\begin{align*}
\ex{|\wh{X}_{ij}|^p}
&\leq2^{p-1}\ex{|X_{ij}|^p1_{\{|X_{ij}|>\kappa_n\}}}
\leq 2^pp!e^{-\kappa_n/B_n}(\kappa_n+B_n)^p\\
&=\frac{p!}{2}(2\kappa_n+2B_n)^{p-2}\cdot\frac{8(\kappa_n+B_n)^2}{n^2},
\end{align*}
so the Bernstein inequality \cite[Lemma 2.2.11]{VW1996} yields
\[
P\lpa\labs S_{n,j}^{\wh{X}}\rabs>x\rpa\leq2\exp\lpa-\frac{1}{2}\frac{x^2}{8(\kappa_n+B_n)^2/n^2+(2\kappa_n+2B_n)x/\sqrt{n}\}}\rpa
\]
for all $j\in\{1,\dots,d\}$ and $x\geq0$. Therefore, by Lemma 2.2.10 in \cite{VW1996} we obtain
\begin{align*}
\|\|S_n^{\wh{X}}\|_{\ell_\infty}\|_{\psi_1}
&\lesssim \sqrt{\frac{(2\kappa_n+2B_n)^2(\log d)^2}{n}}+ \sqrt{\frac{8(\kappa_n+B_n)^2\log d}{n^2}}
\lesssim \sqrt{\delta_{n,2}}.
\end{align*}
This completes the proof.
\end{proof}

Under the present assumptions, we can reduce Lemma \ref{coupling} to the following form:
\begin{lemma}\label{lemma:psi}
%Assume $B_n^2(\log n)^2/n\leq1$. 
Under the assumptions of Theorem \ref{thm:main}\ref{thm:psi}, there is a universal constant $C_0>0$ such that
%\begin{multline*}
\[
P\lpa\max_{1\leq j\leq d}(S_{n,j}^{X}-y_j)\in A\rpa
\leq P\lpa\max_{1\leq j\leq d}(Z_{j}^X-y_j)\in A^{6\varepsilon}\rpa
+C_0\varepsilon^{-2}\lpa\sqrt{\delta_{n,1}}+\delta_{n,2}\rpa,
\]
%\end{multline*}
for any $y\in\mathbb{R}^d$, $A\in\mcl{B}(\mathbb{R})$ and $\varepsilon\geq 12B_n(\log n)(\log d)/\sqrt{n}$. 
\end{lemma}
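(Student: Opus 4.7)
The plan is to reduce the sub-exponential case to the bounded case of Lemma \ref{coupling} via truncation at the level $\kappa_n = 2B_n\log n$. Decompose $X_{ij} = \tilde X_{ij} + \hat X_{ij}$, where
\[
\tilde X_{ij} := X_{ij}\mathbf{1}_{\{|X_{ij}|\leq \kappa_n\}} - E[X_{ij}\mathbf{1}_{\{|X_{ij}|\leq \kappa_n\}}],\qquad \hat X_{ij}:= X_{ij} - \tilde X_{ij}
\]
is exactly the centered tail studied in Lemma \ref{lemma:trunc}. Note that $|\tilde X_{ij}|\leq 2\kappa_n$ almost surely. The key conceptual step is to apply Lemma \ref{coupling} to the bounded sequence $\tilde X$ but with the reference covariance chosen to be $\mf{C}_n^X$ itself, so that the comparison Gaussian is $Z_n^X$; this spares me a separate Gaussian-to-Gaussian perturbation argument.

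Next, I would verify that each of the three error terms produced by Lemma \ref{coupling} applied to $\tilde X$ stays within the budget $\varepsilon^{-2}(\sqrt{\delta_{n,1}}+\delta_{n,2})$. The term $\Delta^{\tilde X}_{n,2}(\varepsilon)$ vanishes, because the hypothesis $\varepsilon\geq 12B_n(\log n)(\log d)/\sqrt{n}$ is precisely equivalent to $2\kappa_n\leq \sqrt{n}\varepsilon/(3\log d)$. For $\Delta^{\tilde X}_{n,0}$, expand $E[X_{ij}X_{ik}]-E[\tilde X_{ij}\tilde X_{ik}]$ into three cross terms involving $\hat X$; Cauchy--Schwarz together with the bound $\|\hat X_{ij}\|_2\lesssim B_n(\log n)/n$, obtained from Lemma \ref{lemma:c1} with $p=2$, gives $\Delta^{\tilde X}_{n,0}\log d\lesssim\delta_{n,2}$. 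For $\Delta^{\tilde X}_{n,1}$, since $E[\tilde X_{ij}^4]\lesssim E[X_{ij}^4]$ and $|\tilde X_{ij}|\leq 2\kappa_n$, Lemma \ref{lemma:bounded} yields $\Delta^{\tilde X}_{n,1}\lesssim B_n + \kappa_n^2\sqrt{(\log d)/n}$, and multiplying by $\sqrt{(\log d)^3/n}$ gives exactly $\sqrt{\delta_{n,1}} + O(\delta_{n,2})$ because $\kappa_n^2(\log d)^2/n = 4\delta_{n,2}$. Combining these yields
\[
P\bigl(\max_{j}(S_{n,j}^{\tilde X}-y_j)\in A\bigr)\leq P\bigl(\max_{j}(Z_{n,j}^X - y_j)\in A^{5\varepsilon}\bigr) + C\varepsilon^{-2}(\sqrt{\delta_{n,1}}+\delta_{n,2}).
\]

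Finally, transfer from $S_n^{\tilde X}$ back to $S_n^X = S_n^{\tilde X}+S_n^{\hat X}$ via the elementary inequality $|\max_j(S_{n,j}^X-y_j) - \max_j(S_{n,j}^{\tilde X}-y_j)|\leq\|S_n^{\hat X}\|_{\ell_\infty}$. Enlarging $A$ by one more $\varepsilon$ costs $P(\|S_n^{\hat X}\|_{\ell_\infty}>\varepsilon)$, which by Lemma \ref{lemma:trunc} and the standard bound $E[Y^2]\lesssim\|Y\|_{\psi_1}^2$ (applied through Markov's inequality) is $\lesssim\varepsilon^{-2}\delta_{n,2}$. Together these estimates give the claimed inequality with buffer $6\varepsilon$.

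The main technical input is already encapsulated in Lemma \ref{coupling} and Lemma \ref{lemma:trunc}, so the proof is really a bookkeeping exercise. The step most likely to require care is the simultaneous verification that the $\Delta^{\tilde X}_{n,0}$ and $\Delta^{\tilde X}_{n,1}$ contributions both land inside the envelope $\sqrt{\delta_{n,1}}+\delta_{n,2}$: in particular the match between $\kappa_n^2(\log d)^2/n$ and $\delta_{n,2}$ is essentially what pins down the choice $\kappa_n\asymp B_n\log n$ and dictates the lower bound on $\varepsilon$.
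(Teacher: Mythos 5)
Your proposal is correct and follows essentially the same route as the paper: truncation at $\kappa_n=2B_n\log n$, application of Lemma \ref{coupling} to the bounded part $\wt{X}$ with $\mf{C}=\mf{C}_n^X$, the bounds on $\Delta^{\wt{X}}_{n,0}$, $\Delta^{\wt{X}}_{n,1}$ via Lemmas \ref{lemma:c1} and \ref{lemma:bounded}, the vanishing of $\Delta^{\wt{X}}_{n,2}(\varepsilon)$ under the stated lower bound on $\varepsilon$, and the tail transfer via Lemma \ref{lemma:trunc} with Markov's inequality, giving the $6\varepsilon$ enlargement. The only cosmetic difference is your three-term cross-term expansion for $\Delta^{\wt{X}}_{n,0}$ in place of the paper's two-term one, which changes nothing.
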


\begin{proof}
%Without loss of generality, we may assume
%\begin{equation}\label{eq:trivial}
%\sqrt{\frac{B_n^2(\log d)^{3}}{n}}+\frac{B_n^2(\log d)^2(\log n)^2}{n}\leq1,
%\end{equation}
%since otherwise the claim evidently holds true with $C_0=1$ (note that $\varepsilon^{-2}\geq1$ by assumption). 
Set $\kappa_n:=2B_n\log n$. For $i=1,\dots,n$ and $j=1,\dots,d$, define 
\[
\wt{X}_{ij}:=X_{ij}1_{\{|X_{ij}|\leq\kappa_n\}}-\ex{X_{ij}1_{\{|X_{ij}|\leq\kappa_n\}}}
\] 
and set $\wt{X}:=(\wt{X}_i)_{i=1}^n$ with $\wt{X}_i=(\wt{X}_{i1},\dots,\wt{X}_{id})^\top$. 
Note that $\max_{i,j}|\wt{X}_{ij}|\leq2\kappa_n$. 
Also, we evidently have
\begin{equation}\label{est:trunc0}
P\lpa\max_{1\leq j\leq d}(S_{n,j}^X-y_j)\in A\rpa
\leq P\lpa\max_{1\leq j\leq d}(S_{n,j}^{\wt{X}}-y_j)\in A^\varepsilon\rpa
+P\lpa\|S_{n}^{X-\wt{X}}\|_{\ell_\infty}\geq\varepsilon\rpa.
\end{equation}
Noting $\expe{X_{ij}}=0$, we have $X_{ij}-\wt{X}_{ij}=X_{ij}1_{\{|X_{ij}|>\kappa_n\}}-\expe{X_{ij}1_{\{|X_{ij}|>\kappa_n\}}}$. Hence, Lemma \ref{lemma:trunc} and the Markov inequality yield
\begin{equation}\label{est:trunc}
P\lpa\|S_{n}^{X-\wt{X}}\|_{\ell_\infty}\geq\varepsilon\rpa
\leq\varepsilon^{-2}\ex{\|S_{n}^{X-\wt{X}}\|_{\ell_\infty}^2}
\lesssim\varepsilon^{-2}\delta_{n,2}.
\end{equation}

Next, applying Lemma \ref{coupling} to $\wt{X}$ with $\mf{C}=\mf{C}_n^X$, we obtain
\begin{multline*}
P\lpa\max_{1\leq j\leq d}(S_{n,j}^{\wt{X}}-y_j)\in A^\varepsilon\rpa
\leq P\lpa\max_{1\leq j\leq d}(Z_{n,j}^X-y_j)\in A^{6\varepsilon}\rpa\\
+C\left\{\varepsilon^{-2}\lpa\Delta^{\wt{X}}_{n,0}\log d+\Delta^{\wt{X}}_{n,1}\sqrt{\frac{(\log d)^3}{n}}\rpa
+\varepsilon^{-4}\Delta^{\wt{X}}_{n,2}(\varepsilon)\frac{(\log d)^3}{n}\right\},
\end{multline*}
where $C>0$ is a universal constant. 
The Schwarz inequality and Lemma \ref{lemma:c1} yield
\begin{align}
\Delta^{\wt{X}}_{n,0}&=\max_{1\leq j,k\leq d}\labs\frac{1}{n}\sum_{i=1}^n\lpa\expe{\wt{X}_{ij}\wt{X}_{ik}}-\expe{X_{ij}X_{ik}}\rpa\rabs\nonumber\\
&\leq\max_{1\leq j,k\leq d}\max_{1\leq i\leq n}\lpa\|X_{ij}-\wt{X}_{ij}\|_2\|\wt{X}_{ik}\|_2+\|X_{ij}\|_2\|X_{ik}-\wt{X}_{ik}\|_2\rpa\nonumber\\
&\lesssim e^{-\kappa_n/(2B_n)}B_n^2\log n%=B_n^2(\log n)/n
\lesssim B_n^2(\log d)(\log n)^2/n.\label{est:Delta0}
\end{align}
Meanwhile, applying Lemma \ref{lemma:bounded} to $\wt{X}$ (note that $\expe{\wt{X}_{ij}^4}\lesssim\expe{X_{ij}^4}$), we obtain
\begin{align*}
\Delta^{\wt{X}}_{n,1}\sqrt{\frac{(\log d)^3}{n}}
\lesssim B_n\sqrt{\frac{(\log d)^3}{n}}+\kappa_n^2\frac{(\log d)^2}{n}
\lesssim \sqrt{\delta_{n,1}}+\delta_{n,2}.
\end{align*}
Moreover, since $\sqrt{n}\varepsilon/(3\log d)\geq2\kappa_n$ by assumption, we have $\Delta^{\wt{X}}_{n,2}(\varepsilon)=0$. 
Consequently, we obtain
\[
P\lpa\max_{1\leq j\leq d}(S_{n,j}^{\wt{X}}-y_j)\in A^\varepsilon\rpa
\leq P\lpa\max_{1\leq j\leq d}(Z_{j}^X-y_j)\in A^{6\varepsilon}\rpa
+C'\varepsilon^{-2}\lpa\sqrt{\delta_{n,1}}+\delta_{n,2}\rpa,
\]
where $C'>0$ is a universal constant. Combining this with \eqref{est:trunc0}--\eqref{est:trunc}, we complete the proof. 
\end{proof}

% \subsubsection{main:kol-psi}
\begin{proof}[Proof of Theorem \ref{thm:main}\ref{thm:psi}]
%\eqref{eq:kol-psi-2} can be shown analogously to the proof of \eqref{eq:proh-psi-2} using \eqref{eq:kol-psi-1} instead of \eqref{eq:proh-psi-1}. So it suffices to prove \eqref{eq:kol-psi-1}. 
Without loss of generality, we may assume
%\begin{equation}\label{eq:wlog0}
%6(4\sqrt{2\pi})^{1/3}(\ul{\sigma}^{-1/3}+\ul{\sigma}^{2/3})\lpa\delta_{n,1}^{1/6}+\delta_{n,2}^{1/3}\rpa\leq1,
%\end{equation}
%since otherwise the claim holds true with $C=6(4\sqrt{2\pi})^{1/3}$. 
%Note that \eqref{eq:wlog0} implies 
%\begin{equation}\label{eq:wlog}
%6(4\sqrt{2\pi})^{1/3}\lpa\delta_{n,1}^{1/6}+\delta_{n,2}^{1/3}\rpa\leq1.
%\end{equation}
%\begin{equation}\label{eq:wlog0}
%(\ul{\sigma}^{-1/3}+\ul{\sigma}^{2/3})\Theta_X^{2/3}\lpa\delta_{n,1}^{1/6}+\delta_{n,2}^{1/3}\rpa\leq1,
%\end{equation}
%since otherwise the claim holds true with $C=1$. 
%Note that \eqref{eq:wlog0} implies 
%\begin{equation}\label{eq:wlog}
%\Theta_X^{2/3}\lpa\delta_{n,1}^{1/6}+\delta_{n,2}^{1/3}\rpa\leq1.
%\end{equation}
\begin{equation}\label{eq:wlog}
\Theta_X^{2/3}\lpa\delta_{n,1}^{1/6}+\delta_{n,2}^{1/3}\rpa\leq1.
\end{equation}
since otherwise the claim holds true with $C=1$. 
Noting $\Theta_X>0$ by Lemma \ref{lemma:theta}, we set 
\[
%\varepsilon:=6\lpa\frac{4}{\ul{\sigma}}\rpa^{1/3}\Theta_X^{-1/3}\lpa\delta_{n,1}^{1/6}+\delta_{n,2}^{1/3}\rpa.
\varepsilon:=12\Theta_X^{-1/3}\lpa\delta_{n,1}^{1/6}+\delta_{n,2}^{1/3}\rpa.
\]
Then it holds that $\varepsilon\geq12B_n(\log n)(\log d)/\sqrt{n}.$ 
In fact, %noting $\sqrt{2\log d}+2\leq4\sqrt{\log d}$, Lemma \ref{lemma:theta} yields
\begin{align*}
\frac{\sqrt{n}\varepsilon}{12B_n(\log n)(\log d)}
&\geq\Theta_X^{-1/3}\frac{\sqrt{n}}{B_n(\log n)(\log d)}\cdot\frac{B_n^{2/3}(\log d)^{2/3}(\log n)^{2/3}}{n^{1/3}}\\
&=\Theta_X^{-1/3}\frac{n^{1/6}}{B_n^{1/3}(\log n)^{1/3}(\log d)^{1/3}}
=\lpa\Theta_X^{2/3}\delta_{n,2}^{1/3}\rpa^{-1/2},
\end{align*}
so \eqref{eq:wlog} implies the desired inequality. 
Now, since the assumptions of Theorem \ref{thm:main}\ref{thm:psi} are satisfied with replacing $X$ by $X^\diamond$, we can apply Lemma \ref{lemma:psi} to $X^\diamond$ instead of $X$. Thus, there is a universal constant $C_0>0$ such that
\begin{align*}
P\lpa\max_{1\leq j\leq 2d}(S_{n,j}^{X^\diamond}-y_j)\in A\rpa
\leq P\lpa\max_{1\leq j\leq 2d}(Z_{j}^{X^\diamond}-y_j)\in A^{6\varepsilon}\rpa
+C_0\varepsilon^{-2}\lpa\sqrt{\delta_{n,1}}+\delta_{n,2}\rpa
\end{align*}
for any $y\in\mathbb{R}^{2d}$ and $A\in\mcl{B}(\mathbb{R})$. 
Noting $S_n^{X^\diamond}=(S_n^X)^\diamond$, by Lemma \ref{cck-kolmogorov} we obtain
\begin{align*}
\rho_n(\mcl{A}^{\mathrm{re}})
&\leq2\mcl{C}_{Z_n^X}(6\varepsilon)+C_0\varepsilon^{-2}\lpa\sqrt{\delta_{n,1}}+\delta_{n,2}\rpa\\
&\leq12\Theta_X\varepsilon+C_0\varepsilon^{-2}\lpa\sqrt{\delta_{n,1}}+\delta_{n,2}\rpa
\lesssim\Theta_X^{2/3}\lpa\delta_{n,1}^{1/6}+\delta_{n,2}^{1/3}\rpa.
\end{align*}
Thus we complete the proof.
%Next, for every $i=1,\dots,n$, let $Y_i=(X_{i1},\dots,X_{id},-X_{i1},\dots,-X_{id})^\top$ and set $Y:=(Y_i)_{i=1}^n$. Since the assumptions of Theorem \ref{thm:main}\ref{thm:psi} are satisfied with replacing $X$ by $Y$, we can apply Lemma \ref{lemma:psi} to $Y$ instead of $X$. Thus, there is a universal constant $C_0>0$ such that
%\begin{align*}
%P\lpa\max_{1\leq j\leq d}(S_{n,j}^{Y}-y_j)\in A\rpa
%\leq P\lpa\max_{1\leq j\leq d}(Z_{j}^Y-y_j)\in A^{6\varepsilon}\rpa
%+C_0\varepsilon^{-2}\lpa\sqrt{\delta_{n,1}}+\delta_{n,2}\rpa
%\end{align*}
%for any $y\in\mathbb{R}^d$ and $A\in\mcl{B}(\mathbb{R})$. Combining this with Lemma \ref{cck-kolmogorov}, we obtain
%\[
%\sup_{x\in\mathbb{R}^{2d}}|P(S_n^Y\leq x)-P(Z_n^Y\leq x)|
%\leq\mcl{C}_{Z_n^Y}(6\varepsilon)+C_0\varepsilon^{-2}\lpa\sqrt{\delta_{n,1}}+\delta_{n,2}\rpa.
%\]
%Now, Lemma \ref{lemma:cf-joint} yields
%\[
%\mcl{C}_{Z_n^Y}(6\varepsilon)\leq\mcl{C}_{Z_n^X}(6\varepsilon)+\mcl{C}_{-Z_n^X}(6\varepsilon)=2\mcl{C}_{Z_n^X}(6\varepsilon),
%\]
%where the last equality follows from the symmetry of the law of $Z_n^X$. Thus we obtain
%\begin{align*}
%\sup_{x\in\mathbb{R}^{2d}}|P(S_n^Y\leq x)-P(Z_n^Y\leq x)|
%&\leq12\Theta_X\varepsilon+C_0\varepsilon^{-2}\lpa\sqrt{\delta_{n,1}}+\delta_{n,2}\rpa\\
%&\lesssim(\ul{\sigma}^{-1/3}+\ul{\sigma}^{2/3})\Theta_X^{2/3}\lpa\delta_{n,1}^{1/6}+\delta_{n,2}^{1/3}\rpa.
%\end{align*}
%Now, noting that
%\[
%\sup_{x\in\mathbb{R}^{2d}}|P(S_n^Y\leq x)-P(Z_n^Y\leq x)|
%=\sup_{A\in\mathcal{A}^{\mathrm{re}}}\labs P\lpa S_n^X\in A\rpa-P\lpa Z_n^X\in A\rpa\rabs,
%\]
%we complete the proof.
\end{proof}

%\begin{proof}[Proof of Proposition \ref{prop:kolmogorov}]
%Set $U_n:=\frac{1}{\sqrt{n}}\sum_{i=1}^n\sigma u_i$. Without loss of generality, we may assume $U_n$ and $Z_n^X$ are independent. 
%Using \eqref{eq:prohorov} and the independence of $S_n^X$ and $U_n$, for any Borel set $A$ of $\mathbb{R}$ we obtain
%\begin{align*}
%P\lpa\max_{1\leq j\leq d}S_{n,j}^Y\in A\rpa
%&=\ex{P\lpa\max_{1\leq j\leq d}(S_{n,j}^X+U_n)\in A\mid U_n\rpa}\\
%&\leq \ex{P\lpa\max_{1\leq j\leq d}(Z^X_{n,j}+U_n)\in A^{5\varepsilon}\mid U_n\rpa}
%+c_1\varepsilon\\
%&= P\lpa\max_{1\leq j\leq d}(Z^X_{n,j}+U_n)\in A^{5\varepsilon}\rpa
%+c_1\varepsilon,
%\end{align*}
%where $\varepsilon$ is defined by \eqref{def:eps}. Since the law of $(Z^X_{n,1}+U_n,\dots,Z^X_{n,d}+U_n)^\top$ is equal to that of $Z^Y_n$, we obtain
%\begin{align*}
%&P\lpa\max_{1\leq j\leq d}S_{n,j}^Y\in A\rpa
%\leq P\lpa\max_{1\leq j\leq d}Z_{n,j}^Y\in A^{5\varepsilon}\rpa
%+c_1\varepsilon.
%\end{align*}
%Therefore, Lemma \ref{cck-kolmogorov} implies that
%\begin{align*}
%\sup_{t\in\mathbb{R}}\labs P\lpa\max_{1\leq j\leq d}S^Y_{n,j}\leq t\rpa-P\lpa\max_{1\leq j\leq d}Z^Y_{n,j}\leq t\rpa\rabs
%\leq \mcl{C}\lpa\max_{1\leq j\leq d}Z_{n,j}^Y, 5\varepsilon\rpa
%+c_1\varepsilon.
%\end{align*}
%Now, noting $\max_{1\leq j\leq d}S^Y_{n,j}=\max_{1\leq j\leq d}S^X_{n,j}+U_n$, Lemma 1 in \cite[Chapter 15, Section 2]{LeCam1986} yields
%\[
%\mcl{C}\lpa\max_{1\leq j\leq d}Z_{n,j}^Y, 5\varepsilon\rpa
%\leq\mcl{C}\lpa U_n, 5\varepsilon\rpa
%\leq\frac{5\varepsilon}{\sqrt{2\pi}\sigma}.
%\]
%Taking $C_2:=5/\sqrt{2\pi}\vee c_1$, we obtain the desired result. 
%\end{proof}

\subsection{Proof of Theorem \ref{thm:main}\ref{thm:mom}}

In the current situation, Lemma \ref{coupling} can be reduced to the following form:
\begin{lemma}\label{lemma:mom}
%Assume $B_n^2(\log n)^2/n\leq1$. 
Under the assumptions of Theorem \ref{thm:main}\ref{thm:mom}, there is a universal constant $C_0>0$ such that
%\begin{multline*}
\[
P\lpa\max_{1\leq j\leq d}(S_{n,j}^{X}-y_j)\in A\rpa
\leq P\lpa\max_{1\leq j\leq d}(Z_{j}^X-y_j)\in A^{6\varepsilon}\rpa
+C_0\varepsilon^{-2}\lpa\sqrt{\delta_{n,1}}+\delta_{n,2}(q)\rpa,
\]
%\end{multline*}
for any $y\in\mathbb{R}^d$, $A\in\mcl{B}(\mathbb{R})$ and $\varepsilon\geq 6D_n(\log d)^{1-1/q}/n^{1/2-1/q}$. 
\end{lemma}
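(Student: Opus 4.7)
The argument follows the blueprint of Lemma \ref{lemma:psi}: truncate the $X_i$'s, apply Lemma \ref{coupling} to the truncated sequence, and control the truncation discrepancy separately. The substantive changes are that (i) truncation is now governed by the $q$-th moment of $\|X_i\|_{\ell_\infty}$ rather than a $\psi_1$-norm, and (ii) the tail estimates use Markov/Chebyshev in place of Bernstein.

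First, I would set $u_n:=D_n(n/\log d)^{1/q}$ (up to a universal constant) and define
\[
\wt{X}_{ij}:=X_{ij}1_{\{\|X_i\|_{\ell_\infty}\leq u_n\}}-\ex{X_{ij}1_{\{\|X_i\|_{\ell_\infty}\leq u_n\}}},\qquad i\in[n],\ j\in[d].
\]
This choice is tied to the lower bound on $\varepsilon$ in the hypothesis via $u_n\asymp\sqrt{n}\varepsilon/(\log d)$ at the boundary, so that $\|\wt{X}_i\|_{\ell_\infty}\leq 2u_n\leq\sqrt{n}\varepsilon/(3\log d)$ a.s.\ and hence $\Delta^{\wt{X}}_{n,2}(\varepsilon)=0$ when Lemma \ref{coupling} is invoked.

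Next, the truncation tail is handled by Markov plus Nemirovski's inequality (\cite[Lemma 14.24]{BvdG2011}):
\[
P\lpa\|S_n^{X-\wt{X}}\|_{\ell_\infty}\geq\varepsilon\rpa\lesssim\varepsilon^{-2}(\log d)\max_i\ex{\|X_i\|_{\ell_\infty}^2 1_{\{\|X_i\|_{\ell_\infty}>u_n\}}}\lesssim\varepsilon^{-2}(\log d)D_n^qu_n^{2-q},
\]
using the Chebyshev-type bound $\ex{\|X_i\|_{\ell_\infty}^2 1_{\{\|X_i\|_{\ell_\infty}>u_n\}}}\leq u_n^{2-q}\ex{\|X_i\|_{\ell_\infty}^q}\leq D_n^qu_n^{2-q}$. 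With the chosen $u_n$ this reduces to $\lesssim\varepsilon^{-2}\delta_{n,2}(q)$. Then I apply Lemma \ref{coupling} to $\wt{X}$ with $\mf{C}=\mf{C}_n^X$: the covariance discrepancy satisfies $\Delta^{\wt{X}}_{n,0}\lesssim D_n^qu_n^{2-q}$ by the same Chebyshev estimate (contributing $\lesssim\varepsilon^{-2}\delta_{n,2}(q)$ to the coupling bound), while Lemma \ref{lemma:bounded} applied to $\wt{X}$ yields $\Delta^{\wt{X}}_{n,1}\lesssim B_n+u_n^2\sqrt{\log d/n}$ (the moment hypothesis on $X$ transfers to $\wt{X}$ because $\ex{\wt{X}_{ij}^4}\lesssim\ex{X_{ij}^4}$: the recentering term satisfies $|\ex{X_{ij}1_{\{\|X_i\|_{\ell_\infty}\leq u_n\}}}|\leq D_n^qu_n^{1-q}$ and is of lower order). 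Multiplying by $\sqrt{(\log d)^3/n}$ and using $u_n^2(\log d)^2/n=\delta_{n,2}(q)$ gives $\varepsilon^{-2}\Delta^{\wt{X}}_{n,1}\sqrt{(\log d)^3/n}\lesssim\varepsilon^{-2}(\sqrt{\delta_{n,1}}+\delta_{n,2}(q))$. Combined with $\Delta^{\wt{X}}_{n,2}(\varepsilon)=0$, collecting all contributions produces the stated inequality.

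The main obstacle is calibrating $u_n$: it must be large enough that the moment tail is absorbed into $\delta_{n,2}(q)$, yet small enough that the $u_n^2\sqrt{\log d/n}$ term arising in Lemma \ref{lemma:bounded} still matches $\delta_{n,2}(q)$ after multiplication by $\sqrt{(\log d)^3/n}$. The choice $u_n\asymp D_n(n/\log d)^{1/q}$ equalizes both constraints simultaneously, and the hypothesis on $\varepsilon$ encodes exactly this scale through the requirement $2u_n\leq\sqrt{n}\varepsilon/(3\log d)$.
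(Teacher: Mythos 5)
Your proposal is correct and follows essentially the same route as the paper's proof: the same truncation level $\kappa_n=D_n(n/\log d)^{1/q}$ with truncation on the event $\{\|X_i\|_{\ell_\infty}\leq\kappa_n\}$, Nemirovski plus Markov for the truncation tail, and Lemma \ref{coupling} applied to the truncated array with $\Delta^{\wt{X}}_{n,0}$ bounded by the Chebyshev-type tail estimate, $\Delta^{\wt{X}}_{n,1}$ via Lemma \ref{lemma:bounded}, and $\Delta^{\wt{X}}_{n,2}(\varepsilon)=0$ from the lower bound on $\varepsilon$.
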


\begin{proof}
The proof is parallel to that of Lemma \ref{lemma:psi}. 
Set $\kappa_n:=D_n(n/\log d)^{1/q}$ so that
\[
\kappa_n^2\frac{(\log d)^2}{n}=(\log d)\frac{D_n^q}{\kappa_n^{q-2}}=\delta_{n,2}(q).
\]
For $i=1,\dots,n$ and $j=1,\dots,d$, define 
\[
\wt{X}_{ij}:=X_{ij}1_{\{\|X_{i}\|_{\ell_\infty}\leq\kappa_n\}}-\ex{X_{ij}1_{\{\|X_{ij}\|_{\ell_\infty}\leq\kappa_n\}}}
\] 
and set $\wt{X}:=(\wt{X}_i)_{i=1}^n$ with $\wt{X}_i=(\wt{X}_{i1},\dots,\wt{X}_{id})^\top$. 
Note that $\max_{i,j}|\wt{X}_{ij}|\leq2\kappa_n$. 
Also, we evidently have \eqref{est:trunc0} with the present notation. 
Moreover, noting $\expe{X_{ij}}=0$, we have $X_{ij}-\wt{X}_{ij}=X_{ij}1_{\{\|X_{i}\|_{\ell_\infty}>\kappa_n\}}-\expe{X_{ij}1_{\{\|X_{i}\|_{\ell_\infty}>\kappa_n\}}}$. 
Thus, Nemirovski's inequality and assumptions yield
\begin{align*}
\ex{\|S_{n}^{X-\wt{X}}\|_{\ell_\infty}^2}
\lesssim\frac{\log d}{n}\ex{\max_{1\leq j\leq d}\sum_{i=1}^nX_{ij}^21_{\{\|X_{i}\|_{\ell_\infty}>\kappa_n\}}}
\leq(\log d)\frac{D_n^q}{\kappa_n^{q-2}}=\delta_{n,2}(q).
\end{align*}
Hence the Markov inequality yield
\begin{equation}\label{est:trunc-mom}
P\lpa\|S_{n}^{X-\wt{X}}\|_{\ell_\infty}\geq\varepsilon\rpa
\lesssim\varepsilon^{-2}\delta_{n,2}(q).
\end{equation}

Next, applying Lemma \ref{coupling} to $\wt{X}$ with $\mf{C}=\mf{C}_n^X$, we obtain
\begin{multline*}
P\lpa\max_{1\leq j\leq d}(S_{n,j}^{\wt{X}}-y_j)\in A^\varepsilon\rpa
\leq P\lpa\max_{1\leq j\leq d}(Z_{n,j}^X-y_j)\in A^{6\varepsilon}\rpa\\
+C\left\{\varepsilon^{-2}\lpa\Delta^{\wt{X}}_{n,0}\log d+\Delta^{\wt{X}}_{n,1}\sqrt{\frac{(\log d)^3}{n}}\rpa
+\varepsilon^{-4}\Delta^{\wt{X}}_{n,2}(\varepsilon)\frac{(\log d)^3}{n}\right\},
\end{multline*}
where $C>0$ is a universal constant. 
Noting $\expe{X_{ij}}=0$, we have
\begin{align}
\Delta^{\wt{X}}_{n,0}&=\max_{1\leq j,k\leq d}\labs\frac{1}{n}\sum_{i=1}^n\lpa\expe{\wt{X}_{ij}\wt{X}_{ik}}-\expe{X_{ij}X_{ik}}\rpa\rabs\nonumber\\
%&=\max_{1\leq j,k\leq d}\labs\frac{1}{n}\sum_{i=1}^n\covariance\left[X_{ij}1_{\{\|X_i\|_{\ell_\infty}>\kappa_n\}},X_{ik}1_{\{\|X_i\|_{\ell_\infty}>\kappa_n\}}\right]\rabs\nonumber\\
&\leq2\max_{1\leq j\leq d}\max_{1\leq i\leq n}\ex{X_{ij}^21_{\{\|X_i\|_{\ell_\infty}>\kappa_n\}}}
\leq \frac{D_n^q}{\kappa_n^{q-2}}.\label{est:Delta0-mom}
\end{align}
Meanwhile, applying Lemma \ref{lemma:bounded} to $\wt{X}$ (note that $\expe{\wt{X}_{ij}^4}\lesssim\expe{X_{ij}^4}$), we obtain
\begin{align*}
\Delta^{\wt{X}}_{n,1}\sqrt{\frac{(\log d)^3}{n}}
\lesssim B_n\sqrt{\frac{(\log d)^3}{n}}+\kappa_n^2\frac{(\log d)^2}{n}
=\sqrt{\delta_{n,1}}+\delta_{n,2}(q).
\end{align*}
Moreover, since $\sqrt{n}\varepsilon/(3\log d)\geq2\kappa_n$ by assumption, we have $\Delta^{\wt{X}}_{n,2}(\varepsilon)=0$. 
Consequently, we obtain
\[
P\lpa\max_{1\leq j\leq d}(S_{n,j}^{\wt{X}}-y_j)\in A^\varepsilon\rpa
\leq P\lpa\max_{1\leq j\leq d}(Z_{j}^X-y_j)\in A^{6\varepsilon}\rpa
+C'\varepsilon^{-2}\lpa\sqrt{\delta_{n,1}}+\delta_{n,2}(q)\rpa,
\]
where $C'>0$ is a universal constant. Combining this with \eqref{est:trunc0} and \eqref{est:trunc-mom}, we complete the proof. 
\end{proof}

\begin{proof}[Proof of Theorem \ref{thm:main}\ref{thm:mom}]
Without loss of generality, we may assume
\begin{equation}\label{eq:wlog-mom}
\Theta_X^{2/3}\lpa\delta_{n,1}^{1/6}+\delta_{n,2}(q)^{1/3}\rpa\leq1.
\end{equation}
since otherwise the claim holds true with $K_q=1$. 
Noting $\Theta_X>0$ by Lemma \ref{lemma:theta}, we set 
\[
%\varepsilon:=6\lpa\frac{4}{\ul{\sigma}}\rpa^{1/3}\Theta_X^{-1/3}\lpa\delta_{n,1}^{1/6}+\delta_{n,2}^{1/3}\rpa.
\varepsilon:=6\Theta_X^{-1/3}\lpa\delta_{n,1}^{1/6}+\delta_{n,2}(q)^{1/3}\rpa.
\]
Then it holds that $\varepsilon\geq6D_n(\log d)^{1-1/q}/n^{1/2-1/q}.$ 
In fact, 
\begin{align*}
\frac{n^{1/2-1/q}\varepsilon}{6D_n(\log d)^{1-1/q}}
&\geq\Theta_X^{-1/3}\frac{n^{1/2-1/q}}{D_n(\log d)^{1-1/q}}\cdot\frac{D_n^{2/3}(\log d)^{2/3-2/(3q)}}{n^{1/3-2/(3q)}}\\
&=\Theta_X^{-1/3}\frac{n^{1/6-1/(3q)}}{D_n^{1/3}(\log d)^{1/3-1/(3q)}}
=\lpa\Theta_X^{2/3}\delta_{n,2}(q)^{1/3}\rpa^{-1/2},
\end{align*}
so \eqref{eq:wlog-mom} implies the desired inequality. 
Now, the remaining proof is almost the same as that of Theorem \ref{thm:main}\ref{thm:psi}, where we use Lemma \ref{lemma:mom} instead of Lemma \ref{lemma:psi}. 
\end{proof}

\subsection{Proof of Lemma \ref{lemma:factor}}

First, since $\zeta$ and $Z$ are independent, we have
\begin{align*}
\sup_{y\in\mathbb R^d}P\lpa0\leq \max_{1\leq j\leq d}(Z_j+a_j\zeta-y_j)\leq\varepsilon\rpa
&\leq\ex{\sup_{y\in\mathbb R^d}P\lpa0\leq \max_{1\leq j\leq d}(Z_j+a_j\zeta-y_j)\leq\varepsilon\mid Z\rpa}\\
&\leq\sup_{y\in\mathbb R^d}P\lpa0\leq \max_{1\leq j\leq d}(a_j\zeta-y_j)\leq\varepsilon\rpa.
\end{align*}
Next, let $\mathcal J_+:=\{j:a_j>0\}$ and $\mathcal J_-:=\{j:a_j<0\}$. Then
\begin{align*}
&\sup_{y\in\mathbb R^d}P\lpa0\leq \max_{1\leq j\leq d}(a_j\zeta-y_j)\leq\varepsilon\rpa\\
&\leq\sup_{y\in\mathbb R^d}P\lpa0\leq \max_{j\in\mathcal J_+}(a_j\zeta-y_j)\leq\varepsilon\rpa
+\sup_{y\in\mathbb R^d}P\lpa0\leq \max_{j\in\mathcal J_-}(a_j\zeta-y_j)\leq\varepsilon\rpa\\
&\leq\sup_{y\in\mathbb R^d}P\lpa0\leq \max_{j\in\mathcal J_+}(\zeta-y_j)\leq\varepsilon/\mathfrak a\rpa
+\sup_{y\in\mathbb R^d}P\lpa0\leq \max_{j\in\mathcal J_-}(y_j-\zeta)\leq\varepsilon/\mathfrak a\rpa\\
&\leq\sup_{t\in\mathbb R}P\lpa0\leq \zeta-t\leq\varepsilon/\mathfrak a\rpa
+\sup_{t\in\mathbb R}P\lpa0\leq t-\zeta\leq\varepsilon/\mathfrak a\rpa
\leq\frac{2\varepsilon}{\sqrt{2\pi}\mf{a}}.
\end{align*}
This yields the desired result.\hfill\qed

\subsection{Proof of Proposition \ref{opt-kolmogorov}}

It suffices to show that there is a sequence $(x_n)_{n=1}^\infty$ of real numbers such that
\[
%\limsup_{n\to\infty}\sup_{x\in\mathbb{R}}\left|P\lpa\max_{1\leq j\leq d}\frac{1}{\sqrt{n}}\sum_{i=1}^n(\xi_{ij}+\sigma\epsilon_i)\leq x\rpa-P\lpa\max_{1\leq j\leq d}(\zeta_j+\sigma\epsilon_i)\leq x\rpa\right|>0.
\rho:=\limsup_{n\to\infty}\left|P\lpa\max_{1\leq j\leq d_n}\frac{1}{\sqrt{n}}\sum_{i=1}^n\xi_{ij}\leq x_n\rpa-P\lpa\max_{1\leq j\leq d_n}\zeta_j\leq x_n\rpa\right|>0.
\]

The proof of this result is a slight refinement of the arguments in \cite[Remark 1]{Chen2017}. 
First, by Theorem 1 in \cite[Chapter VIII]{Petrov1975} (see also Eq.(2.41) in \cite[Chapter VIII]{Petrov1975}), if a sequence $x_n\geq0$ satisfies $x_n=O(n^{1/6})$ as $n\to\infty$, we have
\begin{equation}\label{eq:mdp}
\frac{P\lpa\frac{1}{\sqrt{n}}\sum_{i=1}^n\xi_{i1}>x_n\rpa}{1-\Phi(x_n)}=\exp\lpa\frac{\gamma}{6\sqrt{n}}x_n^3\rpa+O\lpa\frac{x_n+1}{\sqrt{n}}\rpa
\end{equation}
as $n\to\infty$, where $\Phi$ denotes the cumulative distribution function of the standard normal distribution. 

Next, for every $n$, we define $x_n\in\mathbb{R}$ as the solution of the equation $\Phi(x)^{d_n}=e^{-1}$, i.e.~$x_n:=\Phi^{-1}(e^{-1/d_n})$. Then we have $x_n-\sqrt{2\log d_n}=o(1/\sqrt{2\log d_n})$ as $n\to\infty$. 
To see this, we set
\[
b_n:=\sqrt{2\log d_n}-\frac{\log\log d_n+\log4\pi}{2\sqrt{2\log d_n}}.
\]
Then, it is well known (e.g.~\cite[Eq.(3.40)]{EKM1997}) that $P(\sqrt{2\log d_n}(\max_{1\leq j\leq d_n}\zeta_j-x_n)\leq t)\to\Lambda(t)$ as $n\to\infty$ for every $t\in\mathbb{R}$, where $\Lambda(t):=\exp(-e^{-t})$. Moreover, since $\Lambda$ is continuous, by \cite[Lemma 2.11]{Vaart1998} we indeed have 
\[
\lim_{n\to\infty}\sup_{t\in\mathbb{R}}\labs P\lpa\sqrt{2\log d_n}\lpa\max_{1\leq j\leq d_n}\zeta_j-b_n\rpa\leq t\rpa-\Lambda(t)\rabs=0.
\]
Since $\Phi(x_n)^{d_n}=P(\sqrt{2\log d_n}(\max_{1\leq j\leq d_n}\zeta_j-b_n)\leq \sqrt{2\log d_n}(x_n-b_n))$, we obtain $\Lambda(\sqrt{2\log d_n}(x_n-b_n)))\to e^{-1}$ as $n\to\infty$. Since $\Lambda^{-1}(e^{-1})=0$, this implies the desired result. 

Now, since $\lim_{n\to\infty}x_n/\sqrt{2\log d_n}=1$, we have $x_n/n^{1/6}\to\sqrt{2}c^{1/6}$ as $n\to\infty$ by assumption. Hence, \eqref{eq:mdp} yields
\[
\frac{P\lpa\frac{1}{\sqrt{n}}\sum_{i=1}^n\xi_{i1}>x_n\rpa}{1-\Phi(x_n)}\to\exp\lpa\frac{\gamma\sqrt{2c}}{3}\rpa
\]
as $n\to\infty$. 
Since $\gamma<0$ by assumption, there is a constant $a\in(0,1)$ such that
\[
\frac{P\lpa\frac{1}{\sqrt{n}}\sum_{i=1}^n\xi_{i1}>x_n\rpa}{1-\Phi(x_n)}\leq a
\]
for sufficiently large $n$. 
For such an $n$, we obtain
\begin{align*}
P\lpa\frac{1}{\sqrt{n}}\sum_{i=1}^n\xi_{i1}\leq x_n\rpa
%&=1-P\lpa\frac{1}{\sqrt{n}}\sum_{i=1}^n\xi_{i1}>x_n\rpa\\
&\geq1-a(1-\Phi(x_n))
=\Phi(x_n)+\lpa1-a\rpa(1-\Phi(x_n)).
\end{align*}
Now we infer that
\begin{align*}
\rho
&=\limsup_{n\to\infty}\left|P\lpa\frac{1}{\sqrt{n}}\sum_{i=1}^n\xi_{i1}\leq x_n\rpa^{d_n}-\Phi(x_n)^{d_n}\right|\\
&\geq\limsup_{n\to\infty}\Phi(x_n)^{d_n}\left\{(1+\lpa1-a\rpa(1-\Phi(x_n))/\Phi(x_n))^{d_n}-1\right\}\\
&\geq\limsup_{n\to\infty}\Phi(x_n)^{d_n}\cdot d_n\lpa1-a\rpa(1-\Phi(x_n))/\Phi(x_n),
\end{align*}
where the last inequality follows from the inequality $(1+t)^{d_n}\geq 1+d_nt$ holding for all $t\geq0$. Since $\Phi$ is bounded by 1, we obtain
\begin{align*}
\rho
&\geq\limsup_{n\to\infty}\Phi(x_n)^{d_n}\cdot d_n\lpa1-a\rpa(1-\Phi(x_n))
=\frac{1-a}{e}\limsup_{n\to\infty}d_n(1-\Phi(x_n)).
\end{align*}
Since $d_n(1-\Phi(x_n))=-(e^{-1/d_n}-1)/(1/d_n)\to1$ as $n\to\infty$, we conclude $\rho\geq(1-a)/e>0.$\hfill\qed

\section{Proofs for Section \ref{sec:boot}}\label{sec:proof3}

Throughout this section, we use the following notation: 
We set $Y=(Y_i)_{i=1}^n:=(X_i-\bar{X})_{i=1}^n$. 
Given a sequence $\xi=(\xi_i)_{i=1}^n$ of random vectors, we set $w\xi:=(w_i\xi_i)_{i=1}^n$. 
Note that we have $S_n^{\mathrm{WB}}=S_n^{wY}$. 

%\begin{lemma}\label{lemma:xbar}
%There is a universal constant $C>0$ such that $\expe{\|\bar{Y}\|^4}\leq C(\Delta_{n,1}^Y(\log d)/n)^2$ for any sequence $Y=(Y_i)_{i=1}^n$ of independent centered $d$-dimensional random vectors.
%\end{lemma}

%\begin{proof}
%This is a straightforward consequence of the Nemirovski and Schwarz inequalities. 
%%\begin{align*}
%%\expe{\|\bar{Y}\|^4}
%%\lesssim\frac{(\log d)^2}{n^4}\ex{\lpa\max_{1\leq j\leq d}\sum_{i=1}^nY_{ij}^2\rpa^2}
%%\leq\frac{(\log d)^2}{n^3}\ex{\max_{1\leq j\leq d}\sum_{i=1}^nY_{ij}^4}
%%=\lpa\frac{\log d}{n}\Delta_{n,1}^Y\rpa^2
%%\end{align*}
%\end{proof}

\subsection{Proof of Theorem \ref{thm:wild}\ref{thm:wild-psi}}

We may assume $\Theta_X^{2/3}\lpa(b^2\delta_{n,1})^{1/6}+(b^2\delta_{n,2})^{1/3}\rpa\leq1$ without loss of generality. Set 
\[
\varepsilon:=24\Theta_X^{-1/3}\lpa(b^2\delta_{n,1})^{1/6}+(b^2\delta_{n,2})^{1/3}\rpa
\]
and $\kappa_n:=2B_n\log n$. As in the proof of Theorem \ref{thm:main}\ref{thm:psi}, we can prove $\ve\geq3b\cdot4\kappa_n(\log d)/\sqrt{n}$. 
Now, we define $\wt{X}=(\wt{X}_i)_{i=1}^n$ as in the proof of Lemma \ref{lemma:psi}. 
Then we set $\wt{Y}=(\wt{Y}_i)_{i=1}^n:=(\wt{X}_i-\bar{\wt{X}})_{i=1}^n$. 
Note that $\max_{i,j}|\wt{Y}_{ij}|\leq4\kappa_n$, so we have $\max_{i,j}|w_i\wt{Y}_{ij}|\leq b\cdot4\kappa_n$. 
We apply Lemma \ref{coupling} to $w\wt{Y}^\diamond$ with $\mf{C}=\expe{S_n^{X^\diamond}(S_n^{X^\diamond})^\top}$, conditionally on $X$. Then, we conclude that there is an event $\Omega_0\in\mcl{F}$ such that $P(\Omega_0)=1$ and 
\begin{multline*}
P\lpa\max_{1\leq j\leq 2d}\lpa S_{n,j}^{w\wt{Y}^\diamond}-y_j\rpa\in A\mid X\rpa
\leq P\lpa\max_{1\leq j\leq 2d}(Z_{n,j}^{X^\diamond}-y_j)\in A^{5\varepsilon}\rpa\\
+C\varepsilon^{-2}\lpa\Delta^*_{n,0}\log d+\Delta^*_{n,1}\sqrt{\frac{(\log d)^3}{n}}\rpa
\quad\text{on }\Omega_0
\end{multline*}
for any $y\in\mathbb{R}^{2d}$ and $A\in\mcl{B}(\mathbb{R})$, where $C>0$ is a universal constant and
\begin{align*}
\Delta^*_{n,0}&:=\ex{\max_{1\leq j,k\leq d}\labs\frac{1}{n}\sum_{i=1}^n\lpa w_i^2\wt{Y}_{ij}\wt{Y}_{ik}-\expe{X_{ij}X_{ij}}\rpa\rabs\mid X},\quad
\Delta^*_{n,1}:=\sqrt{\frac{1}{n}\ex{\max_{1\leq j\leq d}\sum_{i=1}^nw_i^4\wt{Y}_{ij}^4\mid X}}.
%\Delta^*_{n,2}(\varepsilon)&:=\frac{1}{n}\sum_{i=1}^n\ex{\|w_i\wt{Y}_{i}\|_{\ell_\infty}^4;\|w_i\wt{Y}_{i}\|_{\ell_\infty}>\sqrt{n}\varepsilon/(3\log d)\mid X}.
\end{align*}
Thus we obtain
\begin{multline}\label{coupling-boot}
P\lpa\max_{1\leq j\leq 2d}\lpa \lpa S_{n,j}^{\mathrm{WB}}\rpa^\diamond-y_j\rpa\in A\mid X\rpa
\leq P\lpa\max_{1\leq j\leq 2d}(Z_{n,j}^{X^\diamond}-y_j)\in A^{6\varepsilon}\rpa\\
+P\lpa\|S_n^{w(Y^\diamond-\wt{Y}^\diamond)}\|_{\ell_\infty}>\varepsilon\mid X\rpa
+C\varepsilon^{-2}\lpa\Delta^*_{n,0}\log d+\Delta^*_{n,1}\sqrt{\frac{(\log d)^3}{n}}\rpa
\quad\text{on }\Omega_0
\end{multline}
for any $y\in\mathbb{R}^{2d}$ and $A\in\mcl{B}(\mathbb{R})$. 
Now, noting that $w_i$'s are bounded by $b$ and $\sqrt{n}(\bar{X}-\bar{\wt{X}})=S_n^{X-\wt{X}}$, the same argument as in the proof of \eqref{est:trunc} yields
\[
P\lpa\|S_n^{w(Y^\diamond-\wt{Y}^\diamond)}\|_{\ell_\infty}>\varepsilon\rpa\lesssim \varepsilon^{-2}\frac{b^2B_n^2(\log d)^2(\log n)^2}{n}.
\]
Meanwhile, Lemmas \ref{lemma:nemirovski} and \ref{lemma:bounded} and the inequality $\expe{w_i^4}\leq b^2\expe{w_i^2}=b^2$ imply that
\begin{align*}
\ex{\max_{1\leq j,k\leq d}\labs\frac{1}{n}\sum_{i=1}^n\lpa w_i^2\wt{Y}_{ij}\wt{Y}_{ik}-\ex{w_i^2\wt{X}_{ij}\wt{X}_{ij}}\rpa\rabs}
&\lesssim \sqrt{\frac{\log d}{n}}\Delta_{n,1}^{w\wt{X}}
+\ex{\|\bar{\wt{X}}\|_{\ell_\infty}^2}\ex{\left|\frac{1}{n}\sum_{i=1}^nw_i^2\right|}\\
&\lesssim \sqrt{\frac{b^2B_n^2\log d}{n}}+\frac{b^2\kappa_n^2\log d}{n}
+\ex{\|\bar{\wt{X}}\|_{\ell_\infty}^2}.
\end{align*}
Also, $\ex{\|\bar{\wt{X}}\|_{\ell_\infty}^2}\lesssim\kappa_n^2(\log d)/n$ by Lemma 14.14 in \cite{BvdG2011}. 
Then, since $\expe{w_i^2\wt{X}_{ij}\wt{X}_{ij}}=\expe{\wt{X}_{ij}\wt{X}_{ij}}$ and $b\geq1$, the above inequalities and \eqref{est:Delta0} yield
\[
\expe{\Delta^*_{n,0}}\lesssim \sqrt{\frac{b^2B_n^2\log d}{n}}+\frac{b^2B_n^2(\log d)(\log n)^2}{n}.
\]
Moreover, since the Jensen inequality yields $\bar{\wt{X}}_j^4\leq n^{-1}\sum_{i=1}^n\wt{X}_{ij}^4$, we have $\expe{\Delta_{n,1}^*}\lesssim\Delta_{n,1}^{w\wt{X}}+b\Delta_{n,1}^{\wt{X}}$ by the Lyapunov inequality. Thus, Lemma \ref{lemma:bounded} implies that
\[
\expe{\Delta^*_{n,1}}\sqrt{\frac{(\log d)^3}{n}}\lesssim \sqrt{\frac{b^2B_n^2\log d}{n}}+\frac{b^2B_n^2(\log d)(\log n)^2}{n}.
\]
%Finally, since $\|w_i\wt{Y}_{i}\|_{\ell_\infty}\leq b\cdot 4\kappa_n\leq\sqrt{n}\ve/(3\log d)$, we have $\Delta_{n,2}^*(\ve)=0$. 
Combining these estimates with Lemma \ref{cck-kolmogorov}, we obtain
\[
\ex{\rho_n^{\mathrm{WB}}(\mcl{A}^{\mathrm{re}}(d))}
\leq2\mcl{C}_{Z_n^X}(6\varepsilon)+C_1\varepsilon^{-2}\lpa\sqrt{b^2\delta_{n,1}}+b^2\delta_{n,2}\rpa,
\]
where $C_1>0$ is a universal constant. Since $\mcl{C}_{Z_n^X}(6\varepsilon)\leq6\Theta_X\ve$ by definition, we obtain the desired result by the definition of $\ve$.\hfill\qed

\subsection{Proof of Theorem \ref{thm:wild}\ref{thm:wild-mom}}

The proof is completely parallel to that of Theorem \ref{thm:wild}\ref{thm:wild-psi}, where we suitably modify the definitions of $\ve,\kappa_n,\wt{X}$ and consider \eqref{est:trunc-mom}--\eqref{est:Delta0-mom} instead of \eqref{est:trunc}--\eqref{est:Delta0}, respectively. 
The detail is omitted. \hfill\qed

\if0
\subsection{Proof of Theorem \ref{thm:efron}}

Let us focus on the first statement. As in the proof of Theorem \ref{thm:main}\ref{thm:psi}, we may assume \eqref{eq:wlog} without loss of generality. 
We define $\kappa_n$, $\wt{X}=(\wt{X}_i)_{i=1}^n$ and $\wt{Y}=(\wt{Y}_i)_{i=1}^n:=(\wt{X}_i-\bar{\wt{X}})_{i=1}^n$ in the same way as in the proof of Theorem \ref{thm:wild}\ref{thm:wild-psi}. 
We also set $\varepsilon:=6\Theta_X^{-1/3}\lpa(\delta_{n,1}^{1/6}+\delta_{n,2}^{1/3}\rpa.$ 
Now, we define $\wt{X}^*=(\wt{X}^*_i)_{i=1}^n$ in the same way as $\wt{X}$ with replacing $X$ by $X^*$. Then we set $\wt{Y}^*=(\wt{Y}^*_i)_{i=1}^n:=(\wt{X}^*_i-\bar{\wt{X}})_{i=1}^n$. Note that $\wt{Y}^*_1,\dots,\wt{Y}^*_n$ are centered i.i.d.~random vectors, conditionally on $X$. 
Thus, conditionally on $X$, we may apply Lemma \ref{coupling} to $(\wt{Y}^*)^\diamond$ with $\mf{C}=\expe{S_n^{X^\diamond}(S_n^{X^\diamond})^\top}$. Then, we conclude that there is an event $\Omega_0\in\mcl{F}$ such that $P(\Omega_0)=1$ and 
\begin{multline*}
P\lpa\max_{1\leq j\leq 2d}\lpa S_{n,j}^{(\wt{Y}^*)^\diamond}-y_j\rpa\in A\mid X\rpa
\leq P\lpa\max_{1\leq j\leq 2d}(Z_{n,j}^{X^\diamond}-y_j)\in A^{5\varepsilon}\rpa\\
+C\left\{\varepsilon^{-2}\lpa\Delta^*_{n,0}\log d+\Delta^*_{n,1}\sqrt{\frac{(\log d)^3}{n}}\rpa
+\varepsilon^{-4}\Delta^{*}_{n,2}(\varepsilon)\frac{(\log d)^3}{n}\right\}\quad\text{on }\Omega_0
\end{multline*}
for any $y\in\mathbb{R}^{2d}$ and $A\in\mcl{B}(\mathbb{R})$, where $C>0$ is a universal constant and
\begin{align*}
\Delta^*_{n,0}&:=\max_{1\leq j,k\leq d}\labs\frac{1}{n}\sum_{i=1}^n\lpa \wt{Y}_{ij}\wt{Y}_{ik}-\expe{X_{ij}X_{ij}}\rpa\rabs,\qquad
\Delta^*_{n,1}:=\sqrt{\frac{1}{n}\sum_{i=1}^n\max_{1\leq j\leq d}\wt{Y}_{ij}^4},\\
\Delta^*_{n,2}(\varepsilon)&:=\frac{1}{n}\sum_{i=1}^n\|\wt{Y}_{i}\|_{\ell_\infty}^41_{\{\|\wt{Y}_{i}\|_{\ell_\infty}>\sqrt{n}\varepsilon/(3\log d)\}}.
\end{align*}
Now, the remainder of the proof is completely parallel to that of Theorem \ref{thm:wild}\ref{thm:wild-psi}. 

The proof of the second statement is analogous to the above, so we omit the details.
\hfill\qed
\fi

\appendix
\section{Appendix: Proofs for Proposition \ref{s-coupling} and Corollary \ref{weak-GA} via the standard Lindeberg method}\label{sec:appendix}

The following lemma is a counterpart of Lemma \ref{lemma:lindeberg} but based on the non-randomized Lindeberg method. 
\begin{lemma}\label{lemma:s-lindeberg}
Under the assumptions of Lemma \ref{lemma:lindeberg}, we have
\begin{align*}
\rho_{h,\beta}(S_n^X,S_n^Y)
\leq C'_mn^{-\frac{m}{2}}\lpa\max_{1\leq l\leq m}\beta^{m-l}\|h^{(l)}\|_\infty\rpa\left\{
\sum_{i=1}^n\expe{\|X_{i}\|_{\ell_\infty}^m}
+\sum_{i=1}^n\expe{\|Y_{i}\|_{\ell_\infty}^m}
\right\},
\end{align*}
where $C'_m>0$ depends only on $m$. 
\end{lemma}

\begin{proof}
We may assume that $X$ and $Y$ are independent without loss of generality. 
Take a vector $y\in\mathbb{R}^d$ and define the function $\Psi:\mathbb{R}^d\to\mathbb{R}$ by $\Psi(x)=h(\Phi_\beta(x-y))$ for $x\in\mathbb{R}^d$. 
For every $k\in[n]$, we set 
\[
S_n(k):=\frac{1}{\sqrt{n}}\sum_{i=1}^kX_{i}+\frac{1}{\sqrt{n}}\sum_{i=k+1}^nY_{i}\quad
\text{and}
\quad
\wh{S}_n(k):=S_n(k)-\frac{1}{\sqrt{n}}X_{k}.
\]
By construction $\wh{S}_n(k)$ is independent of $X_{k}$ and $Y_{k}$. 
We also have $S_n(k)=\wh{S}_n(k)+n^{-1/2}X_{k}$ and $S_n(k-1)=\wh{S}_n(k)+n^{-1/2}Y_{k}$ (with $S_n(0):=n^{-1/2}\sum_{i=1}^nY_{i}$). 
Moreover, it holds that $S_n(n)=S_n^X$ and $S_n(0)=S_n^Y$. 
Therefore, we have
\begin{align}
\left|\ex{\Psi(S_n^X)}-\ex{\Psi(S_n^Y)}\right|
&\leq\sum_{k=1}^n\left|\ex{\Psi(S_n(k))}-\ex{\Psi(S_n(k-1))}\right|.\label{s-lindeberg-eq1}
\end{align}
Meanwhile, when $W=X$ or $W=Y$, Taylor's theorem and the independence of $W_{k}$ from $\wh{S}_n(k)$ yield
\[
\ex{\Psi\lpa\wh{S}_n(k)+n^{-1/2}W_{k}\rpa}
=\sum_{\lambda\in\mathbb{Z}_+^d:|\lambda|\leq m-1}\frac{n^{-|\lambda|/2}}{\lambda!}\ex{\partial^\lambda\Psi\lpa\wh{S}_n(k)\rpa}\ex{W_{k}^{\lambda}}
+R_k[W],
\]
where
\[
R_k[W]:=n^{-m/2}\sum_{\lambda\in\mathbb{Z}_+^d:|\lambda|=m}\frac{m}{\lambda!}\int_0^1(1-t)^{m-1}\ex{\partial^\lambda\Psi\lpa\wh{S}_n(k)+tn^{-1/2}W_{k}\rpa W_{k}^\lambda}dt.
\]
Since we have $\expe{X_{i}^\lambda}=\expe{Y_{i}^\lambda}$ for all $i\in[N]$ and $\lambda\in\mathbb{Z}_+^d$ with $|\lambda|\leq m-1$ by assumption, we obtain
\begin{align}
\labs\ex{\Psi\lpa S_n(k)\rpa}-\ex{\Psi\lpa S(k-1)\rpa}\rabs
\leq|R_k[X]|+|R_k[Y]|.\label{s-lindeberg-eq2}
\end{align}
Now, for any random vector $W$ in $\mathbb{R}^d$ we have by Lemma \ref{cck-derivative}
\begin{align*}
|R_k[W]|\leq c'_mn^{-\frac{m}{2}}\max_{1\leq l\leq m}\beta^{m-l}\|h^{(l)}\|_\infty\sum_{i=1}^n\expe{\|W_{i}\|_{\ell_\infty}^m},
\end{align*}
where $c'_m$ depends only on $m$. Combining this estimate with \eqref{s-lindeberg-eq1} and \eqref{s-lindeberg-eq2}, we obtain the desired result. 
\end{proof}

\begin{proof}[Proof of Proposition \ref{s-coupling}]
Without loss of generality, we may assume 
\begin{equation}\label{eq:s-trivial0}
\varepsilon^{-2}\sqrt{\frac{B_n^4(\log d)^3}{n}}\leq1, 
\end{equation}
since otherwise the claim obviously holds true with $C=1$. 

Set $\beta=\varepsilon^{-1}\log d$ (hence $\beta^{-1}\log d=\varepsilon$). By \eqref{max-smooth} we have
\[
P\lpa\max_{1\leq j\leq d}(S_{n,j}^X-y_j)\in A\rpa\leq P(\Phi_\beta(S_n^X-y)\in A^{\varepsilon})=\expe{1_{A^{\varepsilon}}(\Phi_\beta(S_n^X-y))}.
\]
Next, by Lemma \ref{cck-approx} there is a $C^\infty$ function $h:\mathbb{R}\to\mathbb{R}$ and a universal constant $K>0$ such that $\|h^{(r)}\|_\infty\leq K\varepsilon^{-r}$ for $r=1,2,3,4$ and $1_{A^{\varepsilon}}(x)\leq h(x)\leq1_{A^{4\varepsilon}}(x)$ for all $x\in\mathbb{R}$. Then we have $\expe{1_{A^{\varepsilon}}(\Phi_\beta(S_{n}^X-y))}\leq \expe{h(\Phi_\beta(S_{n}^X-y))}.$ 
Now, let us define $Y=(Y_i)_{i=1}^n$ as in Proposition \ref{prop:beta}. Then we have
\[
\rho_{h,\beta}(S_n^X,S_n^Y)\lesssim\varepsilon^{-4}\frac{B_n^4(\log d)^3}{n}
\]
by Lemma \ref{lemma:s-lindeberg}. Combining this with Proposition \ref{prop:beta}, we obtain
\begin{align*}
\rho_{h,\beta}(S_n^X,Z)
&\leq \rho_{h,\beta}(S_n^X,S_n^Y)+\rho_{h,\beta}(S_n^Y,Z)\\
&\lesssim \varepsilon^{-2}(\log d)\lpa\Delta^X_{n,0}+\Delta^X_{n,1}\sqrt{\frac{\log d}{n}}\rpa
+\varepsilon^{-4}\frac{B_n^4(\log d)^3}{n}\\
&\leq \varepsilon^{-2}(\log d)\Delta^X_{n,0}+\varepsilon^{-2}B_n^2\sqrt{\frac{(\log d)^3}{n}}
+\varepsilon^{-4}\frac{B_n^4(\log d)^3}{n}\\
&\leq \varepsilon^{-2}(\log d)\Delta^X_{n,0}+2\varepsilon^{-2}\sqrt{\frac{B_n^4(\log d)^3}{n}},
\end{align*}
where the last inequality follows from \eqref{eq:s-trivial0}. 
Meanwhile, we also have 
\[
\expe{h(\Phi_\beta(Z-y))}
\leq \expe{1_{A^{4\varepsilon}}(\Phi_\beta(Z-y))}
\leq \ex{1_{A^{5\varepsilon}}\lpa\max_{1\leq j\leq d}(Z_j-y_j)\rpa}=P\lpa\max_{1\leq j\leq d}(Z_j-y_j)\in A^{5\varepsilon}\rpa.
\]
Consequently, we complete the proof.
\end{proof}

\begin{proof}[Proof of Corollary \ref{weak-GA}]
Set $\ve:=\Theta_X^{-1/3}(n^{-1}\log^3d)^{1/6}$. 
Applying Proposition \ref{s-coupling} to $X^\diamond$, we obtain
\begin{align*}
P\lpa\max_{1\leq j\leq 2d}(S_{n,j}^{X^\diamond}-y_j)\in A\rpa
\leq P\lpa\max_{1\leq j\leq 2d}(Z_{j}^{X^\diamond}-y_j)\in A^{6\varepsilon}\rpa
+C_0\varepsilon^{-2}\sqrt{\frac{B_n^4\log^3d}{n}},
\end{align*}
where $C_0>0$ is a universal constant. 
Since $S_n^{X^\diamond}=(S_n^X)^\diamond$, we obtain by Lemma \ref{cck-kolmogorov} 
\begin{align*}
\rho_n(\mcl{A}^{\mathrm{re}})
&\leq2\mcl{C}_{Z_n^X}(6\varepsilon)+C_0\varepsilon^{-2}\sqrt{\frac{\log^3d}{n}}
\lesssim\Theta_X^{2/3}\lpa\frac{B_n^4\log^3d}{n}\rpa^{1/6}.
\end{align*}
This proves the first inequality in Corollary \ref{weak-GA}. The second one follows from Lemma \ref{lemma:nazarov}. 
\end{proof}

\section*{Acknowledgements}

%\addcontentsline{toc}{section}{Acknowledgements}
The author thanks Denis Chetverikov and Arun Kumar Kuchibhotla for their valuable comments. 
The author is also grateful to two anonymous referees for their constructive comments which have significantly improved a former version of this paper.  
This work was supported by JST CREST Grant Number JPMJCR14D7 and JSPS KAKENHI Grant Numbers JP17H01100, JP18H00836, JP19K13668.

\section*{Conflict of Interest}

The author declares that there is no conflict of interest.

% \section{References}
{\small
\renewcommand*{\baselinestretch}{1}\selectfont
\addcontentsline{toc}{section}{References}
%\bibliography{base}

\begin{thebibliography}{31}
\providecommand{\natexlab}[1]{#1}
\providecommand{\url}[1]{{#1}}
\providecommand{\urlprefix}{URL }
\expandafter\ifx\csname urlstyle\endcsname\relax
  \providecommand{\doi}[1]{DOI~\discretionary{}{}{}#1}\else
  \providecommand{\doi}{DOI~\discretionary{}{}{}\begingroup
  \urlstyle{rm}\Url}\fi
\providecommand{\eprint}[2][]{\url{#2}}

\bibitem[{Belloni and Oliveira(2018)}]{BO2018}
Belloni A, Oliveira RI (2018) A high dimensional central limit theorem for
  martingales, with applications to context tree models, working paper.
  arXiv:1809.02741

\bibitem[{Belloni et~al.(2015)Belloni, Chernozhukov, and Kato}]{BCK2015}
Belloni A, Chernozhukov V, Kato K (2015) Uniform post-selection inference for
  least absolute deviation regression and other {Z}-estimation problems.
  Biometrika 102(1):77--94

\bibitem[{Belloni et~al.(2018)Belloni, Chernozhukov, Chetverikov, Hansen, and
  Kato}]{BCCHK2018}
Belloni A, Chernozhukov V, Chetverikov D, Hansen C, Kato K (2018)
  High-dimensional econometrics and regularized {GMM}, working paper. arXiv:
  1806.01888

\bibitem[{Bentkus(2005)}]{Bentkus2005}
Bentkus V (2005) A {L}yapunov-type bound in $\mathbf{R}^d$. Theory Probab Appl
  49(2):311--323

\bibitem[{Bonis(2020)}]{Bonis2019}
Bonis T (2020) {S}tein's method for normal approximation in {W}asserstein
  distances with application to the multivariate {C}entral {L}imit {T}heorem.
  Probab Theory Related Fields (forthcoming) Available at
  \url{https://arxiv.org/abs/1905.13615}

\bibitem[{Boucheron et~al.(2013)Boucheron, Lugosi, and Massart}]{BLM2013}
Boucheron S, Lugosi G, Massart P (2013) Concentration inequalities: A
  nonasymptotic theory of independence. Oxford University Press

\bibitem[{B{\"u}hlmann and {v}an~de Geer(2011)}]{BvdG2011}
B{\"u}hlmann P, {v}an~de Geer S (2011) Statistics for High-Dimensional Data.
  Springer

\bibitem[{Chen(2018)}]{Chen2017}
Chen X (2018) Gaussian and bootstrap approximations for high-dimensional
  {U}-statistics and their applications. Ann Statist 46(2):642--678

\bibitem[{Chernozhukov et~al.(2013)Chernozhukov, Chetverikov, and
  Kato}]{CCK2013}
Chernozhukov V, Chetverikov D, Kato K (2013) Gaussian approximations and
  multiplier bootstrap for maxima of sums of high-dimensional random vectors.
  Ann Statist 41(6):2786--2819

\bibitem[{Chernozhukov et~al.(2014)Chernozhukov, Chetverikov, and
  Kato}]{CCK2014}
Chernozhukov V, Chetverikov D, Kato K (2014) Gaussian approximation of suprema
  of empirical processes. Ann Statist 42(4):1564--1597

\bibitem[{Chernozhukov et~al.(2015)Chernozhukov, Chetverikov, and
  Kato}]{CCK2015}
Chernozhukov V, Chetverikov D, Kato K (2015) Comparison and anti-concentration
  bounds for maxima of {G}aussian random vectors. Probab Theory Related Fields
  162:47--70

\bibitem[{Chernozhukov et~al.(2016)Chernozhukov, Chetverikov, and
  Kato}]{CCK2016}
Chernozhukov V, Chetverikov D, Kato K (2016) Empirical and multiplier
  bootstraps for suprema of empirical processes of increasing complexity, and
  related {G}aussian couplings. Stochastic Process Appl 126:3632--3651

\bibitem[{Chernozhukov et~al.(2017{\natexlab{a}})Chernozhukov, Chetverikov, and
  Kato}]{CCK2017}
Chernozhukov V, Chetverikov D, Kato K (2017{\natexlab{a}}) Central limit
  theorems and bootstrap in high dimensions. Ann Probab 45(4):2309--2353

\bibitem[{Chernozhukov et~al.(2017{\natexlab{b}})Chernozhukov, Chetverikov, and
  Kato}]{CCK2017nazarov}
Chernozhukov V, Chetverikov D, Kato K (2017{\natexlab{b}}) Detailed proof of
  {N}azarov's inequality, unpublished paper. arXiv: 1711.10696

\bibitem[{Courtade et~al.(2019)Courtade, Fathi, and Pananjady}]{CFP2017}
Courtade TA, Fathi M, Pananjady A (2019) Existence of {S}tein kernels under a
  spectral gap, and discrepancy bounds. Ann Inst Henri Poincar\'{e} Probab Stat
  55(2):777--790

\bibitem[{Deng and Zhang(2020)}]{DZ2020}
Deng H, Zhang CH (2020) Beyond {G}aussian approximation: Bootstrap for maxima
  of sums of independent random vectors. Ann Statist (forthcoming) Available at
  arXiv: \url{https://arxiv.org/abs/1705.09528v2}

\bibitem[{Eldan et~al.(2020)Eldan, Mikulincer, and Zhai}]{EMZ2020}
Eldan R, Mikulincer D, Zhai A (2020) The {CLT} in high dimensions: quantitative
  bounds via martingale embedding. Ann Probab 48(5):2494--2524

\bibitem[{Embrechts et~al.(1997)Embrechts, Kl{\"u}ppelberg, and
  Mikosch}]{EKM1997}
Embrechts P, Kl{\"u}ppelberg C, Mikosch T (1997) Modelling extremal events.
  Springer

\bibitem[{Fathi(2019)}]{Fathi2018}
Fathi M (2019) Stein kernels and moment maps. Ann Probab 47(4):2172--2185

\bibitem[{Gradshteyn and Ryzhik(2007)}]{GR2007}
Gradshteyn I, Ryzhik I (2007) Table of Integrals, Series, and Products, 7th
  edn. Elsevier

\bibitem[{Hall(2006)}]{Hall2006}
Hall P (2006) Some contemporary problems in statistical science. In: Chamizo F,
  Quir{\'o}s A (eds) Madrid Intelligencer, Springer, pp 38--41

\bibitem[{Koike(2019{\natexlab{a}})}]{Koike2017stein}
Koike Y (2019{\natexlab{a}}) Gaussian approximation of maxima of {W}iener
  functionals and its application to high-frequency data. Ann Statist
  47(3):1663--1687

\bibitem[{Koike(2019{\natexlab{b}})}]{Koike2019dejong}
Koike Y (2019{\natexlab{b}}) High-dimensional central limit theorems for
  homogeneous sums, working paper. arXiv:1902.03809

\bibitem[{Kuchibhotla et~al.(2019)Kuchibhotla, Mukherjee, and
  Banerjee}]{KMB2019}
Kuchibhotla AK, Mukherjee S, Banerjee D (2019) High-dimensional {CLT}:
  Improvements, non-uniform extensions and large deviations. Bernoulli
  (forthcoming) Available at arXiv: \url{https://arxiv.org/abs/1806.06153v3}

\bibitem[{Le~Cam(1986)}]{LeCam1986}
Le~Cam L (1986) Asymptotic Methods in Statistical Decision Theory. Springer

\bibitem[{Ley et~al.(2017)Ley, Reinert, and Swan}]{LRS2017}
Ley C, Reinert G, Swan Y (2017) Stein's method for comparison of univariate
  distributions. Probab Surv 14:1--52

\bibitem[{Lopes et~al.(2020)Lopes, Lin, and M{\"u}ller}]{LLM2018}
Lopes ME, Lin Z, M{\"u}ller HG (2020) Bootstrapping max statistics in high
  dimensions: Near-parametric rates under weak variance decay and application
  to functional and multinomial data. Ann Statist 48(2):1214--1229

\bibitem[{Petrov(1975)}]{Petrov1975}
Petrov VV (1975) Sums of Independent Random Variables. Springer

\bibitem[{{v}an~der Vaart(1998)}]{Vaart1998}
{v}an~der Vaart AW (1998) Asymptotic statistics. Cambridge University Press

\bibitem[{{v}an~der Vaart and Wellner(1996)}]{VW1996}
{v}an~der Vaart AW, Wellner JA (1996) Weak Convergence and Empirical Processes.
  Springer

\bibitem[{Zhai(2018)}]{Zhai2018}
Zhai A (2018) A high-dimensional {CLT} in {$\mathcal{W}_2$} distance with near
  optimal convergence rate. Probab Theory Related Fields 170(3--4):821--845

\end{thebibliography}

}

\end{document}